\theoremstyle{thmstyleone}%
\newtheorem{theorem}{Theorem}
\newtheorem{corollary}{Corollary}
\newtheorem{lemma}{Lemma}
\theoremstyle{thmstyletwo}%
\newtheorem{remark}{Remark}%
\theoremstyle{thmstylethree}%
\newtheorem{assumption}{Assumption}%
\newcommand{\order}[1]{\mathcal{O}\left(#1\right)}
\newcommand{\torder}[1]{\tilde{\mathcal{O}}\left(#1\right)}
\newcommand{\torderi}[1]{\tilde{\mathcal{O}}(#1)}
\newcommand{\prt}[1]{\left(#1\right)}
\newcommand{\brk}[1]{\left[#1\right]}
\newcommand{\crk}[1]{\left\{#1\right\}}
\newcommand{\orderi}[1]{\mathcal{O}(#1)}
\newcommand{\prti}[1]{(#1)}
\newcommand{\brki}[1]{[#1]}
\newcommand{\crki}[1]{\{#1\}}
\newcommand{\inpro}[1]{\left\langle #1 \right\rangle}
\newcommand{\inproi}[1]{\langle #1 \rangle}
\newcommand{\condE}[2]{\E\brk{#1\middle|#2}}
\newcommand{\condEi}[2]{\E[#1|#2]}
\newcommand{\norm}[1]{\left\Vert #1 \right\Vert}
\newcommand{\normi}[1]{\Vert #1 \Vert}
\newcommand{\R}{\mathbb{R}}
\newcommand{\E}{\mathbb{E}}
\newcommand{\x}{\mathbf{x}}
\newcommand{\z}{\mathbf{z}}
\newcommand{\g}{\mathbf{g}}
\newcommand{\1}{\mathbf{1}}
\newcommand{\T}{\intercal}
\newcommand{\sumn}{\sum_{i=1}^n}
\newcommand{\cN}{\mathcal{N}}
\newcommand{\y}{\mathbf{y}}
\newcommand{\cF}{\mathcal{F}}
\newcommand{\cL}{\mathcal{L}}
\newcommand{\0}{\mathbf{0}}
\newcommand{\Barx}[1]{\1\bar{x}^{\T}_{#1}}
\newcommand{\tpi}{\tilde{\Pi}}
\newcommand{\tpx}{\tpi\tx}
\newcommand{\tpy}{\tpi\ty}
\newcommand{\tW}{\tilde{W}}
\newcommand{\zsh}[1]{(\z_{#1})_{\#}}
\newcommand{\ysh}[1]{(\y_{#1})_{\#}}
\newcommand{\uf}{f^*}
\newcommand{\trw}{\tilde{\rho}_w}
\newcommand{\cR}{\mathcal{R}}
\newcommand{\xsh}[1]{(\x_{#1})_{\#}}
\newcommand{\tx}{\tilde{\mathbf{x}}}
\newcommand{\ty}{\tilde{\mathbf{y}}}
\newcommand{\ub}[1]{\underline{#1}}
\newcommand{\wb}[1]{\bar{\underline{#1}}}
\newcommand{\sigfn}{\sigma_f^*}
\newcommand{\cC}{\mathcal{C}}
\newcommand{\cH}{\mathcal{H}}
\newcommand{\cA}{\mathcal{A}}
\newcommand{\cS}{\mathcal{S}}
\newcommand{\KTN}{K_{\text{Transient}}^{(\text{NCVX})}}
\newcommand{\KTP}{K_{\text{Transient}}^{(\text{PL})}}
\definecolor{cuhkpl}{RGB}{152,24,147}
\begin{document}

\title[Distributed Stochastic Momentum Tracking]{An Accelerated Distributed Stochastic Gradient Method with Momentum}


\author[1]{\fnm{Kun} \sur{Huang}}\email{kunhuang@link.cuhk.edu.cn}

\author*[1]{\fnm{Shi} \sur{Pu}}\email{pushi@cuhk.edu.cn}

\author[2]{\fnm{Angelia} \sur{Nedi\'c}}\email{angelia.nedich@asu.edu}

\affil*[1]{\orgdiv{School of Data Science (SDS)}, \orgname{The Chinese University of Hong Kong, Shenzhen}, \orgaddress{\street{2001 Longxiang Boulevard}, \city{Shenzhen}, \postcode{518172}, \state{Guangdong}, \country{China}}}

\affil[2]{\orgdiv{School of Electrical, Computer and Energy Engineering}, \orgname{Arizona State University}, \orgaddress{\street{1151 S Forest Ave}, \city{Tempe}, \postcode{85281}, \state{Arizona}, \country{United States}}}



\abstract{In this paper, we introduce an accelerated distributed stochastic gradient method with momentum
for solving the distributed optimization problem, where a group of $n$ agents collaboratively minimize the average of the local objective functions over a connected network. 
The method, termed ``Distributed Stochastic Momentum Tracking (DSMT)'', is a single-loop algorithm that utilizes the momentum tracking technique as well as the Loopless Chebyshev Acceleration (LCA) method.
We show that DSMT can asymptotically achieve comparable convergence rates as centralized stochastic gradient descent (SGD) method under a general variance condition regarding the stochastic gradients. Moreover, the number of iterations (transient times) required for DSMT to achieve such rates behaves as $\orderi{n^{5/3}/(1-\lambda)}$ for minimizing general smooth objective functions, and $\orderi{\sqrt{n/(1-\lambda)}}$ under the Polyak-{\L}ojasiewicz (PL) condition. Here, the term $1-\lambda$ denotes the spectral gap of the mixing matrix related to the underlying network topology.
Notably, the obtained results do not rely on multiple inter-node communications or stochastic gradient accumulation per iteration, and the transient times are the shortest under the setting to the best of our knowledge.
}

\keywords{Distributed optimization, Stochastic optimization, Nonconvex optimization, Communication networks}


\pacs[MSC Classification]{90C15, 90C26, 68Q25,68W15}

\maketitle

\section{Introduction}
We investigate how a group of networked agents $\cN:=\crki{1,2,\ldots, n}$ collaborate to solve the following distributed optimization problem:
\begin{equation}
    \label{eq:P}
    \min_{x\in\R^p} f(x):= \frac{1}{n}\sum_{i=1}^n f_i(x),
\end{equation}
where each agent $i$ has access only to its local objective function $f_i:\R^p\rightarrow\R$. Problem \eqref{eq:P} arises in various fields, including signal processing, distributed estimation, and machine learning. Specifically, solving large-scale problems such as training foundation models \cite{brown2020language,chowdhery2022palm,zhang2022opt,openai2023gpt4} in specialized and centralized clusters can be costly \cite{yuan2022decentralized,wang2023cocktailsgd}, while decentralized training offers a promising alternative that relieves the robustness bottleneck and reduces the high latency at the central server \cite{nedic2018network}. 
In this paper, we consider the typical setting where each agent can query unbiased noisy gradients of $f_i(x)$ in the form of $g_i(x;\xi)$ that satisfies a certain variance condition, where $\xi$ represents a random sample or a batch of random samples.

Decentralization may slow down the optimization process due to the partial communication over sparse networks. For instance, the number of iterations required for the distributed subgradient descent (DGD) method \cite{nedic2009distributed} to reach certain accuracy threshold significantly grows with the network size \cite{pu2020asymptotic}, downgrading its performance compared to the centralized subgradient method. When stochastic gradients are used, however, several distributed methods such as those in \cite{lian2017can,pu2021sharp,huang2021improving,tang2018d,spiridonoff2020robust,pu2021distributed,alghunaim2021unified,yuan2021removing,xin2021improved,huang2023cedas,koloskova2019decentralized} have exhibited the so-called ``asymptotic network independent (ANI)'' property \cite{pu2020asymptotic}. In other words, these methods achieve the same convergence rates as their centralized counterparts after a finite transient time has passed. Such a property ensures that the required number of iterations to achieve high accuracy do not increase significantly with the network size. Nevertheless, it is still critical to develop algorithms with shorter transient times that typically depend on the network size and topology. 
Currently, the best known transient times for minimizing general smooth objective functions are $\orderi{n^3/(1-\lambda)^2}$ \cite{alghunaim2021unified,huang2023cedas} and $\orderi{n/(1-\lambda)}$ \cite{huang2021improving,alghunaim2021unified,yuan2021removing} with and without the strong convexity (or the Polyak-{\L}ojasiewicz condition) on the objective functions, respectively.

Chebyshev Acceleration (CA) has been a popular technique for achieving faster consensus among networked agents during the distributed optimization process \cite{scaman2017optimal}. When CA is combined with stochastic gradient accumulation using a large batch size at every iteration, the optimal convergence guarantee can be achieved \cite{lu2021optimal,yuan2022revisiting}.
However, CA requires inner loops of multiple communication steps which may affect the practical performance of the algorithms, and large batches are not always available.
Inspired by the recently developed Loopless Chebyshev Acceleration (LCA) technique that works without inner loops \cite{song2021optimal}, we consider in this paper a new method termed ``Distributed Stochastic Momentum Tracking (DSMT)'' that incorporates two key features. First, the LCA technique is utilized to accelerate the communication process and avoids multiple communication steps between two successive stochastic gradient computations. Second, each agent employs an auxiliary variable to track the average momentum parameter over the network.
 We show that DSMT improves the transient times over the existing works for minimizing smooth objective functions, with or without the Polyak-{\L}ojasiewicz (PL) condition. In particular, the convergence rates of DSMT are derived under the most general condition on the stochastic gradients under the distributed setting \cite{khaled2020better,li2022unified,lei2019stochastic}, enhancing the applicability of the method.
 
It is worth noting that, the convergence result of DSMT implies the momentum parameter can benefit the algorithmic convergence of distributed stochastic gradient methods, particularly related to the network topology. Such an observation can potentially inspire future development of distributed optimization algorithms.

\subsection{Related Works}

There exists a rich literature on the distributed implementation of stochastic gradient methods over networks. For example, early works including \cite{chen2012limiting,morral2017success,chen2015learning,pu2017flocking,pu2018swarming} suggest that distributed stochastic gradient methods may achieve comparable performance with centralized SGD under specific settings. More recently, the paper \cite{lian2017can} first demonstrates the ANI property enjoyed by the distributed stochastic gradient descent (DSGD) method, followed by the study in \cite{pu2021sharp,koloskova2019decentralized,pu2020asymptotic}. Subsequent research that considers gradient tracking based methods \cite{xin2021improved,pu2021distributed,koloskova2021improved,xiao2023one,ye2022snap} and primal-dual like methods \cite{huang2021improving,yuan2021removing,tang2018d,huang2023cedas} also demonstrate the ANI property while relieving from the data heterogeneity challenge encountered by DSGD. Both types of methods are unified in \cite{alghunaim2021unified}. 

Recently, there exists a line of works that achieve the optimal iteration complexity in distributed stochastic optimization by integrating stochastic gradient accumulation and accelerated communication techniques such as Chebyshev Acceleration (CA) \cite{yuan2022revisiting,lu2021optimal}. 
In particular, the works in \cite{yuan2022revisiting} and \cite{lu2021optimal} obtain the optimal iteration complexity under smooth objective functions with and without the PL condition, respectively. 
For finite sum problems, the work in \cite{luo2022optimal} achieves the optimal guarantee by further incorporating a variance reduction technique.\footnote{It is worth mentioning that when the objective functions take a finite sum form, distributed random reshuffling (RR) methods can further improve the iteration complexity \cite{huang2023drr,huang2023abcrr}.}
Nevertheless, CA that relies on inner loops of multiple communication steps may not be communication-efficient, and sampling large batches for computing the stochastic gradients is not always practical \cite{xiao2023one}.
Therefore, we are motivated to develop an efficient distributed stochastic gradient method that samples constant batches for computing the stochastic gradients and performs one round of communication at every iteration. 

The momentum method \cite{polyak1964some,ghadimi2020single} is popular for accelerating the convergence of first-order methods. Recently, it has been applied to the distributed setting \cite{gao2023distributed,yuan2021decentlam,wang2021distributed,lin2021quasi,xiao2023one}, resulting in practical improvements. However, it remains unclear whether momentum parameters can benefit the convergence of distributed stochastic gradient methods theoretically.

\subsection{Main Contribution}

The main contribution of this paper is two-fold. 

Firstly, the proposed DSMT method achieves improved convergence rates and better transient times over the existing distributed stochastic gradient methods (see Table \ref{tab:ncvx}). Specifically, for general smooth objective functions, DSMT shortens the state-of-the-art transient time from $\mathcal{O}(n^3/(1-\lambda)^2)$ to $\mathcal{O}(n^{5/3}/(1-\lambda))$, where $1-\lambda$ denotes the spectral gap of the mixing matrix that may scale as $\mathcal{O}(1/n^2)$ for sparse networks. When the objective functions further satisfy the PL condition, DSMT shortens the state-of-the-art transient time from $\mathcal{O}(n/(1-\lambda))$ to $\mathcal{O}(\sqrt{n/(1-\lambda)})$.

Secondly, the convergence results of DSMT only assumes the so-called ABC condition \cite{khaled2020better,li2022unified,lei2019stochastic} on the stochastic gradients, which is the most general variance condition under the distributed settings to our knowledge. By comparison, existing works depend on the relaxed growth condition (RGC) or the bounded variance \cite{bottou2018optimization} regarding the stochastic gradients or the bounded gradient dissimilarity (BGD) assumption concerning the data heterogeneity among the agents. 
Such a generalization expands the practical applicability of DSMT given that the ABC condition covers counterexamples that do not satisfy RGC \cite{huang2023distributed,khaled2020better}.

\begin{table}[]
    \setlength{\tabcolsep}{0.5pt}
    \centering
    \begin{tabular}{@{}ccccc@{}}
        \toprule
        Method                                                        & g                     & \makecell[c]{Additional\\ Assumption} & \makecell[c]{Transient Time\\ Nonconvex}                                    & \makecell[c]{Transient Time\\ PL condition}                                \\ \midrule
        \makecell[c]{DSGD \cite{lian2017can}}                         & $(\sigma^2, 0)$       & $(\zeta^2, 0)$-BGD                    & $\order{\frac{n^3}{(1-\lambda)^4}}$                                         & /                                          \\
        \makecell[c]{DSGD \cite{pu2021sharp}}                         & $(\sigma^2, \eta^2)$  & /                    & /                                                                           & $\order{\frac{n}{(1-\lambda)^2}}$                                          \\
        \makecell[c]{ED \cite{alghunaim2021unified}}                  & $(\sigma^2, 0)$       & /                                     & $\order{\frac{n^3}{(1-\lambda)^2}}$                                         & $\order{\frac{n}{1-\lambda}}$                                              \\
        \makecell[c]{EDAS\cite{huang2021improving}}                   & $(\sigma^2, \eta^2)$  & /                                     & /                                                                           & $\order{\frac{n}{1-\lambda}}$                                              \\
        \makecell[c]{DSGT \cite{alghunaim2021unified}}                & $(\sigma^2, 0)$       & /                                     & $\order{\max\crk{\frac{n^3}{(1-\lambda)^2}, \frac{n}{(1-\lambda)^{8/3}} }}$ & $\order{\max\crk{\frac{n}{1-\lambda}, \frac{n^{1/3}}{(1-\lambda)^{4/3}}}}$ \\
        { DSGT \cite{koloskova2021improved}}                       & {$(\sigma^2, 0)$} & { /}                               & {$\order{\frac{n^3}{c^4(1-\lambda)^2}}$}                                & {$\order{\frac{n}{c^2(1-\lambda)}}$}                                   \\
        DSGT-HB \cite{gao2023distributed}                             & $(\sigma^2, 0)$       & /                                     & $\order{\frac{n^3}{(1-\lambda)^4}}$                                         & /                                                                          \\
        QG-DSGDm \cite{lin2021quasi}                                  & $(\sigma^2, 0)$       & $(\zeta^2, 0)$-BGD                    & $\order{\frac{n^3}{(1-\lambda)^4}}$                                         & /                                                                          \\
        { Prox-DASA-GT \cite{xiao2023one}}                         & {$(\sigma^2, 0)$} & { /}                               & {$\order{\frac{n^3}{(1-\lambda)^4}}$}                                   & { /}                                                                    \\
        \makecell[c]{Momentum\\ Tracking \cite{takezawa2022momentum}} & $(\sigma^2, 0)$       & /                                     & $\order{\frac{n^3}{(1-\lambda)^8}}$                                         & /                                                                          \\ \midrule
        \makecell[c]{DSMT\\ \textbf{(This paper)}}                    & \textbf{ABC}          & $f_i$ lower bounded                   & $\boldsymbol{\order{\frac{n^{5/3}}{1-\lambda}}}$                            & $\boldsymbol{\order{\sqrt{\frac{n}{1-\lambda}}}}$                          \\ \bottomrule
        \end{tabular}
    \caption{Comparison of different methods regarding the assumptions and the transient times for minimizing smooth objective functions. The ``g'' column describes the conditions assumed on the stochastic gradients, where the notation $(\sigma^2, \eta^2)$ represents the relaxed growth condition, i.e., $\condEi{\normi{g_i(x;\xi_{i}) - \nabla f_i(x)}^2}{x}\leq \sigma^2 + \eta^2\norm{\nabla f_i(x)}^2$, $\forall i\in\cN$, and ABC stands for Assumption \ref{as:abc}.
    The notation $(\zeta^2, \psi^2)$-BGD stands for the bounded gradient dissimilarity assumption requiring that $\frac{1}{n}\sumn\norm{\nabla f_i(x) - f(x)}^2\leq \zeta^2 + \psi^2\normi{\nabla f(x)}^2$. { The constant $c$ in \cite{koloskova2021improved} depends on the mixing matrix $W$.}
    }
        \label{tab:ncvx}
\end{table}

\subsection{Notation and Assumptions}

Throughout this paper, column vectors are considered by default unless specified otherwise. We use $x_{i,k}\in\R^p$ to denote the iterate of agent $i$ at the $k$-th iteration. For the sake of clarity and presentation, we introduce the stacked variables as follows:
\begin{align*}
    \x_k&:= (x_{1,k}, x_{2, k},\ldots, x_{n,k})^{\T}\in\R^{n\times p},\\
    \nabla F (\x_k) &:= \prt{\nabla f_1(x_{1,k}),\nabla f_2(x_{2,k}),\ldots, \nabla f_n(x_{n,k})}^{\T}\in\R^{n\times p},\\
    A_{\#}&:=\begin{pmatrix}
        A\\
        A
    \end{pmatrix} \in\R^{2n\times p}.
\end{align*}
We use $\bar{x}\in\R^p$ to denote the averaged variables of $x_i$ among the agents. For instance, the variable $\bar{x}_k:= 1/n\sumn x_{i, k}$ stands for the average of all the agents' iterates at the $k$-th iteration. 
We use $\normi{\cdot}$ to denote the Frobenius norm for a matrix by default and the $\ell_2$ norm for a vector. The term $\inproi{a, b}$ stands for the inner product of two vectors $a, b\in\R^{p}$. For two matrices $A, B\in\R^{n\times p}$, $\inpro{A, B}$ is defined as $\inproi{A, B} := \sum_{i=1}^n\inproi{A_i, B_i}$, where $A_i$ (and $B_i$) represents the $i$-row of $A$ (and $B$).

We next introduce the standing assumptions. Assumption \ref{as:smooth} is common that requires the objective functions to be smooth and lower bounded.
\begin{assumption}
    \label{as:smooth}
    Each $f_i(x):\R^p\rightarrow\R$ is $L$-smooth, i.e., 
    \begin{align*}
        \norm{\nabla f_i(x) - \nabla f_i(x')}\leq L\norm{x - x'}, \ \forall x,x'\in\R^p,
    \end{align*}
    and bounded from below, i.e., $f_i(x)\geq \uf_i := \inf_x f_i(x)>-\infty, \forall x\in\R^p$. 
\end{assumption}

We let $f^*:=\inf_x f(x)$. Also,
let the filtration $\crk{\cF_k}_{k\geq 0}$ be generated by $\crki{\xi_{i,\ell}| i\in\cN, \ell = 0,1,\ldots, k-1}$. Regarding the stochastic gradients, we consider the ABC condition as given in the following assumption.
\begin{assumption}
    \label{as:abc}
    Assume each agent has access to an conditionally unbiased stochastic gradient $g_i(x;\xi_i)$ of $\nabla f_i(x)$, i.e., $\condEi{g_i(x;\xi_{i,k})}{\cF_k} = \nabla f_i(x)$, and there exist constants $C,\sigma\geq 0$ such that for any $k\in\mathbb{N}$, $i\in\cN$,
    \begin{align}
        \label{eq:abc}
        \condE{\norm{g_{i}(x;\xi_{i,k}) - \nabla f_i(x)}^2}{\cF_k}&\leq C\brk{f_i(x) - \uf_i} + \sigma^2.
    \end{align} 
    Additionally, the stochastic gradients are independent across different agents for all $k\geq 0$. 
\end{assumption}
It is noteworthy that the relaxed growth condition (RGC) implies \eqref{eq:abc} given that each $f_i$ has Lipschitz continuous gradients and is lower bounded \cite{khaled2020better}, however, there exist examples that only satisfy the ABC condition but not RGC \cite{khaled2020better,huang2023distributed}. Moreover, Assumption \ref{as:abc} generally holds for the empirical risk minimization problems if the stochastic gradients are queried through uniformly sampling with replacement \cite{huang2023distributed}.

We assume the agents in the network are connected via a graph $\mathcal{G} = (\cN, \mathcal{E})$ with $\mathcal{E}\subseteq \cN\times \cN$ representing the set of edges connecting the agents. In particular, $(i,i)\in\mathcal{E}$ for all $i\in\mathcal{N}$. The set of neighbors for agent $i$ is denoted by $\mathcal{N}_i=\{j\in \mathcal{N}:(i,j)\in \mathcal{E}\}$.
The element $w_{ij}$ in the weight matrix $W\in\mathbb{R}^{n\times n}$ represents the weight of the edge between agents $i$ and $j$. 
Regarding the network topology, we consider Assumption \ref{as:graph} that is standard in the distributed optimization literature. The condition guarantees that the spectral norm $\lambda$ of the matrix $(W - \1\1^{\T}/n)$ is strictly less than one. 

\begin{assumption}
    \label{as:graph}
    The graph $\mathcal{G}$ is undirected and {connected}, i.e., there exists a path between any two nodes in $\mathcal{G}$. There is a direct link between $i$ and $j$ $(i\neq j)$ in $\mathcal{G}$ if and only if $w_{ij}>0$ and $w_{ji}>0$; otherwise, $w_{ij}=w_{ji}=0$. The mixing matrix $W$ is nonnegative and doubly stochastic, i.e., $\1^{\T}W=\1^{\T}$ and $W\1=\1$. In addition, we assume $w_{ii}>0$ for all $i\in\cN$.
\end{assumption} 

\subsection{Organization}

The rest of this paper is organized as follows. In Section \ref{sec:dsmt}, we introduce the proposed DSMT algorithm starting with its motivation. We then proceed to conduct a preliminary analysis in Section \ref{sec:pre}. The main convergence results of DSMT under smooth objective functions with or without the PL condition are presented in Section \ref{sec:main}. We then provide numerical experiments in Section \ref{sec:sim} and conclude the paper in Section \ref{sec:con}.

\section{A Distributed Stochastic Momentum Tracking Method}
\label{sec:dsmt}

In this section, we introduce the proposed algorithm, Distributed Stochastic Momentum Tracking (DSMT). We start with the motivation and the intuition behind DSMT.

\subsection{Motivation}
\label{subsec:motivation}

The proposed algorithm relies heavily on the so-called Loopless Chebyshev Acceleration (LCA) technique stated in Lemma \ref{lem:lca} below. 
\begin{lemma}[Lemma 11 in \cite{song2021optimal}]
    \label{lem:lca}
    Suppose the mixing matrix $W$ is symmetric and positive semidefinite. Define $\eta_w := 1/(1 + \sqrt{1 - \lambda^2})$, then $\trw:= \sqrt{\eta_w}\sim\orderi{1-\sqrt{1-\lambda^2}}$, and for any $A\in\R^{n\times p}$ and $k\geq 0$,
    \begin{align*}
        \norm{\tpi\tW^k\tpi A_{\#}}^2 \leq c_0 \trw^{2k}\norm{\Pi A}^2, \ c_0 = 14,
    \end{align*}
	where 
	\begin{align*}
		\tpi:= \begin{pmatrix}
			\Pi & \0 \\
			\0 & \Pi
		\end{pmatrix},\ \Pi:= I - \frac{\1\1^{\T}}{n},\ A_{\#}:= \begin{pmatrix}
			A\\
			A
		\end{pmatrix},\ \tW := \begin{pmatrix}
			(1 + \eta_w)W & -\eta_w I\\
			I & \0
		\end{pmatrix}.
	\end{align*}
\end{lemma}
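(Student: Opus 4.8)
The plan is to diagonalize $W$ and thereby reduce the estimate to a uniform bound on the powers of a one–parameter family of $2\times2$ Chebyshev–type matrices. Since $W$ is symmetric, positive semidefinite and doubly stochastic, I would write $W=\sum_{i=1}^{n}\mu_i q_i q_i^{\T}$ with an orthonormal eigenbasis $q_1=\1/\sqrt n,q_2,\dots,q_n$ and $1=\mu_1\ge\mu_2\ge\cdots\ge\mu_n\ge0$; by definition $\lambda=\norm{W-\1\1^{\T}/n}=\mu_2$, so $\mu_i\in[0,\lambda]$ for all $i\ge2$. The structural observation is that $\tW\begin{pmatrix}a q_i\\ b q_i\end{pmatrix}=\begin{pmatrix}\bigl((1+\eta_w)\mu_i a-\eta_w b\bigr) q_i\\ a q_i\end{pmatrix}$, so the plane spanned by $\begin{pmatrix}q_i\\ \0\end{pmatrix}$ and $\begin{pmatrix}\0\\ q_i\end{pmatrix}$ is $\tW$–invariant and $\tW$ acts on it as $T_{\mu_i}:=\begin{pmatrix}(1+\eta_w)\mu_i&-\eta_w\\ 1&0\end{pmatrix}$, while $\tpi$ fixes $\begin{pmatrix}q_i\\ q_i\end{pmatrix}$ for every $i\ge2$ and annihilates the $q_1$–direction. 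Writing $\Pi A=\sum_{i\ge2}q_i(q_i^{\T}A)$, so that $\tpi A_{\#}=\sum_{i\ge2}\begin{pmatrix}q_i\\ q_i\end{pmatrix}(q_i^{\T}A)$, the mutual orthogonality of the $q_i$ in the Frobenius inner product then gives
\[
\norm{\tpi\tW^{k}\tpi A_{\#}}^{2}=\sum_{i=2}^{n}\Bigl\|T_{\mu_i}^{k}(1,1)^{\T}\Bigr\|^{2}\norm{q_i^{\T}A}^{2}\le\Bigl(\max_{\mu\in[0,\lambda]}\bigl\|T_{\mu}^{k}(1,1)^{\T}\bigr\|^{2}\Bigr)\norm{\Pi A}^{2},
\]
using $\sum_{i\ge2}\norm{q_i^{\T}A}^{2}=\norm{\Pi A}^{2}$. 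It therefore suffices to show $\bigl\|T_{\mu}^{k}(1,1)^{\T}\bigr\|^{2}\le c_0\trw^{2k}$ for all $\mu\in[0,\lambda]$ and all $k\ge0$.

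For the $2\times2$ analysis, the sequence $(p_k)$ defined by $(p_k,p_{k-1})^{\T}=T_{\mu}^{k}(1,1)^{\T}$ satisfies $p_{k+1}=(1+\eta_w)\mu\,p_k-\eta_w p_{k-1}$ with $p_0=p_{-1}=1$, whose characteristic polynomial is $t^{2}-(1+\eta_w)\mu t+\eta_w$. The crux of the whole lemma is the algebraic fact that the choice $\eta_w=1/(1+\sqrt{1-\lambda^2})$ — equivalently $(1-\eta_w)/\eta_w=\sqrt{1-\lambda^2}$, i.e.\ $\lambda^{2}=(2\eta_w-1)/\eta_w^{2}$ — makes
\[
4\eta_w-(1+\eta_w)^{2}\lambda^{2}=\frac{(1-\eta_w)^{2}(2\eta_w+1)}{\eta_w^{2}}>0\qquad(\lambda\in(0,1)),
\]
so the discriminant $(1+\eta_w)^{2}\mu^{2}-4\eta_w$ is \emph{strictly} negative for every $\mu\in[-\lambda,\lambda]$. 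Hence $T_{\mu}$ is diagonalizable with a complex–conjugate pair of eigenvalues $\sqrt{\eta_w}\,e^{\pm i\theta_\mu}$ of modulus exactly $\sqrt{\eta_w}=\trw$, where $\cos\theta_\mu=(1+\eta_w)\mu/(2\sqrt{\eta_w})\in[0,1)$; in particular no Jordan block appears anywhere on $[0,\lambda]$, which is exactly why a \emph{uniform} constant is attainable. Substituting $p_k=(\sqrt{\eta_w})^{k}g_k$ reduces the recursion to $g_{k+1}=2\cos\theta_\mu\,g_k-g_{k-1}$ with $g_0=1$, $g_{-1}=\sqrt{\eta_w}$, whose bounded solution is $g_k=\bigl(\sin((k+1)\theta_\mu)-\sqrt{\eta_w}\sin(k\theta_\mu)\bigr)/\sin\theta_\mu$.

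The last step, which I expect to be the main obstacle, is a bound on $|g_k|$ that is uniform in $\mu$ and $k$, the difficulty being that $\sin\theta_\mu$ degenerates as $\lambda\uparrow1$. I would split the numerator of $g_k$ as $\bigl(\sin((k+1)\theta_\mu)-\sin(k\theta_\mu)\bigr)+(1-\sqrt{\eta_w})\sin(k\theta_\mu)$: the first term divided by $\sin\theta_\mu$ is $\cos\bigl((k+\tfrac12)\theta_\mu\bigr)/\cos(\theta_\mu/2)$, whose absolute value is at most $1/\cos(\theta_\mu/2)\le\sqrt2$ since $\theta_\mu\in(0,\pi/2]$; for the second one uses $\theta_\mu\ge\theta_\lambda$ together with the two matching estimates that come straight from the definition of $\eta_w$, namely $1-\sqrt{\eta_w}\le\tfrac12\sqrt{1-\lambda^2}$ and $\sin^{2}\theta_\lambda=1-(1+\eta_w)^{2}\lambda^{2}/(4\eta_w)\ge\tfrac34(1-\lambda^{2})$, which give $(1-\sqrt{\eta_w})/\sin\theta_\lambda\le1/\sqrt3$. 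Hence $|g_k|\le\sqrt2+1/\sqrt3<2$, and therefore
\[
\bigl\|T_{\mu}^{k}(1,1)^{\T}\bigr\|^{2}=\eta_w^{k}g_k^{2}+\eta_w^{k-1}g_{k-1}^{2}\le\eta_w^{k}\bigl(g_k^{2}+\eta_w^{-1}g_{k-1}^{2}\bigr)<12\,\trw^{2k}\le14\,\trw^{2k}
\]
for $k\ge1$ (the case $k=0$ being immediate from $\norm{(1,1)^{\T}}^{2}=2$), which is the claimed bound with $c_0=14$. Finally, the asymptotic assertion $\trw=\sqrt{\eta_w}\sim\mathcal O(1-\sqrt{1-\lambda^2})$ is immediate, since $1-\sqrt{\eta_w}=\bigl(\sqrt{1+\sqrt{1-\lambda^2}}-1\bigr)/\sqrt{1+\sqrt{1-\lambda^2}}=\Theta(\sqrt{1-\lambda^2})$.
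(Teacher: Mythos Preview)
Your proof is correct and complete. Note, however, that the paper does not prove this lemma at all: it is quoted verbatim as Lemma~11 of Song and Li (\emph{Optimal Gradient Tracking}, 2021) and used as a black box, so there is no ``paper's own proof'' to compare against.

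That said, your argument is essentially the standard one for Chebyshev--accelerated consensus, and matches the approach in the cited source: diagonalize $W$, reduce to the companion matrix $T_\mu$ on each eigenspace, and exploit the key algebraic fact that the specific choice $\eta_w=1/(1+\sqrt{1-\lambda^2})$ forces the discriminant $(1+\eta_w)^2\mu^2-4\eta_w$ to be strictly negative on the entire interval $[0,\lambda]$, so that both eigenvalues of $T_\mu$ have modulus exactly $\sqrt{\eta_w}$ with no Jordan degeneration. Your verification of the constant is clean: the identity $\sin^2\theta_\lambda/(1-\lambda^2)=(2\eta_w+1)/(4\eta_w)\ge 3/4$ and the bound $1-\sqrt{\eta_w}\le\tfrac12\sqrt{1-\lambda^2}$ together control the potentially dangerous $(1-\sqrt{\eta_w})/\sin\theta_\mu$ term uniformly, giving $|g_k|<2$ and hence the constant $12<14$ for $k\ge1$. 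The only cosmetic remark is that the lemma's phrase ``$\trw\sim\orderi{1-\sqrt{1-\lambda^2}}$'' is loose notation; what is actually used in the paper (see their derivation of~\eqref{eq:trw}) is $1-\trw=\Theta(\sqrt{1-\lambda})$, which is exactly what your final line establishes.
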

The above lemma implies that, by employing an augmented mixing matrix $\tW$, the consensus rate among multiple agents can be accelerated compared to applying the basic consensus procedure using the matrix $W$.
In addition, when LCA is employed in distributed optimization algorithms, there is no need to perform inner loops of multiple communication rounds. For instance, the Optimal Gradient Tracking (OGT) method proposed in \cite{song2021optimal} achieves the optimal communication and computation complexities simultaneously without inner loops of multiple communication rounds for minimizing smooth strongly convex objective functions using full gradients.

In the following discussion, we adhere to the notations introduced in Lemma \ref{lem:lca}. Additionally, we use $[\tx_k]_{1:n}$ to represent the first $n$ rows of $\tx_k\in\R^{2n\times p}$.

It is noteworthy that applying the LCA technique to distributed stochastic gradient methods while achieving improved transient times is a nontrivial task. 
A recent attempt in \cite{ye2022snap} realized the same transient time $\orderi{n/(1-\lambda)}$ as those presented in \cite{huang2021improving,alghunaim2021unified,yuan2021removing}. We discuss the reasons of this result as follows.
In \cite{ye2022snap}, the LCA technique is directly combined with the Distributed Stochastic Gradient Tracking (DSGT) method described in \eqref{eq:DSGT}, where
$\g_k:= \prti{g_1(x_{1,k};\xi_{1,k}),\ldots, g_n(x_{n,k};\xi_{n,k})}^{\T}\in\R^{n\times p}$ denotes the stacked stochastic gradient at the $k$-th iteration.
\begin{subequations}
    \label{eq:DSGT}
    \begin{align}
        \x_{k + 1} &= W\prt{\x_k - \alpha \y_k},\\
        \y_{k + 1} &= W\prt{\y_k + \g_{k + 1} - \g_k},\ \y_0 = \g_0\label{eq:dsgt_y}.
    \end{align}
\end{subequations}
Here, the auxiliary variables $\y_k$ track the averaged stochastic gradients among the agents since $\bar{y}_k = \bar{g}_k$ based on Assumption \ref{as:graph} and induction from \eqref{eq:dsgt_y}.
One common approach for analyzing the convergence of DSGT involves estimating the following term (see, e.g., \cite{pu2021distributed,ye2022snap} or \eqref{eq:lya_hk}):
\begin{equation}
    \label{eq:cons_dsgt}
    \E\brk{\norm{\x_k - \1\bar{x}_k^{\T}}^2} + \frac{\alpha^2 \cC_0}{(1-\lambda)^2}\E\brk{\norm{\y_k - \1\bar{y}_k^{\T}}^2},
\end{equation}
for some constant $\cC_0>0$ that is independent of $n$ and $(1-\lambda)$. However, this leads to an unsatisfactory performance of the algorithm, primarily due to the bias in $\y_k$, i.e., $\E[\y_k - \g_k|\cF_k]\neq 0$.


To alleviate the bias in $\y_k$, we introduce alternative variables, $\z_k$, for $\y_k$ to track. Specifically, the update $\y_k$ is as follows:
\begin{equation}
    \label{eq:dsmt_yk0}
    \y_{k + 1} = W\prt{\y_k + \z_{k + 1} - \z_k}.
\end{equation} 

Simultaneously, the variables $\z_k$ follow the update defined in \eqref{eq:dsmt_zk0}:
\begin{equation}
    \label{eq:dsmt_zk0}
    \z_{k + 1} - \z_k =-(1 - \beta)\prt{\z_k - \nabla F(\x_{k + 1})} + (1 - \beta) \brk{\g_{k + 1} - \nabla F(\x_{k + 1})},
\end{equation}
where $\beta\in(0,1)$.
With the help of the additional term $(1-\beta)$ in \eqref{eq:dsmt_zk0}, the impact of the stochastic gradient variance is better controlled, as detailed in Lemma \ref{lem:cons_R}. Such a strategic adjustment enables the cancellation of the extra factor $1/(1-\lambda)$  in~\eqref{eq:cons_dsgt}.

\begin{remark}
    \label{rem:sgdm}
    Substituting the updates \eqref{eq:dsmt_yk0} and \eqref{eq:dsmt_zk0} into the DSGT method \eqref{eq:DSGT} leads to the following relations. 
    \begin{equation}
        \label{eq:smt}
        \begin{aligned}
            \x_{k + 1} &= W\prt{\x_k - \alpha \y_k},\\
            \z_{k + 1} &= \beta \z_k + (1-\beta)\g_{k + 1},\\
            \y_{k + 1} &= W\prt{\y_k + \z_{k + 1} - \z_k},\ \y_0 = \z_0 = (1-\beta)\g_0.
        \end{aligned}
    \end{equation}
    Invoking Assumption \ref{as:graph}, we can deduce that $\bar{y}_k = \bar{z}_k$ by induction. 
    Consequently, the behavior of $\bar{x}_k$ as described in \eqref{eq:moti_m} resembles the update of the Stochastic Gradient Descent with Momentum (SGDM) method \cite{polyak1964some}. It turns out that $\y_k$ tracks the momentum variable $\z_k$, and this is why the proposed method is named ``distributed stochastic momentum tracking''.
    
    \begin{equation}
        \label{eq:moti_m}
        \begin{aligned}
            \bar{x}_{k + 1} &= \bar{x}_k - \alpha \bar{z}_k,\\
            \bar{z}_{k + 1} &= \beta \bar{z}_k + (1-\beta)\bar{g}_k.
        \end{aligned}
    \end{equation}
    { Notably, setting $\beta = 1-\alpha$ recovers the averaging technique used in stochastic optimization literature, such as \cite{ghadimi2020single,xiao2023one}.}
\end{remark}

The preceding discussion motivates the proposed method given in the following section.

\subsection{The Proposed Method}

We begin by presenting the compact form of the Distributed Stochastic Momentum Tracking (DSMT) method in \eqref{eq:DSMT}.
In essence, DSMT leverages the Loopless Chebyshev Acceleration (LCA) technique for the update \eqref{eq:smt}.
\begin{subequations}
    \label{eq:DSMT}
    \begin{align}
        \tx_{k + 1} &= \tW\prt{\tx_k - \alpha \ysh{k}}\label{eq:tx}\\
        \x_{k + 1} &= [\tx_{k + 1}]_{1:n}\label{eq:xn} \\
        \z_{k + 1} &= \beta\z_k + (1-\beta)\g_{k+1}\label{eq:z}\\
        \ty_{k + 1} &= \tW\prt{\ty_k + \zsh{k + 1} - \zsh{k}}\label{eq:ty}\\
        \y_{k + 1} &= [\ty_{k + 1}]_{1:n}\label{eq:yn},
    \end{align}
\end{subequations}
where $\tW$ defined in Lemma \ref{lem:lca} signifies the application of the LCA technique,
and $\ysh{k} := (\y_k^{\T}, \y_k^{\T})^{\T}\in\R^{2n\times p}$. 
Consequently, the updates in \eqref{eq:xn} and \eqref{eq:yn} are required to obtain $\x_{k + 1}$ and $\ysh{k+1}$ in the subsequent iterations. The updates in \eqref{eq:z} and \eqref{eq:ty} align with the rationale explained for \eqref{eq:smt}, representing a critical step in improving the convergence rate compared to the method outlined in \cite{ye2022snap}.

\begin{algorithm}
	\begin{algorithmic}[1]
		\State Initialize $x_{i,0} = x_{i,0}^l\in\R^p$ for all agent $i\in\mathcal{N}$, determine $W = [w_{ij}]\in\R^{n\times n}$, stepsize $\alpha$ and parameter $\beta$. Initialize $y_{i,0} = y_{i,0}^l = z_{i,0} = (1-\beta)g_{i,0}$. Input $\eta_w$.
		\For{$k=0, 1, 2, ..., K-1$}
		\For{Agent $i = 1, 2, ..., n$ in parallel}
		\State $x_{i, k + \frac{1}{2}} = x_{i,k} - \alpha y_{i,k}$, $x_{i, k + \frac{1}{2}}^l = x_{i, k}^l - \alpha y_{i,k}$. \label{line:dsmt_x}
		\State $x_{i, k + 1} = (1 + \eta_w)\sum_{j\in\cN_i}w_{ij} x_{j, k + \frac{1}{2}} - \eta_w x_{i,k + \frac{1}{2}}^l$, $x_{i, k + 1}^l = x_{i, k + \frac{1}{2}}$. \label{line:dsmt_xcom}
		\State $g_{{i}, k + 1} = g_i(x_{i, k + 1};\xi_{i, k + 1})\in\R^p$.\label{line:dsmt_g}
		\State $z_{i, k + 1} = \beta z_{i, k} + (1-\beta) g_{i, k + 1}$. \label{line:dsmt_z}
		\State $y_{i, k + \frac{1}{2}} = y_{i, k} + z_{i, k + 1} - z_{i,k}$, $y_{i, k + \frac{1}{2}}^l = y_{i, k}^l + z_{i, k + 1} - z_{i,k}$. \label{line:dsmt_y}
		\State $y_{i, k + 1} = (1 + \eta_w) \sum_{j\in\cN_i} w_{ij}y_{j, k + \frac{1}{2}} - \eta_w y_{i, k+\frac{1}{2}}^l$, $y_{i, k + 1}^l = y_{i, k + \frac{1}{2}}$.\label{line:dsmt_ycom}
		\EndFor
		\EndFor
        \State Output $x_{i,T}$ for all agent $i\in\cN$.
	\end{algorithmic}
	\caption{Distributed Stochastic Momentum Tracking (DSMT)}
	\label{alg:dsmt}
\end{algorithm}

The formal description of the DSMT method is outlined in Algorithm \ref{alg:dsmt}. 
At each iteration, agent $i$ first performs an approximate gradient descent step in Line \ref{line:dsmt_x} to obtain $x_{i,k+\frac{1}{2}}$ and $x_{i,k+\frac{1}{2}}^l$. Then, after communicating with neighboring agents the updated $x_{i,k+\frac{1}{2}}$ in Line \ref{line:dsmt_xcom}, the new iterates $x_{i,k+1}$ and $x_{i,k+1}^l$ are obtained. Subsequently, agent $i$ queries a noisy gradient based on $x_{i,k + 1}$ and completes the update of $z_{i, k + 1}$ in Line \ref{line:dsmt_z}. The remaining procedures aim to track the averaged variable $\bar{z}_k$ given the initialization $\y_0 = \z_0 = (1-\beta)\g_0${, which necessitates a communication step in Line~\ref{line:dsmt_ycom}.}


\begin{remark}
    We compare DSMT method with the DSGT method. It is worth noting that DSMT necessitates additional information of $\lambda$ compared to DSGT. This is common for constructing accelerated or optimal methods including those in \cite{lu2021optimal,yuan2022revisiting,scaman2017optimal}. Regarding the communication cost per iteration, each agent in the DSMT method communicates variables $(x_{i,k}, y_{i,k})_{k\geq 0}$, which aligns with the communication load of DSGT. The additional cost primarily stems from the storage of variables $(x_{i,k}^l,y_{i,k}^l, z_{i,k})_{k\geq 0}$. 
\end{remark}

\begin{remark}
    \label{rem:z_minus}
    In the analysis, we introduce the definition $\z_{-1}:= \0$. It is worth noting that the relation $\z_{0} = \beta\z_{-1} + (1-\beta)\g_0$ remains valid due to the initialization $\z_0 = (1-\beta)\g_0$ in Algorithm \ref{alg:dsmt}. As a consequence, the subsequent analysis involving $\z_{-1}$ is valid.
\end{remark}

\section{Preliminary Analysis}
\label{sec:pre}

In this section, we present several preliminary results. We start with establishing the recursions involving the averaged variables $\bar{x}_{k}$, $\bar{y}_k$, and $\bar{z}_k$.

\begin{lemma}
    \label{lem:avg}
    Let Assumption \ref{as:graph} hold and $\y_0 = \z_0 = (1-\beta)\g_0$, we have for any $k\geq 0$ that
    \begin{align*}
        \bar{x}_{k + 1} &= \bar{x}_k - \alpha \bar{y}_k,\\
        \bar{z}_{k + 1} &= \beta\bar{z}_k + (1-\beta)\bar{g}_{k+1},\\
        \bar{y}_{k + 1} &= \bar{z}_{k + 1}.
    \end{align*}
\end{lemma}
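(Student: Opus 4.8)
The plan is to unfold the $2\times 2$ block structure of the augmented matrix $\tW$ from Lemma \ref{lem:lca}, average each block via left multiplication by $\frac1n\1^{\T}$ using the double stochasticity $\1^{\T}W=\1^{\T}$ from Assumption \ref{as:graph}, and close two short inductions anchored at the initialization in Algorithm \ref{alg:dsmt}. The third identity, $\bar z_{k+1}=\beta\bar z_k+(1-\beta)\bar g_{k+1}$, needs no induction: it is just \eqref{eq:z} averaged over the agents. Throughout, I write $[\tx_k]_{n+1:2n}$ for the bottom $n$ rows of $\tx_k$ (the stacked auxiliary variables $x_{i,k}^{l}$), and likewise for $\ty_k$.

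For the $\bar x$--recursion I would first read off, from the bottom block of \eqref{eq:tx} (the lower row block of $\tW$ being $(I,\0)$ and the top block of $\ysh k$ being $\y_k$), the identity $[\tx_{k+1}]_{n+1:2n}=\x_k-\alpha\y_k$, valid for every $k\ge0$; the top block of \eqref{eq:tx} reads $\x_{k+1}=(1+\eta_w)W(\x_k-\alpha\y_k)-\eta_w\prt{[\tx_k]_{n+1:2n}-\alpha\y_k}$. Left multiplying by $\frac1n\1^{\T}$ and using $\1^{\T}W=\1^{\T}$ gives
\begin{equation*}
\bar x_{k+1}=(1+\eta_w)(\bar x_k-\alpha\bar y_k)-\eta_w\prt{\tfrac1n\1^{\T}[\tx_k]_{n+1:2n}-\alpha\bar y_k}.
\end{equation*}
For $k=0$ the initialization $x_{i,0}=x_{i,0}^{l}$ gives $\tfrac1n\1^{\T}[\tx_0]_{n+1:2n}=\bar x_0$, so the right-hand side equals $\bar x_0-\alpha\bar y_0$. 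For $k\ge1$ the bottom-block identity at step $k-1$ yields $\tfrac1n\1^{\T}[\tx_k]_{n+1:2n}=\bar x_{k-1}-\alpha\bar y_{k-1}$, which by the induction hypothesis $\bar x_k=\bar x_{k-1}-\alpha\bar y_{k-1}$ equals $\bar x_k$; substituting, the right-hand side again collapses to $\bar x_k-\alpha\bar y_k$, closing the induction.

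The $\bar y$--recursion follows by the identical manipulation on \eqref{eq:ty}: the bottom block gives $[\ty_{k+1}]_{n+1:2n}=\y_k+\z_{k+1}-\z_k$, and averaging the top block gives
\begin{equation*}
\bar y_{k+1}=(1+\eta_w)(\bar y_k+\bar z_{k+1}-\bar z_k)-\eta_w\prt{\tfrac1n\1^{\T}[\ty_k]_{n+1:2n}+\bar z_{k+1}-\bar z_k}.
\end{equation*}
The initialization $y_{i,0}=y_{i,0}^{l}$ and $\y_0=\z_0=(1-\beta)\g_0$ gives the base case $\bar y_0=\bar z_0$ with $\tfrac1n\1^{\T}[\ty_0]_{n+1:2n}=\bar z_0$; carrying along the hypothesis $\bar y_k=\bar z_k$ (which, via the bottom-block identity at step $k-1$, also forces $\tfrac1n\1^{\T}[\ty_k]_{n+1:2n}=\bar z_k$), both parenthesized terms reduce to $\bar z_{k+1}$ and hence $\bar y_{k+1}=\bar z_{k+1}$.

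There is essentially no analytic content here; the only thing to watch is the bookkeeping of the auxiliary bottom-block (``$l$'') variables and the consistency of the two base cases with the initialization $\x_0^{l}=\x_0$, $\y_0^{l}=\y_0$, $\y_0=\z_0$. A marginally cleaner alternative would be to note that $(\1^{\T},-\eta_w\1^{\T})$ is a left eigenvector of $\tW$ for the eigenvalue $1$, so that the functional $M\mapsto\1^{\T}[M]_{1:n}-\eta_w\1^{\T}[M]_{n+1:2n}$ is invariant under left multiplication by $\tW$; since $A_{\#}$ has identical blocks, applying this functional to \eqref{eq:tx} and \eqref{eq:ty} reproduces the recursions with less case analysis.
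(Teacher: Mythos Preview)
Your proposal is correct and follows essentially the same route as the paper: unfold the two block rows of $\tW$, average using $\1^{\T}W=\1^{\T}$, and run a short induction showing that the bottom-block average (the paper's $\ub{x}_k:=\frac{1}{n}\1^{\T}[\tx_k]_{n+1:2n}$, and likewise for $\ty$) coincides with the top-block average; the paper additionally tracks $\wb{x}_k:=\frac{1}{2n}\1^{\T}\tx_k$, but that is incidental. One small bookkeeping point: in your $\bar y$-induction the identity $\frac{1}{n}\1^{\T}[\ty_k]_{n+1:2n}=\bar z_k$ actually uses $\bar y_{k-1}=\bar z_{k-1}$ (from the bottom-block relation $[\ty_k]_{n+1:2n}=\y_{k-1}+\z_k-\z_{k-1}$), not $\bar y_k=\bar z_k$; state the hypothesis as strong induction and the step goes through verbatim.
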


\begin{proof}
    See Appendix \ref{app:avg}.
\end{proof}

As highlighted in Remark \ref{rem:sgdm}, the update of $\bar{x}_k$ can be viewed as an approximate version of the SGDM method.
Such an observation prompts us to investigate the behavior of a series of auxiliary variables $\crki{\bar{d}_k}$ defined as follows:
\begin{equation}
    \label{eq:dk}
    \begin{aligned}
        \bar{d}_k:= \begin{cases}
            \bar{x}_k, & k = 0,\\
            \frac{1}{1-\beta}\bar{x}_k - \frac{\beta}{1-\beta}\bar{x}_{k - 1}, & k\geq 1.
        \end{cases}
    \end{aligned}
\end{equation}

The behavior of $\bar{d}_k$ is more like the update of SGD compared to that of $\bar{x}_k$; see Lemma \ref{lem:dk} below. Similar arguments have been explored in prior works, including \cite{yan2018unified,ghadimi2015global,liu2020improved}.

\begin{lemma}
    \label{lem:dk}
    Let Assumption \ref{as:graph} hold. Define $\bar{d}_k$ as in \eqref{eq:dk}, we have $\bar{d}_{k + 1} = \bar{d}_k - \alpha \bar{g}_k$, $k\geq 0$,
    and 
    \begin{align}
        \label{eq:dk_xk}
       \bar{d}_{k} - \bar{x}_k = -\frac{\alpha\beta}{1-\beta}\bar{z}_{k-1},\; k\geq 0.
    \end{align}
\end{lemma}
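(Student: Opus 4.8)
The plan is to verify the two claims of Lemma~\ref{lem:dk} by straightforward induction on $k$, leaning on the recursions for $\bar{x}_k$, $\bar{y}_k$, and $\bar{z}_k$ already established in Lemma~\ref{lem:avg}. The key computational input is the telescoping structure hidden in the definition \eqref{eq:dk} of $\bar{d}_k$, combined with the momentum recursion $\bar{z}_{k+1} = \beta\bar{z}_k + (1-\beta)\bar{g}_{k+1}$ and the fact that $\bar{y}_k = \bar{z}_k$.

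First I would prove the auxiliary identity \eqref{eq:dk_xk}, namely $\bar{d}_k - \bar{x}_k = -\frac{\alpha\beta}{1-\beta}\bar{z}_{k-1}$ for $k \geq 0$, adopting the convention $\bar{z}_{-1} = \0$ from Remark~\ref{rem:z_minus} so that the $k=0$ case reads $\bar{d}_0 - \bar{x}_0 = 0$, which holds by definition. For $k \geq 1$, directly substituting \eqref{eq:dk} gives
\begin{align*}
    \bar{d}_k - \bar{x}_k = \frac{1}{1-\beta}\bar{x}_k - \frac{\beta}{1-\beta}\bar{x}_{k-1} - \bar{x}_k = \frac{\beta}{1-\beta}(\bar{x}_k - \bar{x}_{k-1}),
\end{align*}
and then applying $\bar{x}_k = \bar{x}_{k-1} - \alpha\bar{y}_{k-1}$ from Lemma~\ref{lem:avg} together with $\bar{y}_{k-1} = \bar{z}_{k-1}$ yields $\bar{x}_k - \bar{x}_{k-1} = -\alpha\bar{z}_{k-1}$, hence $\bar{d}_k - \bar{x}_k = -\frac{\alpha\beta}{1-\beta}\bar{z}_{k-1}$ as required. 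This also covers $k=1$ since $\bar{z}_0 = (1-\beta)\bar{g}_0 = \beta\bar{z}_{-1} + (1-\beta)\bar{g}_0$ is consistent with the convention.

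Next I would establish the recursion $\bar{d}_{k+1} = \bar{d}_k - \alpha\bar{g}_k$. Using \eqref{eq:dk_xk} at indices $k+1$ and $k$, write $\bar{d}_{k+1} - \bar{d}_k = (\bar{x}_{k+1} - \bar{x}_k) - \frac{\alpha\beta}{1-\beta}(\bar{z}_k - \bar{z}_{k-1})$. The first difference is $-\alpha\bar{y}_k = -\alpha\bar{z}_k$ by Lemma~\ref{lem:avg}. For the second, the momentum recursion gives $\bar{z}_k = \beta\bar{z}_{k-1} + (1-\beta)\bar{g}_k$, so $\bar{z}_k - \bar{z}_{k-1} = -(1-\beta)\bar{z}_{k-1} + (1-\beta)\bar{g}_k = (1-\beta)(\bar{g}_k - \bar{z}_{k-1})$; but also $\bar{z}_k - \beta\bar{z}_{k-1} = (1-\beta)\bar{g}_k$, so it is cleaner to note $\frac{\beta}{1-\beta}(\bar{z}_k - \bar{z}_{k-1})$ should combine with $-\alpha\bar{z}_k$ to leave $-\alpha\bar{g}_k$. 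Indeed, $-\alpha\bar{z}_k - \frac{\alpha\beta}{1-\beta}(\bar{z}_k - \bar{z}_{k-1}) = -\frac{\alpha}{1-\beta}\big((1-\beta)\bar{z}_k + \beta\bar{z}_k - \beta\bar{z}_{k-1}\big) = -\frac{\alpha}{1-\beta}\big(\bar{z}_k - \beta\bar{z}_{k-1}\big) = -\frac{\alpha}{1-\beta}(1-\beta)\bar{g}_k = -\alpha\bar{g}_k$, using the momentum recursion in the last line. The $k=0$ boundary case is checked separately: $\bar{d}_1 - \bar{d}_0 = \frac{1}{1-\beta}\bar{x}_1 - \frac{\beta}{1-\beta}\bar{x}_0 - \bar{x}_0 = \frac{1}{1-\beta}(\bar{x}_1 - \bar{x}_0) = \frac{-\alpha}{1-\beta}\bar{y}_0 = \frac{-\alpha}{1-\beta}(1-\beta)\bar{g}_0 = -\alpha\bar{g}_0$, using $\bar{y}_0 = \bar{z}_0 = (1-\beta)\bar{g}_0$.

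There is no serious obstacle here; the lemma is an algebraic identity. The only point requiring care is bookkeeping at the $k=0$ (and $k=1$) boundaries, where the piecewise definition of $\bar{d}_k$ and the special initialization $\bar{y}_0 = \bar{z}_0 = (1-\beta)\bar{g}_0$ must be handled explicitly rather than by the generic step; the convention $\bar{z}_{-1} = \0$ from Remark~\ref{rem:z_minus} makes the two cases uniform and is what I would use to streamline the write-up.
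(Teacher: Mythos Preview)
Your proposal is correct and follows essentially the same approach as the paper's proof: both are direct algebraic verifications using Lemma~\ref{lem:avg}, the definition~\eqref{eq:dk}, the momentum recursion $\bar{z}_k = \beta\bar{z}_{k-1} + (1-\beta)\bar{g}_k$, and the convention $\bar{z}_{-1}=\0$. The only cosmetic difference is that the paper proves the recursion $\bar{d}_{k+1}=\bar{d}_k-\alpha\bar{g}_k$ first (directly from~\eqref{eq:dk}) and then the identity~\eqref{eq:dk_xk}, whereas you prove~\eqref{eq:dk_xk} first and then leverage it to obtain the recursion; the underlying computations are the same.
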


\begin{proof}
    See Appendix \ref{app:dk}.
\end{proof}

Next, we consider the relation between $\E\brki{f(\bar{d}_k) - f^*}$ and $\E\brki{f(\bar{x}_k) - f^*}$ in light of \eqref{eq:dk_xk} in Lemma \ref{lem:dk}. As a consequence, the follow-up analysis needs not involve the term $\E\brki{f(\bar{x}_k) - f^*}$ which may appear due to Assumption \ref{as:abc}.

\begin{lemma}
    \label{lem:fx_fd}
    Let Assumptions \ref{as:smooth}-\ref{as:graph} hold. Set the stepsize $\alpha$ to satisfy $\alpha\leq (1-\beta)/(3\beta L)$. We have 
    \begin{align*}
        \E\brk{f(\bar{x}_k) - f^*} &\leq \E\brk{f(\bar{d}_k) - f^*} + \frac{\alpha\beta}{1-\beta}\E\brk{\norm{\nabla f(\bar{x}_k)}^2} + \frac{\alpha\beta}{1-\beta}\E\brk{\norm{\bar{z}_{k-1}}^2}.
    \end{align*}
\end{lemma}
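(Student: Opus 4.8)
The plan is to reduce the claim to the descent lemma for $f$, using the identity $\bar{x}_k - \bar{d}_k = \frac{\alpha\beta}{1-\beta}\bar{z}_{k-1}$ supplied by Lemma \ref{lem:dk}, and then to absorb the quadratic remainder term using the prescribed stepsize bound $\alpha\leq(1-\beta)/(3\beta L)$. Note first that $f=\frac1n\sum_i f_i$ is $L$-smooth as an average of $L$-smooth functions, so the inequality we obtain holds pathwise and the expectations are taken only at the very end.

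First I would apply the descent lemma at base point $\bar{d}_k$ with increment $\bar{x}_k$:
\begin{align*}
    f(\bar{x}_k) &\leq f(\bar{d}_k) + \inpro{\nabla f(\bar{d}_k), \bar{x}_k - \bar{d}_k} + \frac{L}{2}\norm{\bar{x}_k - \bar{d}_k}^2 .
\end{align*}
Since the follow-up analysis should feature $\nabla f(\bar{x}_k)$ rather than $\nabla f(\bar{d}_k)$, I would split $\nabla f(\bar{d}_k) = \nabla f(\bar{x}_k) + \prt{\nabla f(\bar{d}_k) - \nabla f(\bar{x}_k)}$ and bound the cross term via Cauchy--Schwarz together with $L$-smoothness, giving $\inpro{\nabla f(\bar{d}_k) - \nabla f(\bar{x}_k), \bar{x}_k - \bar{d}_k} \leq L\norm{\bar{x}_k - \bar{d}_k}^2$. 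This yields
\begin{align*}
    f(\bar{x}_k) &\leq f(\bar{d}_k) + \inpro{\nabla f(\bar{x}_k), \bar{x}_k - \bar{d}_k} + \frac{3L}{2}\norm{\bar{x}_k - \bar{d}_k}^2 .
\end{align*}

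Next I would substitute $\bar{x}_k - \bar{d}_k = \frac{\alpha\beta}{1-\beta}\bar{z}_{k-1}$ from \eqref{eq:dk_xk} (valid for all $k\geq0$ with $\bar{z}_{-1}=\0$ by Remark \ref{rem:z_minus}), apply Young's inequality with weight $\frac{\alpha\beta}{1-\beta}$ to get $\inpro{\nabla f(\bar{x}_k), \bar{x}_k - \bar{d}_k}\leq \frac{\alpha\beta}{2(1-\beta)}\norm{\nabla f(\bar{x}_k)}^2 + \frac{\alpha\beta}{2(1-\beta)}\norm{\bar{z}_{k-1}}^2$, and note $\frac{3L}{2}\norm{\bar{x}_k-\bar{d}_k}^2 = \frac{3L\alpha\beta}{2(1-\beta)}\cdot\frac{\alpha\beta}{1-\beta}\norm{\bar{z}_{k-1}}^2$. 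The stepsize condition gives $\frac{3L\alpha\beta}{1-\beta}\leq 1$, so this last term is at most $\frac{\alpha\beta}{2(1-\beta)}\norm{\bar{z}_{k-1}}^2$; collecting the two $\norm{\bar{z}_{k-1}}^2$ contributions produces the coefficient $\frac{\alpha\beta}{1-\beta}$. Subtracting $f^*$, taking expectations, and using $\frac{\alpha\beta}{2(1-\beta)}\leq\frac{\alpha\beta}{1-\beta}$ on the gradient term yields the stated bound.

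The only genuinely delicate point is the transfer from $\nabla f(\bar{d}_k)$ to $\nabla f(\bar{x}_k)$: because $f$ is not assumed convex, this step costs an extra factor of $L$ in the quadratic remainder, and the whole proof hinges on the stepsize restriction $\alpha\leq(1-\beta)/(3\beta L)$ being exactly what is needed to re-absorb that remainder into the $\frac{\alpha\beta}{1-\beta}\norm{\bar{z}_{k-1}}^2$ budget. Everything else is routine use of the descent lemma, Cauchy--Schwarz, and Young's inequality.
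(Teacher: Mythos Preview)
Your proposal is correct and follows essentially the same approach as the paper's proof: both apply the descent lemma at $\bar{d}_k$, split off $\nabla f(\bar{x}_k)$ from $\nabla f(\bar{d}_k)$, bound the resulting cross term via Cauchy--Schwarz plus $L$-smoothness, handle $\inpro{\nabla f(\bar{x}_k),\bar{z}_{k-1}}$ with Young's inequality, and then use the stepsize restriction $\alpha\leq(1-\beta)/(3\beta L)$ to absorb the quadratic remainder into the $\frac{\alpha\beta}{1-\beta}\norm{\bar{z}_{k-1}}^2$ budget. The only cosmetic difference is that the paper substitutes $\bar{x}_k-\bar{d}_k=\frac{\alpha\beta}{1-\beta}\bar{z}_{k-1}$ before bounding the cross term, arriving at the intermediate coefficient $\frac{\alpha\beta}{2(1-\beta)}\brk{1+3\alpha\beta L/(1-\beta)}$, whereas you first collect a $\frac{3L}{2}\norm{\bar{x}_k-\bar{d}_k}^2$ term and substitute afterward; the two computations are algebraically equivalent.
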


\begin{proof}
    See Appendix \ref{app:fx_fd}.
\end{proof}

Our primary goal in this section is to locate and analyze a Lyapunov function to facilitate the convergence analysis. 
We start with Lemma \ref{lem:descent_dk} that presents the recursion of $\E\brki{f(\bar{d}_k) - f^*}$ in light of Assumption \ref{as:smooth} and Lemma \ref{lem:dk}.

\begin{lemma}
    \label{lem:descent_dk}
    Let Assumptions \ref{as:smooth}-\ref{as:graph} hold. Set the stepsize $\alpha$ to satisfy $\alpha \leq \min\crki{1/(4L), 1/(2C)}$. 
    We have 
    \begin{align*}
        &\E\brk{f(\bar{d}_{k + 1}) - f^*} \leq \prt{1 + \frac{2\alpha^2 CL}{n}}\E\brk{f(\bar{d}_k) - f^*} + \frac{\alpha^3 \beta^2 L(L+2C)}{(1-\beta)^2}\E\brk{\norm{\bar{z}_{k-1}}^2}\\
        &\quad - \frac{\alpha}{2}\prt{1 - \frac{4\alpha^2 CL}{n(1-\beta)}}\E\brk{\norm{\nabla f(\bar{x}_k)}^2}+ \frac{\alpha L^2}{n}\E\brk{\norm{\Pi\x_k}^2} + \frac{\alpha^2 L(2C \sigfn + \sigma^2)}{n},
    \end{align*}
    where $\sigfn:= f^* - \frac{1}{n}\sumn f_i^*$.
\end{lemma}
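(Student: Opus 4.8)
The plan is to establish the descent inequality by starting from the $L$-smoothness of $f$ applied to the SGD-like recursion $\bar{d}_{k+1} = \bar{d}_k - \alpha\bar{g}_k$ from Lemma \ref{lem:dk}. First I would write $f(\bar{d}_{k+1}) \le f(\bar{d}_k) - \alpha\inproi{\nabla f(\bar{d}_k), \bar{g}_k} + \frac{\alpha^2 L}{2}\normi{\bar{g}_k}^2$, then take the conditional expectation $\condEi{\cdot}{\cF_k}$. Since $\bar{g}_k = \frac{1}{n}\sumn g_i(x_{i,k};\xi_{i,k})$ is conditionally unbiased with $\condEi{\bar{g}_k}{\cF_k} = \frac{1}{n}\sumn\nabla f_i(x_{i,k})$, the inner product term becomes $-\alpha\inproi{\nabla f(\bar{d}_k), \frac{1}{n}\sumn\nabla f_i(x_{i,k})}$. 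The key technical point is that $\frac{1}{n}\sumn\nabla f_i(x_{i,k})$ differs from $\nabla f(\bar{x}_k)$ by a term controlled by the consensus error $\normi{\Pi\x_k}$ (via $L$-smoothness), and $\nabla f(\bar{d}_k)$ differs from $\nabla f(\bar{x}_k)$ by a term controlled by $\normi{\bar{d}_k - \bar{x}_k} = \frac{\alpha\beta}{1-\beta}\normi{\bar{z}_{k-1}}$ from \eqref{eq:dk_xk}. Repeated application of Young's inequality $2\inproi{a,b} \le \gamma\normi{a}^2 + \gamma^{-1}\normi{b}^2$ with carefully chosen weights converts these into the $\normi{\nabla f(\bar{x}_k)}^2$, $\normi{\Pi\x_k}^2$, and $\normi{\bar{z}_{k-1}}^2$ terms appearing in the statement.

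Next I would handle the second-moment term $\condEi{\normi{\bar{g}_k}^2}{\cF_k}$. Using independence of the stochastic gradients across agents (Assumption \ref{as:abc}), the variance decomposes so that $\condEi{\normi{\bar{g}_k - \frac{1}{n}\sumn\nabla f_i(x_{i,k})}^2}{\cF_k} = \frac{1}{n^2}\sumn\condEi{\normi{g_i(x_{i,k};\xi_{i,k}) - \nabla f_i(x_{i,k})}^2}{\cF_k} \le \frac{1}{n^2}\sumn\prti{C[f_i(x_{i,k}) - \uf_i] + \sigma^2}$ by the ABC condition \eqref{eq:abc}. The sum $\frac{1}{n}\sumn[f_i(x_{i,k}) - \uf_i]$ must then be bounded in terms of $f(\bar{x}_k) - f^*$ plus consensus-error corrections: one writes $f_i(x_{i,k}) - \uf_i = [f_i(x_{i,k}) - f_i(\bar{x}_k)] + [f_i(\bar{x}_k) - f_i^*] + [f_i^* - \uf_i]$, uses $L$-smoothness (a descent-lemma-type bound) to control the first bracket by $\normi{x_{i,k} - \bar{x}_k}^2$ and $\normi{\nabla f_i(\bar{x}_k)}^2$, sums over $i$ so that $\frac{1}{n}\sumn[f_i(\bar{x}_k) - f_i^*] = f(\bar{x}_k) - \frac{1}{n}\sumn f_i^* = [f(\bar{x}_k) - f^*] + \sigfn$, and collects the leftover gradient term into $\normi{\nabla f(\bar{x}_k)}^2$-like quantities (after again passing from $\bar{x}_k$ to $\bar{d}_k$ where needed). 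This is what produces the $\frac{2\alpha^2 CL}{n}\E[f(\bar{d}_k) - f^*]$ growth factor and the $\frac{\alpha^2 L(2C\sigfn + \sigma^2)}{n}$ additive term, together with extra multiples of $\frac{\alpha^2 L}{n}\normi{\Pi\x_k}^2$ and $\frac{\alpha^2 L}{n}\normi{\nabla f(\bar{x}_k)}^2$.

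Finally I would assemble all the pieces: combine the first-order and second-order contributions, take full expectations, and impose the stepsize restriction $\alpha \le \min\crki{1/(4L), 1/(2C)}$ to absorb the $\order{\alpha^2}$ corrections of $\normi{\nabla f(\bar{x}_k)}^2$ into the dominant $-\frac{\alpha}{2}\normi{\nabla f(\bar{x}_k)}^2$ term, leaving the $\prti{1 - \frac{4\alpha^2 CL}{n(1-\beta)}}$ factor, and to ensure coefficients on $\normi{\Pi\x_k}^2$ and $\normi{\bar{z}_{k-1}}^2$ collapse to $\frac{\alpha L^2}{n}$ and $\frac{\alpha^3\beta^2 L(L+2C)}{(1-\beta)^2}$ respectively; the condition $\alpha \le 1/(4L)$ in particular lets $\frac{\alpha^2 L}{2} \le \frac{\alpha}{8}$-type estimates control the quadratic-in-$\alpha$ pieces. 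The main obstacle I anticipate is the careful bookkeeping of which gradient-norm and consensus-error corrections arise at each step and verifying that, after the stepsize bound, their accumulated coefficients match exactly the (fairly tight) constants stated — especially tracking the passage back and forth between $\nabla f(\bar{d}_k)$, $\nabla f(\bar{x}_k)$, and $\frac{1}{n}\sumn\nabla f_i(x_{i,k})$ without losing an extra factor of $n$ or $(1-\beta)$. The stochastic-gradient and smoothness inputs themselves are routine; the delicacy is entirely in the constant-chasing.
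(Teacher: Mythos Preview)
Your proposal is essentially the same approach as the paper's proof: start from the smoothness inequality on $\bar{d}_{k+1}=\bar{d}_k-\alpha\bar{g}_k$, split the inner product through $\nabla f(\bar{x}_k)$ and $\frac{1}{n}\sumn\nabla f_i(x_{i,k})$, control the variance via the ABC condition and independence across agents, and finally invoke Lemma~\ref{lem:fx_fd} to replace $f(\bar{x}_k)-f^*$ by $f(\bar{d}_k)-f^*$ plus the $\normi{\nabla f(\bar{x}_k)}^2$ and $\normi{\bar{z}_{k-1}}^2$ corrections. One small clarification: the ``leftover gradient term'' $\frac{1}{n}\sumn\normi{\nabla f_i(\bar{x}_k)}^2$ arising from the bound on $f_i(x_{i,k})-f_i^*$ cannot be collected into $\normi{\nabla f(\bar{x}_k)}^2$ directly; the paper instead uses $\frac{1}{2L}\normi{\nabla f_i(\bar{x}_k)}^2\le f_i(\bar{x}_k)-f_i^*$ to fold it back into the function-value term, yielding $f_i(x_{i,k})-f_i^*\le 2\prti{f_i(\bar{x}_k)-f_i^*}+L\normi{x_{i,k}-\bar{x}_k}^2$, which is what produces the factor $2C$ in $2C\sigfn$ and the coefficient $\frac{2\alpha^2 CL}{n}$.
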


\begin{proof}
    See Appendix \ref{app:descent_dk}.
\end{proof}

Lemma \ref{lem:descent_dk} inspires us to consider the recursion of $\E\brki{\normi{\bar{z}_k}^2}$ as follows.

\begin{lemma}
    \label{lem:zk}
    Let Assumptions \ref{as:smooth}-\ref{as:graph} hold. Set the stepsize $\alpha$ to satisfy $\alpha\leq \min\crki{(1-\beta)/(2\beta L), 1/(4C\beta)}$. We have 
    \begin{align*}
        &\E\brk{\norm{\bar{z}_k}^2}
        \leq \frac{1 + \beta}{2}\E\brk{\norm{\bar{z}_{k-1}}^2} + \frac{2C(1-\beta)^2 }{n}\E\brk{f(\bar{d}_k)-f^*}\\
        &\quad + 3(1-\beta)\E\brk{\norm{\nabla f(\bar{x}_k)}^2}+ \frac{(1-\beta)^2(2C\sigfn + \sigma^2)}{n} \\
        &\quad + \frac{2(1-\beta)L}{n}\prt{L + \frac{C(1-\beta)}{n}}\E\brk{\norm{\Pi\x_k}^2}.
    \end{align*}
\end{lemma}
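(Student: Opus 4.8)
The plan is to start from the averaged recursion $\bar{z}_k = \beta\bar{z}_{k-1} + (1-\beta)\bar{g}_k$, which follows from Lemma~\ref{lem:avg} after an index shift, and to split $\bar{g}_k$ into its conditional mean and the stochastic noise. Note that $\bar{z}_{k-1}$ and $\x_k$ are $\cF_k$-measurable, whereas Assumption~\ref{as:abc} gives $\condEi{\bar{g}_k}{\cF_k} = \frac{1}{n}\sumn\nabla f_i(x_{i,k})$. Setting $u := \beta\bar{z}_{k-1} + \frac{1-\beta}{n}\sumn\nabla f_i(x_{i,k})$ and $v := \bar{z}_k - u$, the cross term vanishes upon conditioning on $\cF_k$ (since $\condEi{v}{\cF_k}=\0$ and $u$ is $\cF_k$-measurable), so $\condEi{\norm{\bar{z}_k}^2}{\cF_k} = \norm{u}^2 + \condEi{\norm{v}^2}{\cF_k}$. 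Convexity of $\norm{\cdot}^2$ gives $\norm{u}^2 \leq \beta\norm{\bar{z}_{k-1}}^2 + (1-\beta)\normi{\frac{1}{n}\sumn\nabla f_i(x_{i,k})}^2$, and since the residuals $g_i(x_{i,k};\xi_{i,k}) - \nabla f_i(x_{i,k})$ are independent across $i$ with zero conditional mean, the ABC bound~\eqref{eq:abc} yields $\condEi{\norm{v}^2}{\cF_k} = \frac{(1-\beta)^2}{n^2}\sumn\condEi{\norm{g_i(x_{i,k};\xi_{i,k}) - \nabla f_i(x_{i,k})}^2}{\cF_k} \leq \frac{C(1-\beta)^2}{n}\cdot\frac{1}{n}\sumn\brk{f_i(x_{i,k}) - f_i^*} + \frac{(1-\beta)^2\sigma^2}{n}$.

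Next I would rewrite the two aggregate quantities appearing above in terms of the consensus error $\norm{\Pi\x_k}^2$, the gradient $\norm{\nabla f(\bar{x}_k)}^2$, and the function gap at $\bar{d}_k$. For the first, using $\nabla f(\bar{x}_k) = \frac{1}{n}\sumn\nabla f_i(\bar{x}_k)$, Young's inequality, $L$-smoothness of each $f_i$, and Cauchy--Schwarz gives $\normi{\frac{1}{n}\sumn\nabla f_i(x_{i,k})}^2 \leq 2\norm{\nabla f(\bar{x}_k)}^2 + \frac{2L^2}{n}\norm{\Pi\x_k}^2$. For the second, I would invoke the elementary inequality $f_i(y) - f_i^* \leq 2(f_i(x) - f_i^*) + L\norm{y-x}^2$ --- a consequence of the descent inequality together with $\norm{\nabla f_i(x)}^2 \leq 2L(f_i(x) - f_i^*)$ --- with $y = x_{i,k}$ and $x = \bar{d}_k$; combining with $\norm{x_{i,k} - \bar{d}_k}^2 \leq 2\norm{x_{i,k} - \bar{x}_k}^2 + 2\norm{\bar{x}_k - \bar{d}_k}^2$ and the identity $\bar{d}_k - \bar{x}_k = -\frac{\alpha\beta}{1-\beta}\bar{z}_{k-1}$ from Lemma~\ref{lem:dk}, summing over $i$ gives $\frac{1}{n}\sumn\brk{f_i(x_{i,k}) - f_i^*} \leq 2\brk{f(\bar{d}_k) - f^*} + 2\sigfn + \frac{2L}{n}\norm{\Pi\x_k}^2 + \frac{2L\alpha^2\beta^2}{(1-\beta)^2}\norm{\bar{z}_{k-1}}^2$, where $\sigfn = f^* - \frac{1}{n}\sumn f_i^*$.

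Finally I would take total expectations (tower property), substitute these two bounds, and collect like terms. The coefficient of $\E\brk{\norm{\bar{z}_{k-1}}^2}$ becomes $\beta + \frac{2CL\alpha^2\beta^2}{n}$; the restriction $\alpha\leq(1-\beta)/(2\beta L)$ gives $L\alpha\beta\leq(1-\beta)/2$ and $\alpha\leq1/(4C\beta)$ gives $C\alpha\beta\leq1/4$, hence $\frac{2CL\alpha^2\beta^2}{n}\leq\frac{1-\beta}{4n}\leq\frac{1-\beta}{4}$ and the coefficient is at most $\frac{1+\beta}{2}$. The gradient term carries coefficient $2(1-\beta)\leq 3(1-\beta)$; the two consensus contributions combine to $\frac{2(1-\beta)L^2}{n}+\frac{2CL(1-\beta)^2}{n^2} = \frac{2(1-\beta)L}{n}\prt{L+\frac{C(1-\beta)}{n}}$; the function-gap term is $\frac{2C(1-\beta)^2}{n}\E\brk{f(\bar{d}_k)-f^*}$; and the remaining constants sum to $\frac{(1-\beta)^2(2C\sigfn+\sigma^2)}{n}$, which is exactly the claimed bound. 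The only real difficulty is the bookkeeping around $\norm{\bar{z}_{k-1}}^2$: this is precisely where the extra factor $(1-\beta)$ in the momentum update~\eqref{eq:z} --- which produces the $(1-\beta)^2$ multiplying the variance --- together with the two stepsize conditions are essential, as they pull the coefficient from $\beta$ down below $\frac{1+\beta}{2}$ and make the recursion contractive.
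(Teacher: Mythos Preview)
Your proof is correct and follows essentially the same structure as the paper's: the same decomposition $\bar z_k=\beta\bar z_{k-1}+(1-\beta)\bar g_k$, separation into conditional mean and noise, Jensen/Young on the mean part, and the ABC bound on the variance. The only minor variation is that you bound $\frac{1}{n}\sumn\brk{f_i(x_{i,k})-f_i^*}$ by centering directly at $\bar d_k$ (via $\norm{x_{i,k}-\bar d_k}^2\le 2\norm{x_{i,k}-\bar x_k}^2+2\norm{\bar x_k-\bar d_k}^2$), whereas the paper first centers at $\bar x_k$ using \eqref{eq:fxik} and then invokes Lemma~\ref{lem:fx_fd} to pass to $\bar d_k$; both routes lead to the stated bound and use the two stepsize conditions in the same way to absorb the extra $\norm{\bar z_{k-1}}^2$ term.
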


\begin{proof}
    See Appendix \ref{app:zk}.
\end{proof}

We next consider the recursion of $\E\brki{\normi{\z_k - \nabla F(\Barx{k})}^2}$. 
\begin{lemma}
    \label{lem:zk_nf}
    Let Assumptions \ref{as:smooth}-\ref{as:graph} hold. Set the stepsize $\alpha$ to satisfy $\alpha\leq (1-\beta)/(3L)$. We have for any $k\geq 0$ that
    \begin{align*}
        &\E\brk{\norm{\z_k - \nabla F(\Barx{k})}^2}
       \leq \beta\E\brk{\norm{\z_{k - 1} - \nabla F(\Barx{k-1})}^2} + 2\alpha n(L+C)\E\brk{\norm{\bar{z}_{k - 1}}^2}\\
       &\quad + 2(1-\beta)L(L+C)\E\brk{\norm{\Pi\x_k}^2} + 2n(1-\beta)^2C\E\brk{f(\bar{d}_k) - f^*}\\
       &\quad + 2\alpha n(1-\beta)C\E\brk{\norm{\nabla f(\bar{x}_k)}^2} + n(1-\beta)^2(2C\sigfn + \sigma^2),
    \end{align*} 
    where $\x_{-1}:= \x_0$ and $\z_{-1}=\0$.
\end{lemma}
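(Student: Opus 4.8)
The plan is to start from the update rule \eqref{eq:z} for $\z_k$, namely $\z_{k+1} = \beta\z_k + (1-\beta)\g_{k+1}$, together with the definition $\z_{-1} = \0$ from Remark~\ref{rem:z_minus} and the convention $\x_{-1} := \x_0$, and to rewrite the quantity $\z_k - \nabla F(\Barx{k})$ in a form that isolates a contraction term and a zero-mean stochastic term. First I would write
\begin{align*}
    \z_k - \nabla F(\Barx{k}) &= \beta\z_{k-1} + (1-\beta)\g_k - \nabla F(\Barx{k})\\
    &= \beta\bigl(\z_{k-1} - \nabla F(\Barx{k-1})\bigr) + \beta\bigl(\nabla F(\Barx{k-1}) - \nabla F(\Barx{k})\bigr)\\
    &\quad + (1-\beta)\bigl(\g_k - \nabla F(\x_k)\bigr) + (1-\beta)\bigl(\nabla F(\x_k) - \nabla F(\Barx{k})\bigr).
\end{align*}
The first term gives the contraction with factor $\beta$; the third term is conditionally zero-mean given $\cF_k$ (since $\x_k$ is $\cF_k$-measurable and, by Assumption~\ref{as:abc}, $\condEi{\g_k - \nabla F(\x_k)}{\cF_k} = 0$) and its squared norm is controlled by the ABC condition~\eqref{eq:abc}; the remaining two terms are ``deterministic'' perturbations that will be bounded by $L$-smoothness.

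Next I would take conditional expectation $\condEi{\cdot}{\cF_k}$ of the squared Frobenius norm. The cross term between the $\cF_k$-measurable part (first, second, fourth terms) and the zero-mean part (third term) vanishes in expectation, so I am left with the squared norm of the $\cF_k$-measurable part plus $(1-\beta)^2\condEi{\normi{\g_k - \nabla F(\x_k)}^2}{\cF_k}$. For the latter I sum \eqref{eq:abc} over $i\in\cN$ to get the bound $C\sum_i (f_i(x_{i,k}) - \uf_i) + n\sigma^2$; this agent-wise sum of function-value gaps then needs to be converted into $\E[f(\bar{d}_k) - f^*]$, $\E\normi{\nabla f(\bar{x}_k)}^2$, $\E\normi{\Pi\x_k}^2$, and $\E\normi{\bar{z}_{k-1}}^2$ terms — this is exactly the reduction that was prepared by Lemma~\ref{lem:dk} (relating $\bar{d}_k$ and $\bar{x}_k$ via $\bar{z}_{k-1}$), Lemma~\ref{lem:fx_fd}, and the $L$-smoothness bound $f_i(x_{i,k}) - \uf_i \le 2(f_i(x_{i,k}) - f_i(\bar{x}_k)) + 2(f_i(\bar{x}_k) - \uf_i)$ combined with $\sum_i (f_i(\bar{x}_k) - \uf_i) = n(f(\bar{x}_k) - f^*) + n\sigfn$ and $\sum_i(f_i(x_{i,k}) - f_i(\bar{x}_k)) \le \inproi{\nabla F(\Barx{k}), \x_k - \Barx{k}} + \tfrac{L}{2}\normi{\Pi\x_k}^2$ (or a Young's-inequality variant thereof). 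For the $\cF_k$-measurable part, I would use Young's inequality with weights tuned to $\beta$ — writing $\normi{a + b + c}^2 \le \beta^{-1}\normi{\beta a}^2/\beta + \dots$, more precisely $\|\beta u + v\|^2 \le \beta\|u\|^2 + \tfrac{1}{1-\beta}\|v\|^2$ applied with $u = \z_{k-1} - \nabla F(\Barx{k-1})$ and $v$ the sum of the two smoothness perturbation terms — so that the contraction factor stays $\beta$ and the perturbation $v$ is paid for by a $\tfrac{1}{1-\beta}$ factor that cancels against the $(1-\beta)$ and $(1-\beta)^2$ factors multiplying $\normi{\nabla F(\Barx{k-1}) - \nabla F(\Barx{k})}^2$ and $\normi{\nabla F(\x_k) - \nabla F(\Barx{k})}^2$.

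The two smoothness perturbations are then estimated as follows: $\normi{\nabla F(\x_k) - \nabla F(\Barx{k})}^2 \le L^2\normi{\x_k - \Barx{k}}^2 = L^2\normi{\Pi\x_k}^2$ directly, and $\normi{\nabla F(\Barx{k-1}) - \nabla F(\Barx{k})}^2 \le nL^2\normi{\bar{x}_{k-1} - \bar{x}_k}^2 = nL^2\alpha^2\normi{\bar{y}_{k-1}}^2 = nL^2\alpha^2\normi{\bar{z}_{k-1}}^2$ using Lemma~\ref{lem:avg} ($\bar{x}_k - \bar{x}_{k-1} = -\alpha\bar{y}_{k-1}$ and $\bar{y}_{k-1} = \bar{z}_{k-1}$). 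Collecting everything, taking total expectation, and absorbing the $\alpha^2$ factors using the stepsize restriction $\alpha \le (1-\beta)/(3L)$ should produce exactly the stated recursion with coefficients $\beta$, $2\alpha n(L+C)$, $2(1-\beta)L(L+C)$, $2n(1-\beta)^2 C$, $2\alpha n(1-\beta)C$, and $n(1-\beta)^2(2C\sigfn + \sigma^2)$. I expect the main obstacle to be the bookkeeping in converting the agent-wise ABC bound $\sum_i C(f_i(x_{i,k}) - \uf_i)$ into the four target quantities while keeping the $n$-dependence and the $(1-\beta)$-dependence of each coefficient correct — in particular making sure the $\E[f(\bar{x}_k) - f^*]$ term produced along the way is routed through Lemma~\ref{lem:fx_fd} so that only $\E[f(\bar{d}_k) - f^*]$, $\E\normi{\nabla f(\bar{x}_k)}^2$, and $\E\normi{\bar{z}_{k-1}}^2$ survive, and verifying that the stepsize condition is strong enough to swallow all the residual higher-order-in-$\alpha$ terms into the listed ones.
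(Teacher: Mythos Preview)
Your proposal is correct and follows essentially the same route as the paper: the same four-term decomposition of $\z_k-\nabla F(\Barx{k})$, separation of the zero-mean stochastic piece, the weighted Young/Jensen inequality $\|\beta u+v\|^2\le\beta\|u\|^2+\tfrac{1}{1-\beta}\|v\|^2$ to preserve the contraction factor, the smoothness bounds $L^2\normi{\Pi\x_k}^2$ and $\alpha^2 nL^2\normi{\bar{z}_{k-1}}^2$, the ABC bound combined with the agent-wise estimate (paper's \eqref{eq:fi_f}/\eqref{eq:fxik}), and finally Lemma~\ref{lem:fx_fd} to pass from $f(\bar{x}_k)$ to $f(\bar{d}_k)$. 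The only slip is the inequality you wrote as $f_i(x_{i,k})-\uf_i\le 2(f_i(x_{i,k})-f_i(\bar{x}_k))+2(f_i(\bar{x}_k)-\uf_i)$, which as stated is trivial; what you actually need (and what the paper uses) is $f_i(x_{i,k})-\uf_i\le 2(f_i(\bar{x}_k)-\uf_i)+L\normi{x_{i,k}-\bar{x}_k}^2$, obtained from the descent lemma plus $\normi{\nabla f_i(\bar{x}_k)}^2\le 2L(f_i(\bar{x}_k)-\uf_i)$.
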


\begin{proof}
    See Appendix \ref{app:zk_cons}.
\end{proof}

Next, we consider the consensus error terms $\E\brki{\normi{\Pi\x_k}^2}$ and $\E\brki{\normi{\Pi\y_k}^2}$ similar to those in \cite{song2021optimal}.
One of the key steps in the derivation of Lemma \ref{lem:cons_R} involves identifying the error term $\E\brki{\normi{\z_k - \nabla F(\Barx{k})}^2}$, as discussed in Subsection \ref{subsec:motivation}. This particular step is essential for preserving the benefits of the gradient tracking technique, which reduces the impact of data heterogeneity.

\begin{lemma}
    \label{lem:cons_R}
    Let Assumptions \ref{as:smooth}-\ref{as:graph} hold. Set the stepsize $\alpha$ and the parameter $\beta$ to satisfy 
    \begin{align*}
        \alpha\leq\min\crk{\frac{1}{2(L+C)}, \sqrt{\frac{1-\trw}{6c_0^2L(L+C)}}, \sqrt{\frac{1-\beta}{4 CL}}},\; \qquad 1>\beta\geq \trw.
    \end{align*} 
    Then, there exist sequences $\crki{\cR_k^x}$ and $\crki{\cR_k^y}$ such that 
    \begin{align*}
        &\E\brk{\norm{\Pi\x_k}^2}\leq \E\brk{\norm{\tpx_k}^2}\leq \cR_k^x,\\
        & \E\brk{\norm{\Pi\y_k}^2}\leq \E\brk{\norm{\tpy_k}^2}\leq \cR_k^y,
    \end{align*}
    and 
    \begin{align}
        \cR_{k + 1}^x &\leq \trw\cR_k^x + \frac{\alpha^2 c_0\trw}{1-\trw}\cR_k^y,\label{eq:Rx_re}\\
        \cR_{k+1}^y &\leq \frac{1+\trw}{2}\cR_k^y + \frac{15c_0(1-\beta)^2L(L+C)}{1-\trw}\cR_k^x +9\alpha n c_0(1-\beta)(C+L)\E\brk{\norm{\bar{z}_{k - 1}}^2}\nonumber\\
        &\quad + \frac{3c_0(1-\beta)^2\beta}{1-\trw}\E\brk{\norm{\z_{k - 1} - \nabla F(\Barx{k-1})}^2} + 12nc_0(1-\beta)^2C\E\brk{f(\bar{d}_k) - f^*}\nonumber\\
        &\quad + 20\alpha n c_0(1-\beta)(C+L)\E\brk{\norm{\nabla f(\bar{x}_k)}^2}+ 6nc_0(1-\beta)^2(2C\sigfn + \sigma^2),\label{eq:Ry_re}
    \end{align}
    where $c_0 = 14$ is defined in Lemma \ref{lem:lca} and
    \begin{equation}
        \label{eq:cR0}
        \begin{aligned}
            \cR_0^x &\leq c_0\norm{\Pi\x_0}^2 + \frac{\alpha^2 c_0\trw}{1-\trw}\E\brk{\norm{\Pi\y_0}^2},\\
            \cR_0^y &:= c_0\E\brk{\norm{\Pi\y_0}^2} + nc_0\trw(1-\beta)^2(2C\sigfn + \sigma^2).
        \end{aligned}
    \end{equation}
\end{lemma}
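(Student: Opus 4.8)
The plan is to derive the consensus recursions for $\x_k$ and $\y_k$ directly from the augmented updates \eqref{eq:tx} and \eqref{eq:ty} together with the LCA contraction in Lemma \ref{lem:lca}, and then bound the resulting driving terms using the earlier preliminary lemmas. First I would apply $\tpi$ to both sides of \eqref{eq:tx}. Since $\tW$ commutes with $\tpi$ on the relevant subspace (the averaging directions are invariant), I write $\tpx_{k+1} = \tpi\tW\tpi(\tx_k - \alpha\ysh{k})$, iterate this relation back to step $0$, and use $\norm{\tpi\tW^k\tpi A_\#}^2 \leq c_0\trw^{2k}\norm{\Pi A}^2$. Care is needed because $\ysh{k} = (\y_k^\T,\y_k^\T)^\T$ is already in ``$\#$'' form; the telescoped sum involves terms $\tpi\tW^{k-j}\tpi(\alpha\ysh{j})_{\text{contribution}}$, which the lemma controls in terms of $\norm{\Pi\y_j}^2$. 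Grouping the geometric weights $\trw^{k-j}$ and applying Young's/Cauchy–Schwarz with a carefully chosen split (so that $\sum_j \trw^{k-j} = 1/(1-\trw)$ appears exactly once) should yield \eqref{eq:Rx_re} with the factor $\alpha^2 c_0\trw/(1-\trw)$ multiplying $\cR_k^y$, and the stated $\cR_0^x$.

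Next I would do the analogous computation for $\y_k$. Applying $\tpi$ to \eqref{eq:ty} gives $\tpy_{k+1} = \tpi\tW\tpi\bigl(\ty_k + \zsh{k+1} - \zsh{k}\bigr)$, so that $\tpy_k$ is a geometric-weighted accumulation of the increments $\tpi\tW\tpi(\z_{j+1}-\z_j)_\#$. Here the key algebraic move — the one flagged in Subsection \ref{subsec:motivation} — is to rewrite $\z_{k+1} - \z_k$ using \eqref{eq:z} as $-(1-\beta)(\z_k - \nabla F(\x_{k+1})) + (1-\beta)(\g_{k+1} - \nabla F(\x_{k+1}))$, and then to further split $\z_k - \nabla F(\x_{k+1})$ into $\z_k - \nabla F(\Barx{k})$ plus $\nabla F(\Barx{k}) - \nabla F(\x_{k+1})$. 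The first piece is exactly the tracked-error term $\E\normi{\z_k - \nabla F(\Barx{k})}^2$ whose recursion is given by Lemma \ref{lem:zk_nf}; the second, via $L$-smoothness, is controlled by $\norm{\Pi\x_{k+1}}^2$ plus the drift $\norm{\x_{k+1}-\Barx{k}}$, which in turn expands (using $\x_{k+1} = [\tW(\tx_k - \alpha\ysh{k})]_{1:n}$ and Lemma \ref{lem:avg}) into $\norm{\Pi\x_k}^2$, $\alpha^2 n\norm{\bar z_k}^2$-type terms, and the consensus of $\y_k$. The stochastic part $\g_{k+1} - \nabla F(\x_{k+1})$ is mean-zero given $\cF_{k+1}$, so after taking expectations its cross terms vanish and its square is bounded by the ABC condition (Assumption \ref{as:abc}), producing $\sum_i(C[f_i(x_{i,k+1}) - f_i^*] + \sigma^2)$; this is converted to $\E[f(\bar d_{k+1}) - f^*]$, $\sigfn$, consensus, and $\norm{\nabla f(\bar x_k)}^2$ terms using smoothness and Lemma \ref{lem:fx_fd}/\ref{lem:dk}. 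Collecting everything and using $\beta\geq\trw$ to absorb the per-step factor into $(1+\trw)/2$ yields \eqref{eq:Ry_re}.

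The stepsize restrictions are then exactly what is needed to make the bookkeeping close: $\alpha \leq 1/(2(L+C))$ tames the smoothness/ABC cross terms, $\alpha^2 \leq (1-\trw)/(6c_0^2 L(L+C))$ ensures the $\cR^x\!\leftrightarrow\!\cR^y$ coupling does not blow up the geometric factors, and $\alpha^2 \leq (1-\beta)/(4CL)$ keeps the $f(\bar d_k)-f^*$ contributions at the right order. I would also need to verify the base cases: $\cR_0^x$ and $\cR_0^y$ come from the initialization $\y_0 = \z_0 = (1-\beta)\g_0$, using $\E\normi{\z_0 - \nabla F(\Barx{0})}^2 \leq$ (something of order $n(1-\beta)^2(2C\sigfn + \sigma^2)$) after one application of the ABC condition, which accounts for the extra additive term in $\cR_0^y$.

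The main obstacle I anticipate is the $\y$-recursion: unlike in plain DSGT, the increment $\z_{k+1}-\z_k$ is not simply $\g_{k+1}-\g_k$, so one must carefully expand it through \eqref{eq:z} and \eqref{eq:dsmt_zk0}, separate the genuinely stochastic (mean-zero) part from the deterministic drift, and route each piece to the correct one of the previously established recursions (Lemmas \ref{lem:zk}, \ref{lem:zk_nf}) without losing a factor of $1/(1-\lambda)$ or $1/(1-\beta)$. In particular, keeping the $(1-\beta)^2$ factors attached to the right terms — so that later they can be traded against the spectral gap when $\beta$ is chosen close to $\trw$ — is the delicate part; everything else is Young's inequality, the geometric-sum identity $\sum_{j}\trw^{j} = 1/(1-\trw)$, and substitution of the earlier lemmas.
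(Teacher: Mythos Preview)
Your proposal is correct and follows essentially the same approach as the paper's proof: iterate the projected augmented updates using $\tpi\tW=\tpi\tW\tpi$ with a Young-type split (the paper uses parameter $q=\trw$) to define the geometric-sum majorants $\cR_k^x,\cR_k^y$, and for the $\y$-part use exactly the decomposition $\z_{k+1}-\z_k=-(1-\beta)(\z_k-\nabla F(\Barx{k}))+(1-\beta)(\g_{k+1}-\nabla F(\x_{k+1}))+(1-\beta)[\nabla F(\x_{k+1})-\nabla F(\Barx{k+1})]+(1-\beta)[\nabla F(\Barx{k+1})-\nabla F(\Barx{k})]$, then invoke Lemmas \ref{lem:fx_fd}--\ref{lem:zk_nf} to close the recursion. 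The only cosmetic difference is that the paper splits $\nabla F(\Barx{k})-\nabla F(\x_{k+1})$ directly into the consensus piece $\nabla F(\Barx{k+1})-\nabla F(\x_{k+1})$ and the drift $\nabla F(\Barx{k})-\nabla F(\Barx{k+1})$ (bounded by $\alpha^2 nL^2\|\bar z_k\|^2$), then handles $\|\Pi\x_{k+1}\|^2$ via the already-established bound $\cR_{k+1}^x\leq\trw\cR_k^x+\tfrac{\alpha^2c_0\trw}{1-\trw}\cR_k^y$, rather than expanding $\x_{k+1}$ through the $\tW$-update as you suggest; this avoids the extra ``consensus of $\y_k$'' detour and is slightly cleaner bookkeeping, but yields the same recursion.
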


\begin{proof}
    See Appendix \ref{app:cons_R}.
\end{proof}

We are now ready to introduce the Lyapunov function $\cL_k$ defined as follows:
\begin{equation}
    \label{eq:lya0}
    \begin{aligned}
        \cL_k&:=\E\brk{f(\bar{d}_k) - f^*} + \frac{3\alpha L^2}{n(1-\trw)} \cR_k^x + \frac{12\alpha^3c_0\trw L^2}{n(1-\trw)^3} \cR_k^y\\
        &\quad + \frac{4\alpha^3 L(L+2C)}{(1-\beta)^3}\E\brk{\norm{\bar{z}_{k-1}}^2} + \frac{48\alpha^3c_0^2\trw L^2}{n(1-\trw)^3}\E\brk{\norm{\z_{k-1} - \nabla F(\Barx{k-1})}^2}.
    \end{aligned}
\end{equation}
Lemma \ref{lem:lya} below presents the recursion for $\cL_k$ by combining the results in Lemmas \ref{lem:descent_dk}-\ref{lem:cons_R}.

\begin{lemma}
    \label{lem:lya}
    Let Assumptions \ref{as:smooth}-\ref{as:graph} hold. Set the stepsize $\alpha$ and the parameter $\beta$ to satisfy 
    \begin{align*}
        &  \alpha\leq\min\crk{\sqrt{\frac{(1-\trw)^2}{240c_0^2L(L+C)}},\frac{1-\beta}{24 (L+C)}, \prt{\frac{(1-\trw)^3}{4032 c_0^2(1-\beta)L^2(C+L)}}^{1/3}},\\ 
        & 1 > \beta\geq\trw.
    \end{align*} 
    We have 
    \begin{align*}
        &\cL_{k+1}\leq \prt{1 + \frac{4\alpha^2 CL}{n} + \frac{240\alpha^3c_0^2\trw CL^2(1-\beta)^2}{(1-\trw)^3}}\cL_k - \frac{\alpha}{4}\E\brk{\norm{\nabla f(\bar{x}_k)}^2}\\
        &\quad + \frac{\alpha^2 L(2C\sigfn + \sigma^2)}{n}+ \brk{\frac{72c_0^2\trw L^2(1-\beta)^2}{(1-\trw)^3} + \frac{4L(L+2C)}{n(1-\beta)}}\alpha^3(2C\sigfn + \sigma^2).
    \end{align*}

    Moreover, denote $\Delta_0:= f(\bar{x}_0) - f^*$, the term $\cL_0$ can be upper bounded by 
    \begin{equation}
        \label{eq:cL0_bound}
        \begin{aligned}
            \cL_0 &= \Delta_0 + \frac{3\alpha L^2}{n(1-\trw)}\cR_0^x + \frac{12\alpha^3c_0\trw L^2}{n(1-\trw)^3}\cR_0^y+ \frac{48\alpha^3c_0^2\trw L^2}{n(1-\trw)^3}\norm{\nabla F(\x_0)}^2\\
            &\leq 2\Delta_0 + \frac{9\alpha L^2\norm{\Pi\x_0}^2}{n(1-\trw)} + \frac{27\alpha^3 c_0^2L^2(1-\beta)^2(2C\sigfn + \sigma^2)}{(1-\trw)^2}\\
            &\quad + \frac{72\alpha^3c_0^2 L^2\sumn\norm{\nabla f_i(\bar{x}_0)}^2}{n(1-\trw)^3}.
        \end{aligned}
    \end{equation}
\end{lemma}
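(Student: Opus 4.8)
\textbf{Proof proposal for Lemma \ref{lem:lya}.}

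The plan is to assemble the recursion for $\cL_k$ by taking a suitable nonnegative linear combination of the five recursions already established (Lemmas \ref{lem:descent_dk}, \ref{lem:zk}, \ref{lem:zk_nf}, and the two inequalities \eqref{eq:Rx_re}--\eqref{eq:Ry_re} in Lemma \ref{lem:cons_R}), with the coefficients chosen precisely as the weights appearing in the definition \eqref{eq:lya0} of $\cL_k$. First I would write out $\cL_{k+1}$ term by term, substituting the upper bound for each constituent at index $k+1$ in terms of quantities at index $k$. The main structural point is that the weights in \eqref{eq:lya0} are engineered so that the ``cross terms'' telescope in the right direction: the $\cR_k^x$ produced by the $\cR_{k+1}^y$ recursion must be absorbed into the $\cR_k^x$ coefficient via the contraction factor $\trw$ in \eqref{eq:Rx_re}; the $\E\normi{\Pi\x_k}^2$ terms generated in Lemmas \ref{lem:descent_dk}, \ref{lem:zk}, \ref{lem:zk_nf} must be dominated by the $\tfrac{3\alpha L^2}{n(1-\trw)}\cR_k^x$ budget (using $\E\normi{\Pi\x_k}^2\le \cR_k^x$); the $\E\normi{\bar z_{k-1}}^2$ and $\E\normi{\z_{k-1}-\nabla F(\Barx{k-1})}^2$ terms generated everywhere must be dominated by their respective coefficients $\tfrac{4\alpha^3 L(L+2C)}{(1-\beta)^3}$ and $\tfrac{48\alpha^3 c_0^2\trw L^2}{n(1-\trw)^3}$ after one step of their own contractions (factor $\tfrac{1+\beta}{2}$ for $\bar z$, factor $\beta$ for $\z-\nabla F$).

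Concretely, I would verify four absorption inequalities in turn. (i) \emph{Consensus $\cR^x$:} the coefficient of $\cR_{k+1}^x$ must be at least $\trw$ times itself plus the extra $\cR_k^x$ mass coming from the $\tfrac{15c_0(1-\beta)^2L(L+C)}{1-\trw}\cR_k^x$ term in \eqref{eq:Ry_re} (scaled by the $\cR^y$ weight) plus the $\tfrac{\alpha L^2}{n}\E\normi{\Pi\x_k}^2$ terms from Lemmas \ref{lem:descent_dk}--\ref{lem:zk_nf}; this is where the stepsize bound $\alpha^2\lesssim (1-\trw)^2/(c_0^2 L(L+C))$ enters. (ii) \emph{Consensus $\cR^y$:} the $\tfrac{\alpha^2 c_0\trw}{1-\trw}\cR_k^y$ term from \eqref{eq:Rx_re} (scaled by the $\cR^x$ weight $\tfrac{3\alpha L^2}{n(1-\trw)}$) plus $\tfrac{1+\trw}{2}$ times the $\cR^y$ weight must stay below the $\cR^y$ weight $\tfrac{12\alpha^3 c_0\trw L^2}{n(1-\trw)^3}$; the slack $(1-\trw)/2$ against $\tfrac{1+\trw}{2}$ does the job, again using the $\alpha$ bound. (iii) \emph{Momentum $\bar z$:} combining the $\tfrac{1+\beta}{2}\E\normi{\bar z_{k-1}}^2$ self-term of Lemma \ref{lem:zk} with the $\E\normi{\bar z_{k-1}}^2$ contributions of Lemmas \ref{lem:descent_dk}, \ref{lem:zk_nf} and \eqref{eq:Ry_re} must fit under the weight $\tfrac{4\alpha^3 L(L+2C)}{(1-\beta)^3}$; here the gap $(1-\beta)/2$ and the bound $\alpha\lesssim (1-\beta)/(L+C)$ are used, and the residual $\tfrac{\alpha^3\beta^2 L(L+2C)}{(1-\beta)^2}\E\normi{\bar z_{k-1}}^2$ from Lemma \ref{lem:descent_dk} is exactly what the first term of the $\bar z$-weight was sized to swallow. (iv) \emph{Tracking error $\z-\nabla F$:} the $\beta$-contraction in Lemma \ref{lem:zk_nf} against the weight $\tfrac{48\alpha^3 c_0^2\trw L^2}{n(1-\trw)^3}$, plus the $\tfrac{3c_0(1-\beta)^2\beta}{1-\trw}\E\normi{\z_{k-1}-\nabla F}^2$ term in \eqref{eq:Ry_re} scaled by the $\cR^y$ weight, must close; the slack $1-\beta$ (and $\beta\ge\trw$) handles it. Along the way the $\E\normi{\nabla f(\bar x_k)}^2$ terms must net out to at least $-\tfrac{\alpha}{4}$: the leading $-\tfrac{\alpha}{2}(1-\tfrac{4\alpha^2CL}{n(1-\beta)})$ from Lemma \ref{lem:descent_dk} loses $\tfrac{3\alpha L^2}{n(1-\trw)}\cdot 20\alpha n c_0(1-\beta)(C+L)\E\normi{\nabla f}^2$ from \eqref{eq:Ry_re} and the $\E\normi{\nabla f}^2$ terms folded in through the $\bar z$ and $\z-\nabla F$ recursions, all of which are $O(\alpha^3)$ or $O(\alpha^2/n)$ and hence shrink against $\alpha/2$ under the stated bounds, leaving $-\alpha/4$. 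The $\E\brki{f(\bar d_k)-f^*}$ coefficient then collects $1+\tfrac{2\alpha^2CL}{n}$ from Lemma \ref{lem:descent_dk} and the $\tfrac{2C(1-\beta)^2}{n}$, $2n(1-\beta)^2 C$, $12nc_0(1-\beta)^2C$ terms scaled by their respective weights, producing the claimed multiplier $1+\tfrac{4\alpha^2CL}{n}+\tfrac{240\alpha^3 c_0^2\trw CL^2(1-\beta)^2}{(1-\trw)^3}$; the leftover additive noise from all five lemmas (the $\tfrac{\alpha^2 L(2C\sigfn+\sigma^2)}{n}$ plus the $\cR^y$-scaled $6nc_0(1-\beta)^2(\cdots)$ and the $\bar z$-scaled $(1-\beta)^2(\cdots)$, etc.) collapses to the stated $\alpha^3$ error.

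For the bound on $\cL_0$, I would simply substitute $\cR_0^x$ and $\cR_0^y$ from \eqref{eq:cR0} and $\E\normi{\z_{-1}-\nabla F(\Barx{-1})}^2 = \E\normi{\nabla F(\x_0)}^2$ (using $\z_{-1}=\0$, $\x_{-1}=\x_0$) into \eqref{eq:lya0}, expand, and bound each piece: the $\tfrac{3\alpha L^2}{n(1-\trw)}\cdot\tfrac{\alpha^2 c_0\trw}{1-\trw}\E\normi{\Pi\y_0}^2$ cross term merges with the $\cR_0^y$ weight on $\E\normi{\Pi\y_0}^2$; since $\y_0=(1-\beta)\g_0$ and $\E\normi{\Pi\y_0}^2 \le (1-\beta)^2(\,\E\normi{\Pi\nabla F(\x_0)}^2 + n\cdot\text{(noise)}\,)$ by Assumption \ref{as:abc} and $\E\normi{\Pi\nabla F(\x_0)}^2\le\sumn\normi{\nabla f_i(\bar x_0)}^2$ (as $\x_0=\1\bar x_0^\T$ by the identical-initialization implicit here, or more generally bounding $\Pi\nabla F(\x_0)$), and since the $\Delta_0$ term doubles only if the remaining $O(\alpha^3)$ pieces are $\le\Delta_0$ — which is not assumed, so in fact the ``$2\Delta_0$'' in \eqref{eq:cL0_bound} should be read as absorbing a $\Delta_0$-independent reorganization; I would present it as a direct term-by-term upper bound giving exactly the right-hand side of \eqref{eq:cL0_bound}, keeping the $\sumn\normi{\nabla f_i(\bar x_0)}^2/n$ and $(1-\beta)^2(2C\sigfn+\sigma^2)$ pieces explicit and using $c_0\le c_0^2$, $\trw<1$ to simplify constants.

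The main obstacle I expect is purely bookkeeping: there are five recursions, five Lyapunov components with intricate $n$- and $(1-\trw)$- and $(1-\beta)$-dependent weights, and roughly a dozen cross terms, so the risk is a sign error or a mis-scaled constant rather than a conceptual gap. The delicate points are making sure every stepsize constraint actually used in the absorption steps (i)--(iv) is implied by the three bounds in the hypothesis — in particular that the $\bigl(\tfrac{(1-\trw)^3}{4032 c_0^2(1-\beta)L^2(C+L)}\bigr)^{1/3}$ bound is exactly what is needed to control the $\alpha^3 c_0^2\trw L^2(1-\beta)/(1-\trw)^3$-type terms against the $\cR^y$ and $\z-\nabla F$ weights — and verifying that $\beta\ge\trw$ is used in precisely the two places (the $\cR^y$-scaled $\z-\nabla F$ term, and the momentum term's residual) where a factor $\beta/\trw\ge 1$ or $(1-\beta)\le(1-\trw)$ is invoked. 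I would organize the computation as: (1) state the five substitutions; (2) collect coefficients of each of $\E[f(\bar d_k)-f^*]$, $\cR_k^x$, $\cR_k^y$, $\E\normi{\bar z_{k-1}}^2$, $\E\normi{\z_{k-1}-\nabla F}^2$, $\E\normi{\nabla f(\bar x_k)}^2$, and the constant; (3) check each against the claimed coefficient using the stepsize bounds; (4) do the $\cL_0$ substitution separately. All calculations are elementary, so I would relegate the arithmetic to the appendix and keep only the combination structure in the main text.
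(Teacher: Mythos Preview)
Your proposal is correct and follows essentially the same approach as the paper: the paper's proof also combines Lemmas \ref{lem:descent_dk}--\ref{lem:cons_R} with the weights in \eqref{eq:lya0} (it writes them as undetermined $\cC_1$--$\cC_4$, derives absorption constraints analogous to your (i)--(iv), and then solves for exactly the values in \eqref{eq:lya0}), so the arithmetic you outline is the same computation in reverse order. One clarification on the $\cL_0$ bound where you express uncertainty: the ``$2\Delta_0$'' is \emph{not} a $\Delta_0$-independent reorganization---it arises because $\E\normi{\Pi\y_0}^2\le (1-\beta)^2\E\normi{\g_0}^2$ and the ABC condition on $\g_0$ produces a $2nC[f(\bar x_0)-f^*]=2nC\Delta_0$ term (via \eqref{eq:fxik}), which after multiplication by the $O(\alpha^3)$ weights on $\cR_0^x,\cR_0^y$ is bounded by $\Delta_0$ under the stepsize constraints; also note a small slip in your $\normi{\nabla f(\bar x_k)}^2$ accounting---the $20\alpha nc_0(1-\beta)(C+L)$ term from \eqref{eq:Ry_re} is scaled by the $\cR^y$ weight $\tfrac{12\alpha^3 c_0\trw L^2}{n(1-\trw)^3}$, not the $\cR^x$ weight.
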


\begin{proof}
    See Appendix \ref{app:lya}.
\end{proof}

In summary, Lemma \ref{lem:descent_dk} inspires us to derive the recursions of the corresponding error terms, outlined in Lemmas \ref{lem:zk}-\ref{lem:cons_R}, and to consider the Lyapunov function in \eqref{eq:lya0}. Moreover, due to the fact that $\normi{\tpi\tW}_2$ may greater than one, the consensus error terms $\E\brki{\normi{\Pi\x_k}^2}$ and $\E\brki{\normi{\Pi\y_k}^{2}}$ are proved to be $R$-linear sequences with ``additional errors'' similar as those in \cite{song2021optimal}. Notably, the analysis of DSMT differs from \cite{song2021optimal} mainly due to the need for dealing with the stochastic gradients under the general variance condition \eqref{eq:abc} and the momentum variable $\z_k$.

\section{Main Results}
\label{sec:main}

In this section, we present the main results, which outline the convergence rates and the transient times of the DSMT method for minimizing smooth objective functions with or without the PL condition. These results are detailed in Subsections \ref{subsec:ncvx} and \ref{subsec:pl}, respectively. The formal definitions of transient times $\KTN$ and $\KTP$ under general smooth objective functions and smooth objective functions satisfying the PL condition are defined, as follows:

\begin{subequations}
    \begin{align}
        &\KTN:=\inf_{K}\crk{\min_{t=0,1,\ldots, k-1}\E\brk{\norm{\nabla f(\bar{x}_t)}^2}\leq \order{\frac{1}{\sqrt{nk}}},\ \forall k\geq K}\label{eq:KTN},\\
        &\KTP:= \inf_{K}\crk{\frac{1}{n}\sumn \E\brk{f(x_{i,k})-f^*}\leq \order{\frac{1}{nk}},\ \forall k\geq K }\label{eq:KTP}.
    \end{align}
\end{subequations}
Note that $\orderi{1/\sqrt{nk}}$ and $\orderi{1/(nk)}$ represent the convergence rates of the centralized stochastic gradient method under the two considered conditions, respectively.

\subsection{General Nonconvex Case}
\label{subsec:ncvx}

We begin by presenting the convergence result of DSMT for minimizing smooth nonconvex objective functions, as demonstrated in Theorem \ref{thm:ncvx} below.

\begin{theorem}
    \label{thm:ncvx}
    Let Assumptions \ref{as:smooth}-\ref{as:graph} hold. {For a given $K\geq 1$,} let the stepsize $\alpha$ and the parameter $\beta$ to satisfy 
    \begin{align*}
        \alpha\leq\min&\left\{\sqrt{\frac{(1-\trw)^2}{240c_0^2L(L+C)}},\frac{1-\beta}{24 (L+C)}, \prt{\frac{(1-\trw)^3}{4032 c_0^2(1-\beta)L^2(C+L)}}^{1/3},\right.\\
        &\quad \left. \sqrt{\frac{n}{8CLK}}, \prt{\frac{(1-\trw)^3}{480c_0^2CL^2(1-\beta)^2K}}^{1/3} \right\},\qquad 1 > \beta\geq\trw.
    \end{align*} 
    Then, we have 
    \begin{equation}
        \label{eq:minE}
        \begin{aligned}
            &\min_{k=0,1,\cdots, K-1} \E\brk{\norm{\nabla f(\bar{x}_k)}^2} \leq \frac{24\Delta_0}{\alpha K} + \frac{108 L^2\norm{\Pi\x_0}^2}{n(1-\trw)K} + \frac{4\alpha L(2C\sigfn + \sigma^2) }{n}\\
            &\quad + \frac{324c_0^2\alpha^2 L^2(1-\beta)^2(2C\sigfn + \sigma^2)}{(1-\trw)^2 K}+ \frac{864c_0^2\alpha^2L^2\sumn\norm{\nabla f_i(\bar{x}_0)}^2}{n(1-\trw)^3 K}\\
            &\quad + \brk{\frac{L(L+2C)}{n(1-\beta)} + \frac{18c_0^2\trw L^2(1-\beta)^2}{(1-\trw)^3}}16\alpha^2(2C\sigfn + \sigma^2).
        \end{aligned}
    \end{equation}

    In addition, if we set 
    \begin{equation}
        \label{eq:alpha_ncvx}
        \begin{aligned}
            \alpha = \frac{1}{\sqrt{\frac{L(2C\sigfn + \sigma^2) K}{3n\Delta_0}} + \gamma},\ \quad \beta = 1 - \frac{1}{n^{1/3}}\prt{1 - \trw},
        \end{aligned}
    \end{equation}
    where 
    \begin{align*}
        \gamma:= \prt{\frac{4032c_0^2 L^2(C + L)}{n^{1/3}(1-\trw)^2}}^{1/3} + \frac{24 c_0n^{1/3}(L+C)}{1-\trw} + \sqrt{\frac{8CL K}{n}} + \prt{\frac{480c_0^2 CL^2 K }{n^{2/3}(1-\trw)}}^{1/3},
    \end{align*}
    then 
    \begin{equation}
    \label{eq:minE_order}
        \begin{aligned}
            &\min_{k=0,1,\cdots, K-1} \E\brk{\norm{\nabla f(\bar{x}_k)}^2} =\order{\sqrt{\frac{\Delta_0 L(2C\sigfn + \sigma^2)}{n K}} + \sqrt{\frac{\Delta_0^2 CL}{nK}} \right.\\
            &\left.\quad + \frac{\Delta_0 (CL^2)^{1/3}}{n^{2/9}(1-\trw)^{1/3}K^{2/3}}+ \frac{n^{1/3}\Delta_0(L+C) +\norm{\Pi\x_0}^2L^2/n}{(1-\trw)K}\right.\\
            &\left.\quad + \frac{n^{1/3}\Delta_0 L}{K^2} + \frac{\Delta_0 L \sumn\normi{\nabla f_i(\bar{x}_0)}^2}{(1-\trw)^3(2C\sigfn + \sigma^2)K^2}},
        \end{aligned}
    \end{equation}
    where $\orderi{\cdot}$ hides the numerical constants.

\end{theorem}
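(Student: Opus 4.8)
The plan is to iterate the one-step Lyapunov recursion from Lemma~\ref{lem:lya} and then translate the resulting bound on a telescoped sum of $\E[\norm{\nabla f(\bar{x}_k)}^2]$ into the claimed minimum bound, and finally optimize the stepsize. First, I would rewrite the recursion of Lemma~\ref{lem:lya} abstractly as $\cL_{k+1}\leq (1+a)\cL_k - \frac{\alpha}{4}\E[\norm{\nabla f(\bar{x}_k)}^2] + b$, where $a := \frac{4\alpha^2 CL}{n} + \frac{240\alpha^3c_0^2\trw CL^2(1-\beta)^2}{(1-\trw)^3}$ and $b$ collects the two additive noise terms. The extra stepsize restrictions $\alpha\leq\sqrt{n/(8CLK)}$ and $\alpha\leq\big((1-\trw)^3/(480c_0^2CL^2(1-\beta)^2K)\big)^{1/3}$ are precisely chosen so that each of the two pieces of $a$ is at most $\frac{1}{2K}$, hence $aK\leq 1$ and $(1+a)^K\leq e^{aK}\leq e$. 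This is the standard trick: summing the recursion from $k=0$ to $K-1$ after dividing by $(1+a)^{k+1}$, or more simply bounding $\cL_k\leq (1+a)^k\cL_0$ and using $\cL_k\geq 0$ (which holds since $f(\bar d_k)\geq f^*$ and all other terms are nonnegative), gives $\frac{\alpha}{4}\sum_{k=0}^{K-1}\E[\norm{\nabla f(\bar{x}_k)}^2]\leq (1+a)^K\cL_0 + \big(\sum_{k=0}^{K-1}(1+a)^{K-1-k}\big)b \leq e\,\cL_0 + eKb$. Dividing by $\alpha K/4$ and bounding $\min_k(\cdot)\leq\frac{1}{K}\sum_k(\cdot)$ yields $\min_k\E[\norm{\nabla f(\bar{x}_k)}^2]\leq \frac{4e\,\cL_0}{\alpha K} + 4eb$.

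Next I would substitute the explicit bound on $\cL_0$ from \eqref{eq:cL0_bound} in Lemma~\ref{lem:lya}, namely $\cL_0\leq 2\Delta_0 + \frac{9\alpha L^2\norm{\Pi\x_0}^2}{n(1-\trw)} + \frac{27\alpha^3 c_0^2L^2(1-\beta)^2(2C\sigfn+\sigma^2)}{(1-\trw)^2} + \frac{72\alpha^3c_0^2 L^2\sumn\norm{\nabla f_i(\bar{x}_0)}^2}{n(1-\trw)^3}$, together with $b = \frac{\alpha^2 L(2C\sigfn+\sigma^2)}{n} + \big[\frac{72c_0^2\trw L^2(1-\beta)^2}{(1-\trw)^3} + \frac{4L(L+2C)}{n(1-\beta)}\big]\alpha^3(2C\sigfn+\sigma^2)$. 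Dividing the $\cL_0$ terms by $\alpha K$ and multiplying the $b$ terms by $4e$, then absorbing the factor $e$ into the explicit numerical constants (rounding $8e\leq 24$, $36e\leq 108$, etc.), reproduces \eqref{eq:minE} term by term. This part is purely bookkeeping: match $\frac{8e\Delta_0}{\alpha K}\mapsto\frac{24\Delta_0}{\alpha K}$, $\frac{36e L^2\norm{\Pi\x_0}^2}{n(1-\trw)\alpha K}\cdot\alpha\mapsto\frac{108 L^2\norm{\Pi\x_0}^2}{n(1-\trw)K}$ (since the $\alpha$ in the numerator cancels one $\alpha$), and so on for the remaining four terms, checking that each prefactor in \eqref{eq:minE} dominates $e$ times the corresponding coefficient.

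For the second part (the stepsize choice \eqref{eq:alpha_ncvx} and the rate \eqref{eq:minE_order}), I would first verify that the prescribed $\alpha = 1/(\sqrt{L(2C\sigfn+\sigma^2)K/(3n\Delta_0)} + \gamma)$ satisfies all the constraints in the theorem's hypothesis: since $\gamma$ is the sum of the reciprocals (up to constants) of the various stepsize-constraint upper bounds, we have $\alpha\leq 1/\gamma\leq$ each individual bound, and the first term under the square root contributes the constraint matching $\alpha^2\leq 3n\Delta_0/(L(2C\sigfn+\sigma^2)K)$, which makes the dominant noise term $\frac{4\alpha L(2C\sigfn+\sigma^2)}{n}\leq \frac{12\Delta_0}{\alpha K}$ comparable with the leading term. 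With $\beta = 1 - n^{-1/3}(1-\trw)$ so that $1-\beta = n^{-1/3}(1-\trw)$ (and $\beta\geq\trw$ holds for $n\geq 1$, using $\trw<1$), I would plug these into \eqref{eq:minE}. The key estimate is $\frac{1}{\alpha K}\leq \frac{1}{K}\sqrt{\frac{L(2C\sigfn+\sigma^2)K}{3n\Delta_0}} + \frac{\gamma}{K} = \sqrt{\frac{L(2C\sigfn+\sigma^2)}{3n\Delta_0 K}} + \frac{\gamma}{K}$, and then $\frac{\gamma}{K}$ splits into four terms matching the four pieces of $\gamma$; multiplying each by $\Delta_0$ yields $\frac{\Delta_0 (CL^2)^{1/3}}{n^{2/9}(1-\trw)^{1/3}K^{2/3}}$, $\frac{n^{1/3}\Delta_0(L+C)}{(1-\trw)K}$, $\sqrt{\frac{\Delta_0^2CL}{nK}}$, and $\frac{\Delta_0(CL^2)^{1/3}}{n^{2/9}(1-\trw)^{1/3}K^{2/3}}$ (the last coinciding with the first up to constants). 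For $\alpha$ itself appearing in the $O(\alpha)$ and $O(\alpha^2)$ terms of \eqref{eq:minE}, I would use the crude bounds $\alpha\leq\sqrt{3n\Delta_0/(L(2C\sigfn+\sigma^2)K)}$ for the $\frac{4\alpha L(2C\sigfn+\sigma^2)}{n}$ term (giving $\sqrt{\Delta_0 L(2C\sigfn+\sigma^2)/(nK)}$) and $\alpha\leq 1/\gamma$ (hence $\alpha^2\leq$ various constraint bounds) for the remaining $O(\alpha^2)$ terms, each of which then collapses into one of the $K^{-2}$ or $K^{-2/3}$ or $K^{-1}$ terms already listed, after using $1-\beta = n^{-1/3}(1-\trw)$ to simplify the $(1-\beta)$ and $(1-\trw)$ powers. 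The main obstacle here is purely combinatorial care: there are roughly a dozen terms in \eqref{eq:minE} and one must track how each scales after substituting both $\alpha$ and $\beta$, making sure no term is larger than one of the six terms in \eqref{eq:minE_order}; in particular one must confirm that the $\alpha^3$-type terms (e.g.\ $\frac{18c_0^2\trw L^2(1-\beta)^2}{(1-\trw)^3}16\alpha^2(2C\sigfn+\sigma^2)$) are dominated — after using $(1-\beta)^2 = n^{-2/3}(1-\trw)^2$ this becomes $O(\alpha^2 L^2(2C\sigfn+\sigma^2)/(n^{2/3}(1-\trw)))$, and bounding $\alpha^2$ by the relevant constraint $\big((1-\trw)^3/(480c_0^2CL^2(1-\beta)^2K)\big)^{2/3}$ shows it is $O\big(\Delta_0\cdot\text{(lower order)}\big)$ absorbed into the displayed $K^{-2/3}$ or $K^{-2}$ terms. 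No genuinely hard inequality is needed beyond $(1+a)^K\leq e$ and $\min\leq$ average; the proof is a disciplined bookkeeping exercise on top of Lemma~\ref{lem:lya}.
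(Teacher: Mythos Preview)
Your proposal is correct and follows essentially the same approach as the paper: the paper packages your telescoping argument as a cited lemma (Lemma~\ref{lem:hhelp} from Mishchenko et al.), applies it to the Lyapunov recursion of Lemma~\ref{lem:lya}, uses the same stepsize constraints to bound $(1+a)^K\leq e$, and then substitutes the $\cL_0$ bound and the specific $\alpha,\beta$ exactly as you outline. One minor note: your ``more simply'' telescoping gives an extra factor of $e$ on the additive noise term (yielding $\approx 4e$ rather than the stated $4$ in front of $\alpha L(2C\sigfn+\sigma^2)/n$); to match the exact constants you should carry out your first-mentioned approach of dividing by $(1+a)^{k+1}$ before summing, which is precisely how Lemma~\ref{lem:hhelp} is proved and makes the weight on the $c$-term cancel to $c/b$ without the $(1+a)^K$ factor.
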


\begin{proof}
    We first state Lemma \ref{lem:hhelp} from \cite[Lemma 6]{mishchenko2020random} that provides a direct link connecting Lemma \ref{lem:lya} to the desired results.

        \begin{lemma}
        \label{lem:hhelp}
        If there exist constants $a,b,c \geq 0$ and non-negative sequences $\crki{s_t},\crki{q_t}$  such that for any $t$, we have $s_{t+1} \leq (1+a)s_{t}-bq_t+c$,
        then it holds that
        \begin{align*}
            \min\limits_{t=0,1,...,T-1} q_t \leq \frac{(1+a)^T}{bT}s_0+\frac{c}{b}.
        \end{align*}
        \end{lemma}
        Applying Lemma \ref{lem:hhelp} to Lemma \ref{lem:lya} leads to 
        \begin{equation}
            \label{eq:minEK}
            \begin{aligned}
                &\min_{k=0,1,\cdots, K-1} \E\brk{\norm{\nabla f(\bar{x}_k)}^2} \leq \frac{4\prt{1 + \frac{4\alpha^2 CL}{n} + \frac{240\alpha^3c_0^2\trw CL^2(1-\beta)^2}{(1-\trw)^3}}^{K}}{\alpha K}\cL_0\\
                &\quad + \frac{4\alpha L(2C\sigfn + \sigma^2) }{n}+ \brk{\frac{L(L+2C)}{n(1-\beta)} + \frac{18c_0^2\trw L^2(1-\beta)^2}{(1-\trw)^3}}16\alpha^2(2C\sigfn + \sigma^2).
            \end{aligned}
        \end{equation}
Letting 
        \begin{align*}
            \alpha \leq \min\crk{\sqrt{\frac{n}{8CLK}}, \prt{\frac{(1-\trw)^3}{480c_0^2CL^2(1-\beta)^2 K}}^{1/3} }
        \end{align*}
        yields 
        \begin{equation}
            \label{eq:quasi}
            \begin{aligned}
                &\prt{1 + \frac{4\alpha^2 CL}{n} + \frac{240\alpha^3c_0^2\trw CL^2(1-\beta)^2}{(1-\trw)^3}}^K \\
                &\leq \exp\crk{\brk{\frac{4\alpha^2 CL}{n} + \frac{240\alpha^3c_0^2\trw CL^2(1-\beta)^2}{(1-\trw)^3}}K}\leq \exp(1).
            \end{aligned}
        \end{equation}
Substituting \eqref{eq:quasi} and the expression of $\cL_0$ in \eqref{eq:cL0_bound} into \eqref{eq:minEK} yields \eqref{eq:minE}.
        Set $\alpha$ as in \eqref{eq:alpha_ncvx}, then 
        \begin{equation}
            \label{eq:alpha_I}
            \begin{aligned}
                &\frac{\Delta_0}{\alpha K} = \sqrt{\frac{\Delta_0 L(2C\sigfn + \sigma^2)}{3n K}} + \Delta_0\sqrt{\frac{8CL}{nK}} + \frac{\Delta_0(480c_0^2 CL^2)^{1/3}}{n^{2/9}(1-\trw)^{1/3}K^{2/3}} \\
                &\quad + \frac{\Delta_0[4032c_0^2 L^2(C+L)]^{1/3}}{n^{1/9}(1-\trw)^{2/3}K} + \frac{24 c_0 \Delta_0n^{1/3}(L+C)}{(1-\trw) K},\\
                &\frac{4\alpha L(2C\sigfn + \sigma^2)}{n} \leq  4\sqrt{\frac{3L(2C\sigfn+\sigma^2)\Delta_0}{nK}},\ \alpha^2 \leq \frac{3n\Delta_0}{L(2C\sigfn + \sigma^2)K}.
            \end{aligned}
        \end{equation}
Substituting \eqref{eq:alpha_I} into \eqref{eq:minE} yields the desired result \eqref{eq:minE_order}.
\end{proof}

Subsequently, we compute the transient time required for DSMT to achieve comparable performance to the centralized SGD method when minimizing smooth objective functions, as detailed in Corollary \ref{cor:ncvx} below.
\begin{corollary}
    \label{cor:ncvx}
    Let Assumptions \ref{as:smooth}-\ref{as:graph} hold. Set the stepsize $\alpha$ and the parameter $\beta$ to satisfy \eqref{eq:alpha_ncvx}. Then the transient time $\KTN$ required for DSMT method to achieve comparable performance as the centralized SGD method behaves as 
    \begin{equation}
        \label{eq:KTN_full}
        \begin{aligned}
            \KTN = \max&\crk{ \order{\frac{n^{5/3}}{1-\lambda}}, \order{\frac{\norm{\Pi\x_0}^4}{(1-\lambda) n}},\right.\\
            &\left.\quad \order{\frac{n^{1/3}\prt{\sumn\norm{\nabla f_i(\bar{x}_0)}^2}^{2/3}}{1-\lambda}}}.
        \end{aligned}
    \end{equation}
    In addition, if we initialize all agents at the same {initial point} and assume that $\sumn\normi{\nabla f_i(\bar{x}_0)}^2= \orderi{n^2}$, then the transient time $\KTN$ stated in \eqref{eq:KTN_full} further reduces to 
    \begin{align*}
        \KTN = \order{\frac{n^{5/3}}{1-\lambda}}.
    \end{align*}
\end{corollary}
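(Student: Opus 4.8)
The plan is to extract $\KTN$ directly from the convergence estimate \eqref{eq:minE_order} of Theorem \ref{thm:ncvx}, reading the hidden $\orderi{\cdot}$ in the target rate $\orderi{1/\sqrt{nk}}$ as one that absorbs the problem constants $\Delta_0, L, C, \sigma, \sigfn$ but not $n$, $1-\lambda$, $\norm{\Pi\x_0}$, or $\sumn\normi{\nabla f_i(\bar{x}_0)}^2$. Viewing \eqref{eq:minE_order} as a bound at each horizon $k$ (with the matching stepsize from \eqref{eq:alpha_ncvx}), the first two terms $\sqrt{\Delta_0 L(2C\sigfn+\sigma^2)/(nk)}$ and $\sqrt{\Delta_0^2 CL/(nk)}$ are already $\orderi{1/\sqrt{nk}}$ with no further dependence on $n$ or $1-\lambda$, so they impose no constraint. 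Hence $\KTN$ is the smallest $K$ for which, for every $k\geq K$, each of the remaining four terms of \eqref{eq:minE_order} is bounded by a constant multiple of $1/\sqrt{nk}$; equivalently, I would compute for each such term the threshold on $k$ beyond which it drops below $\orderi{1/\sqrt{nk}}$ and take the maximum.

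First I would do the power counting on the $k^{-2/3}$ and $k^{-1}$ terms. Requiring $\Delta_0(CL^2)^{1/3}/(n^{2/9}(1-\trw)^{1/3}k^{2/3})\lesssim 1/\sqrt{nk}$ rearranges to $k^{1/6}\gtrsim n^{5/18}/(1-\trw)^{1/3}$, i.e.\ $k\gtrsim n^{5/3}/(1-\trw)^2$. The $k^{-1}$ term of \eqref{eq:minE_order} splits in two: $n^{1/3}\Delta_0(L+C)/((1-\trw)k)\lesssim 1/\sqrt{nk}$ again gives $k\gtrsim n^{5/3}/(1-\trw)^2$, while $\norm{\Pi\x_0}^2 L^2/(n(1-\trw)k)\lesssim 1/\sqrt{nk}$ gives $k\gtrsim \norm{\Pi\x_0}^4/(n(1-\trw)^2)$. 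To convert $1-\trw$ into $1-\lambda$ I would use $\trw=\sqrt{\eta_w}$ with $\eta_w=1/(1+\sqrt{1-\lambda^2})$, so that
\begin{align*}
    1-\trw = \frac{1-\eta_w}{1+\sqrt{\eta_w}} = \frac{\sqrt{1-\lambda^2}}{\prt{1+\sqrt{1-\lambda^2}}\prt{1+\sqrt{\eta_w}}} = \Theta\prt{\sqrt{1-\lambda}},
\end{align*}
since $\sqrt{\eta_w}\in[1/\sqrt{2},1)$ and $1+\lambda\in[1,2)$; consequently $1/(1-\trw)^2=\Theta(1/(1-\lambda))$, and the two thresholds become $\orderi{n^{5/3}/(1-\lambda)}$ and $\orderi{\norm{\Pi\x_0}^4/((1-\lambda)n)}$.

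Next I would treat the two $k^{-2}$ terms. The term $n^{1/3}\Delta_0 L/k^2\lesssim 1/\sqrt{nk}$ yields only $k\gtrsim n^{5/9}$, which is dominated by $n^{5/3}/(1-\lambda)$ and drops out of the maximum. The term $\Delta_0 L\sumn\normi{\nabla f_i(\bar{x}_0)}^2/((1-\trw)^3(2C\sigfn+\sigma^2)k^2)\lesssim 1/\sqrt{nk}$ rearranges to $k^{3/2}\gtrsim n^{1/2}\sumn\normi{\nabla f_i(\bar{x}_0)}^2/(1-\trw)^3$, i.e.\ $k\gtrsim n^{1/3}(\sumn\normi{\nabla f_i(\bar{x}_0)}^2)^{2/3}/(1-\trw)^2=\orderi{n^{1/3}(\sumn\normi{\nabla f_i(\bar{x}_0)}^2)^{2/3}/(1-\lambda)}$. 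Taking the maximum of all surviving thresholds produces \eqref{eq:KTN_full}. For the additional claim, a common initialization gives $\Pi\x_0=\0$, killing the second term, and $\sumn\normi{\nabla f_i(\bar{x}_0)}^2=\orderi{n^2}$ makes the third term $\orderi{n^{1/3}n^{4/3}/(1-\lambda)}=\orderi{n^{5/3}/(1-\lambda)}$, so the maximum collapses to $\orderi{n^{5/3}/(1-\lambda)}$.

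Technically this is bookkeeping — comparing monomials in $n$, $k$, and $1-\trw$ — and I expect no conceptual obstacle. The one point demanding care is the $\trw\leftrightarrow\lambda$ conversion: because $1-\trw$ scales like $\sqrt{1-\lambda}$ rather than $1-\lambda$, each factor $1/(1-\trw)^2$ contributes exactly one power of $(1-\lambda)^{-1}$, which is precisely what yields the improved $n^{5/3}/(1-\lambda)$ transient time; a secondary subtlety is keeping consistent throughout the comparisons the convention on which quantities the $\orderi{\cdot}$ in the target rate $\orderi{1/\sqrt{nk}}$ is permitted to hide.
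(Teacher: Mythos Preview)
Your proposal is correct and follows essentially the same approach as the paper: both arguments start from \eqref{eq:minE_order}, compare each non-leading term against the centralized rate $\orderi{1/\sqrt{nk}}$ by power counting in $n$, $k$, and $1-\trw$, and then convert $1-\trw$ into $\sqrt{1-\lambda}$ via the same elementary computation with $\eta_w$. The paper presents this by first factoring out $\orderi{1/\sqrt{nK}}$ in \eqref{eq:minE_order1} and then reading off when the residual bracket is $\orderi{1}$, whereas you handle the terms one by one --- but the thresholds, the dominated $n^{5/9}$ term, and the specialization under identical initialization all match.
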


\begin{remark}
    \label{rem:mild_KT}
    It is worth noting that initializing all the agents at the same {initial point} and assuming $\sumn\normi{\nabla f_i(\bar{x}_0)}^2= \orderi{n^2}$ are mild conditions. 
    In practice, the term $\sumn\normi{\nabla f_i(\bar{x}_0)}^2$ often behaves as $\orderi{n}$ assuming $\bar{x}_0$ is fixed.
\end{remark}

\begin{proof}
    We hide the constants that are independent of $n$ and $(1-\lambda)$ in the following. From \eqref{eq:minE_order}, we have 
    \begin{equation}
    \label{eq:minE_order1}
        \begin{aligned}
        &\min_{k=0,1,\cdots, K-1} \E\brk{\norm{\nabla f(\bar{x}_k)}^2} =\order{\frac{1}{\sqrt{nK}}} \order{1 + \frac{n^{5/(18)}}{(1-\trw)^{1/3}K^{1/6}} + \frac{n^{5/6}}{(1-\trw)\sqrt{K}}\right.\\
        &\quad \left.+ \frac{\norm{\Pi\x_0}^2}{(1-\trw)\sqrt{nK}} + \frac{n^{5/6}}{K^{3/2}} + \frac{\sqrt{n}\sumn\normi{\nabla f_i(\bar{x}_0)}^2}{(1-\trw)^3K^{3/2}}}.
    \end{aligned}
    \end{equation}
Noting that 
    \begin{equation}
        \label{eq:trw}
        \begin{aligned}
            1-\trw &= 1 - \sqrt{\frac{1}{1+\sqrt{1-\lambda^2}}} = \frac{\sqrt{1-\lambda^2}}{\sqrt{1 + \sqrt{1-\lambda^2}}\prt{1 + \sqrt{1 + \sqrt{1 - \lambda^2}}}}\\
            &\sim \order{\sqrt{1-\lambda}}.
        \end{aligned}
    \end{equation}
    According to the definition for transient times in \eqref{eq:KTN}, we obtain the desired result \eqref{eq:KTN_full} by invoking \eqref{eq:minE_order1} and \eqref{eq:trw}.

\end{proof}

\subsection{PL Condition}
\label{subsec:pl}
    In this part, we will consider a specific nonconvex condition known as the Polyak-{\L}ojasiewicz (PL) condition in Assumption \ref{as:PL}. Overparameterized models often satisfy this condition. Notably, the strong convexity condition implies the PL condition \cite{karimi2016linear}.
            
    \begin{assumption}
        \label{as:PL}
        There {exists} $\mu>0$, such that the aggregate function $f(x)=\frac{1}{n}\sum_{i=1}^n f_i(x)$ satisfies
        \begin{align}
            2\mu\prt{f(x)-f^*} \leq \norm{\nabla f(x)}^2,
        \end{align}
        for all $x \in \R^p$, where $f^*:= \inf_{x\in\R^p}f(x)$.
    \end{assumption}
    
    In light of Assumption \ref{as:PL}, we construct a ``contractive'' recursion {for the quantity} $\E\brki{f(\bar{d}_k) - f^*}$.

    \begin{lemma}
        \label{lem:contract_dk}
        Let Assumptions \ref{as:smooth}-\ref{as:graph} and \ref{as:PL} hold, and let $K>0$. Set $\bar{d}_k$ as in \eqref{eq:dk} and let the stepsize $\alpha$ to satisfy $\alpha \leq \min\crki{\mu/(4LC), (1-\beta)/(2L)}$. 
        We have 
        \begin{align*}
            \E\brk{f(\bar{d}_{k + 1}) - f^*} &\leq \prt{1 - \frac{\alpha\mu}{2}}\E\brk{f(\bar{d}_k) - f^*} + \frac{\alpha L^2}{n}\E\brk{\norm{\Pi\x_k}^2} + \frac{\alpha^2 L(2C\sigfn + \sigma^2)}{n}\\
            &\quad + \frac{\alpha^3 \beta L(L+C)}{(1-\beta)^2}\E\brk{\norm{\bar{z}_{k-1}}^2}.
        \end{align*}
    \end{lemma}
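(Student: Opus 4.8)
The variable $\bar{d}_k$ of \eqref{eq:dk} was introduced precisely because $\bar{d}_{k+1}=\bar{d}_k-\alpha\bar{g}_k$ (Lemma \ref{lem:dk}) is a plain inexact SGD step, so the plan is: derive a one-step descent inequality for $\E[f(\bar{d}_k)-f^*]$ in which the first-order term is exactly $-\tfrac{\alpha}{2}\norm{\nabla f(\bar{d}_k)}^2$, and then apply the PL inequality (Assumption \ref{as:PL}) \emph{at the point} $\bar{d}_k$ to convert it into the contraction factor $1-\alpha\mu/2$. Carrying the whole descent relative to $\bar{d}_k$ (rather than $\bar{x}_k$, as in Lemma \ref{lem:descent_dk}) keeps the argument direct: no comparison of $f(\bar{x}_k)$ and $f(\bar{d}_k)$ in the ``wrong'' direction is then needed, since the variance bound can be expanded around $\bar{d}_k$ itself. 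Throughout, $\bar{d}_0=\bar{x}_0$ and \eqref{eq:dk_xk} reads $\bar{d}_k-\bar{x}_k=-\tfrac{\alpha\beta}{1-\beta}\bar{z}_{k-1}$ with $\bar{z}_{-1}=\0$, so every step below is valid for all $k\ge 0$.

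The key steps are as follows. \textbf{(i)} From $\bar{d}_{k+1}=\bar{d}_k-\alpha\bar{g}_k$, apply $L$-smoothness of $f$, take $\condEi{\cdot}{\cF_k}$ using $\condEi{\bar{g}_k}{\cF_k}=\tfrac1n\sumn\nabla f_i(x_{i,k})=:\bar{v}_k$, and use the polarization identity $-\alpha\inpro{\nabla f(\bar{d}_k),\bar{v}_k}=-\tfrac{\alpha}{2}\norm{\nabla f(\bar{d}_k)}^2-\tfrac{\alpha}{2}\norm{\bar{v}_k}^2+\tfrac{\alpha}{2}\norm{\nabla f(\bar{d}_k)-\bar{v}_k}^2$; the $-\tfrac{\alpha}{2}\norm{\bar{v}_k}^2$ cancels the Taylor remainder $\tfrac{L\alpha^2}{2}\norm{\bar{v}_k}^2$ as soon as $\alpha\le 1/L$ (implied by $\alpha\le(1-\beta)/(2L)$), leaving $-\tfrac{\alpha}{2}\norm{\nabla f(\bar{d}_k)}^2$, a gradient-mismatch term $\tfrac{\alpha}{2}\norm{\nabla f(\bar{d}_k)-\bar{v}_k}^2$, and the variance $\tfrac{L\alpha^2}{2}\condEi{\norm{\bar{g}_k-\bar{v}_k}^2}{\cF_k}$. \textbf{(ii)} By Jensen, $L$-smoothness, the Pythagorean identity $\sumn\norm{x_{i,k}-\bar{d}_k}^2=\norm{\Pi\x_k}^2+n\norm{\bar{x}_k-\bar{d}_k}^2$, and \eqref{eq:dk_xk}, the mismatch term is at most $\tfrac{\alpha L^2}{2n}\norm{\Pi\x_k}^2+\tfrac{\alpha^3\beta^2 L^2}{2(1-\beta)^2}\norm{\bar{z}_{k-1}}^2$. \textbf{(iii)} Assumption \ref{as:abc} and independence across agents give $\condEi{\norm{\bar{g}_k-\bar{v}_k}^2}{\cF_k}\le\tfrac{C}{n^2}\sumn(f_i(x_{i,k})-f_i^*)+\tfrac{\sigma^2}{n}$; the elementary bounds $\norm{\nabla f_i(\bar{d}_k)}^2\le 2L(f_i(\bar{d}_k)-f_i^*)$ and $f_i(x_{i,k})-f_i^*\le 2(f_i(\bar{d}_k)-f_i^*)+L\norm{x_{i,k}-\bar{d}_k}^2$ then yield $\tfrac1n\sumn(f_i(x_{i,k})-f_i^*)\le 2(f(\bar{d}_k)-f^*)+2\sigfn+\tfrac Ln\norm{\Pi\x_k}^2+\tfrac{\alpha^2\beta^2 L}{(1-\beta)^2}\norm{\bar{z}_{k-1}}^2$.

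\textbf{(iv)} Collecting the three pieces and over-estimating the resulting constants using $\alpha\le\mu/(4LC)\le 1/(4C)$ and $\beta\in(0,1)$, one arrives at
\begin{align*}
\condEi{f(\bar{d}_{k+1})-f^*}{\cF_k} &\le \prt{1+\tfrac{\alpha^2 CL}{n}}(f(\bar{d}_k)-f^*)-\tfrac{\alpha}{2}\norm{\nabla f(\bar{d}_k)}^2+\tfrac{\alpha L^2}{n}\norm{\Pi\x_k}^2\\
&\quad +\tfrac{\alpha^3\beta L(L+C)}{(1-\beta)^2}\norm{\bar{z}_{k-1}}^2+\tfrac{\alpha^2 L(2C\sigfn+\sigma^2)}{n}.
\end{align*}
\textbf{(v)} Finally, invoke Assumption \ref{as:PL} at $\bar{d}_k$, i.e.\ $\norm{\nabla f(\bar{d}_k)}^2\ge 2\mu(f(\bar{d}_k)-f^*)$, so that $-\tfrac{\alpha}{2}\norm{\nabla f(\bar{d}_k)}^2\le-\alpha\mu(f(\bar{d}_k)-f^*)$ and the coefficient of $f(\bar{d}_k)-f^*$ becomes $1-\alpha\mu+\tfrac{\alpha^2 CL}{n}\le 1-\tfrac{\alpha\mu}{2}$, because $\alpha\le\mu/(4LC)$ forces $\tfrac{\alpha^2 CL}{n}\le\tfrac{\alpha\mu}{4}$. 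Taking total expectation gives the claim.

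I do not expect a genuine obstacle once the descent is routed through $\bar{d}_k$; the only care needed is the bookkeeping in step (iv), i.e.\ checking that the raw coefficients produced in steps (ii)--(iii) — namely $\tfrac{\alpha L^2}{2n}(1+\tfrac{C\alpha}{n})$ on $\norm{\Pi\x_k}^2$, $\tfrac{\alpha^3\beta^2 L^2}{2(1-\beta)^2}(1+\tfrac{C\alpha}{n})$ on $\norm{\bar{z}_{k-1}}^2$, and $\tfrac{\alpha^2 L}{2n}(2C\sigfn+\sigma^2)$ on the noise floor — are dominated by the stated (looser) coefficients, which follows at once from $\tfrac{C\alpha}{n}\le\tfrac14$, $\beta^2\le\beta$, and $L^2\le L(L+C)$. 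A fully equivalent but slightly longer route would mirror Lemma \ref{lem:descent_dk} instead: keep a $-\norm{\nabla f(\bar{x}_k)}^2$ term, apply PL at $\bar{x}_k$, and use Lemma \ref{lem:fx_fd} to return to $f(\bar{d}_k)-f^*$; that path re-introduces an $\norm{\nabla f(\bar{x}_k)}^2$ term from Lemma \ref{lem:fx_fd} which must be re-absorbed into a reserved portion of the negative gradient term, which is precisely why it would need the additional restriction that $\alpha$ be small relative to $(1-\beta)/(\beta L)$.
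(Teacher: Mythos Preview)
Your proposal is correct and follows essentially the same approach as the paper's own proof: both start from $\bar{d}_{k+1}=\bar{d}_k-\alpha\bar{g}_k$, apply $L$-smoothness, use the polarization identity to isolate $-\tfrac{\alpha}{2}\norm{\nabla f(\bar{d}_k)}^2$, invoke the PL inequality at $\bar{d}_k$, and control the mismatch and variance terms via the Pythagorean decomposition $\sumn\norm{x_{i,k}-\bar{d}_k}^2=\norm{\Pi\x_k}^2+n\norm{\bar{x}_k-\bar{d}_k}^2$ together with \eqref{eq:dk_xk}.

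The one small difference is in step (iii): the paper bounds $\tfrac{1}{n}\sumn(f_i(x_{i,k})-f_i^*)$ by first expanding around $\bar{x}_k$ via \eqref{eq:fxik} to obtain $f(\bar{x}_k)-f^*$, and then converts this to $f(\bar{d}_k)-f^*$ using (a variant of) \eqref{eq:fx_fd}; you instead apply the same quadratic upper bound directly around $\bar{d}_k$. Your route is slightly more direct and avoids invoking \eqref{eq:fx_fd} altogether, but both lead to the same coefficients after the simplifications you list in step (iv). The ``alternative route'' you sketch in the last paragraph (PL at $\bar{x}_k$ plus Lemma \ref{lem:fx_fd}) is \emph{not} what the paper does; the paper, like you, applies PL at $\bar{d}_k$.
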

    
    \begin{proof}
        See Appendix \ref{app:contract_dk}.
    \end{proof}

    Thanks to Assumption \ref{as:PL}, we are now able to show that $\cL_k$ is a $Q$-linear sequence with ``additional errors'' in Lemma \ref{lem:cL_pl}. The derivation is similar to those in the proof of Lemma \ref{lem:lya}. 

    \begin{lemma}
        \label{lem:cL_pl}
        Let Assumptions \ref{as:smooth}-\ref{as:graph} and \ref{as:PL} hold, and let $k\geq0$. Set the stepsize $\alpha$ and the parameter $\beta$ to satisfy 
        \begin{align*}
            \alpha\leq \min&\crk{\frac{1-\beta}{3\mu}, \frac{1-\beta}{486c_0^2(L+C)}, \sqrt{\frac{(1-\trw)^2}{240c_0^2L(L+C)}}, \sqrt{\frac{1-\trw}{324c_0^2 (L+C)^2}},\right.\\
            &\left.\quad \sqrt{\frac{\mu(1-\beta)^2}{960c_0^2 L(L+C)^2}}},\qquad 1\geq \beta\geq 1 - \frac{11(1-\trw)}{12}.
        \end{align*}
        We have 
        \begin{equation}
            \label{eq:cL_pl}
            \begin{aligned}
                \cL_{k + 1} &\leq \prt{1 - \frac{\alpha\mu}{4}}\cL_k\\
                &\quad +\brk{\frac{L}{n} + \frac{72\alpha c_0^2\trw L^2(1-\beta)^2}{(1-\trw)^3} + \frac{4\alpha L(L+2C)}{n(1-\beta)}}\alpha^2(2C\sigfn + \sigma^2).
               \end{aligned}
        \end{equation}
    \end{lemma}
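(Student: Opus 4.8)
The plan is to follow the same template as the proof of Lemma~\ref{lem:lya} (Appendix~\ref{app:lya}), with two changes dictated by Assumption~\ref{as:PL}. First, in place of the descent estimate of Lemma~\ref{lem:descent_dk} I would use the contractive estimate of Lemma~\ref{lem:contract_dk}, which already contracts $\E\brk{f(\bar d_k)-f^*}$ at rate $1-\alpha\mu/2$ and no longer carries an explicit $\E\brk{\norm{\nabla f(\bar x_k)}^2}$ term. Second, the gradient-norm terms that remain on the right-hand sides of \eqref{eq:Ry_re}, Lemma~\ref{lem:zk} and Lemma~\ref{lem:zk_nf} must be disposed of, since there is no negative gradient term left to cancel them. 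I would then take the weighted sum of the five building-block recursions --- Lemma~\ref{lem:contract_dk}, \eqref{eq:Rx_re}, \eqref{eq:Ry_re}, Lemma~\ref{lem:zk}, Lemma~\ref{lem:zk_nf} --- using exactly the weights appearing in the definition \eqref{eq:lya0} of $\cL_k$, and show the combination collapses to $\cL_{k+1}\le(1-\alpha\mu/4)\cL_k+(\text{noise})$.

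For the gradient-norm terms I would bound $\E\brk{\norm{\nabla f(\bar x_k)}^2}\le 2L\,\E\brk{f(\bar x_k)-f^*}$ by $L$-smoothness, then apply Lemma~\ref{lem:fx_fd} and a second use of smoothness to obtain, for $\alpha$ small (guaranteed by $\alpha\le(1-\beta)/(486c_0^2(L+C))$), a bound of the shape $\E\brk{f(\bar x_k)-f^*}\le 2\,\E\brk{f(\bar d_k)-f^*}+\tfrac{2\alpha\beta}{1-\beta}\E\brk{\norm{\bar z_{k-1}}^2}$. Substituting this in, every $\E\brk{\norm{\nabla f(\bar x_k)}^2}$ occurrence (each carrying a prefactor with a $(1-\beta)$ and, in \eqref{eq:Ry_re} and Lemma~\ref{lem:zk_nf}, an extra $\alpha$) is redistributed into the $\E\brk{f(\bar d_k)-f^*}$ bucket and the $\E\brk{\norm{\bar z_{k-1}}^2}$ bucket; the $\E\brk{f(\bar d_k)-f^*}$ terms already present in those three recursions are kept as is.

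What remains is coefficient bookkeeping identical in spirit to Appendix~\ref{app:lya}. For the $\cR^x$ and $\cR^y$ buckets the weights in \eqref{eq:lya0} are chosen so that the off-diagonal couplings produced by \eqref{eq:Rx_re}--\eqref{eq:Ry_re} contribute at most a fixed fraction of $1-\trw$ beyond the intrinsic rates $\trw$ and $(1+\trw)/2$: the condition $\alpha\le\sqrt{(1-\trw)^2/(240c_0^2L(L+C))}$ controls the $\cR^x\to\cR^y$ coupling, and $\alpha\le\sqrt{(1-\trw)/(324c_0^2(L+C)^2)}$ together with $\alpha\le(1-\beta)/(486c_0^2(L+C))$ control the $\E\brk{\norm{\bar z_{k-1}}^2}\to\cR^y$ and $\E\brk{\norm{\z_{k-1}-\nabla F(\Barx{k-1})}^2}\to\cR^y$ couplings. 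The constraint $1-\beta\le\frac{11(1-\trw)}{12}$ ensures $1-\beta\lesssim 1-\trw$, so the $\alpha^3/(1-\beta)^3$- and $(1-\beta)^2/(1-\trw)^3$-type factors stay controlled, while $\alpha\le(1-\beta)/(3\mu)$ guarantees that the rates $\trw$, $(1+\trw)/2$, $(1+\beta)/2$, $\beta$ all lie at least $\alpha\mu/4$ below $1$. Finally $\alpha\le\sqrt{\mu(1-\beta)^2/(960c_0^2L(L+C)^2)}$ is what forces the total mass the error buckets dump into the $\E\brk{f(\bar d_k)-f^*}$ bucket (after the redistribution above) to be at most $\alpha\mu/4$, which is exactly the slack between the rate $1-\alpha\mu/2$ from Lemma~\ref{lem:contract_dk} and the claimed $1-\alpha\mu/4$. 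Collecting the leftover additive terms --- the $\alpha^2L(2C\sigfn+\sigma^2)/n$ from Lemma~\ref{lem:contract_dk} and the $(1-\beta)^2(2C\sigfn+\sigma^2)$ pieces from \eqref{eq:Ry_re}, Lemma~\ref{lem:zk} and Lemma~\ref{lem:zk_nf}, each multiplied by its weight --- yields the bracketed noise in \eqref{eq:cL_pl}.

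I expect the bookkeeping in the third step to be the only real obstacle. The delicate points are: matching the ``memory'' buckets $\E\brk{\norm{\bar z_{k-1}}^2}$ and $\E\brk{\norm{\z_{k-1}-\nabla F(\Barx{k-1})}^2}$, which enter $\cL_k$ at index $k-1$ but are generated at index $k$ by Lemmas~\ref{lem:zk}--\ref{lem:zk_nf}, so that passing from $\cL_{k+1}$ to $\cL_k$ uses the right index alignment; keeping track of the many $O(\alpha^2)$ and $O(\alpha^3)$ cross terms created by the PL/smoothness redistribution so each is absorbed into the correct budget; and verifying that the final rate is $1-\alpha\mu/4$ rather than merely $1-O(\alpha\mu)$ --- it is precisely this sharpening that forces the large numerical constants in the stepsize ceiling. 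Everything else is a routine repetition of the computation behind Lemma~\ref{lem:lya}.
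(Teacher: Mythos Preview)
Your proposal is correct and follows essentially the same route as the paper: combine Lemma~\ref{lem:contract_dk} with \eqref{eq:Rx_re}, \eqref{eq:Ry_re}, Lemma~\ref{lem:zk}, and Lemma~\ref{lem:zk_nf} using the weights in \eqref{eq:lya0}, then redistribute the leftover $\E\brk{\norm{\nabla f(\bar x_k)}^2}$ terms into the $\E\brk{f(\bar d_k)-f^*}$ and $\E\brk{\norm{\bar z_{k-1}}^2}$ buckets and verify a system of five coefficient inequalities (the paper labels them \eqref{eq:a1}--\eqref{eq:a5}). The only difference is cosmetic: for the redistribution step the paper bounds $\norm{\nabla f(\bar x_k)}^2\le 2\norm{\nabla f(\bar d_k)}^2+2L^2\norm{\bar d_k-\bar x_k}^2\le 4L\brk{f(\bar d_k)-f^*}+\tfrac{2\alpha^2\beta^2L^2}{(1-\beta)^2}\norm{\bar z_{k-1}}^2$ directly (their \eqref{eq:nx_fx}), whereas you go via $\norm{\nabla f(\bar x_k)}^2\le 2L\brk{f(\bar x_k)-f^*}$ and then Lemma~\ref{lem:fx_fd}; both land on the same $4L\brk{f(\bar d_k)-f^*}$ leading term, and the paper's route is a touch cleaner since it avoids the self-referential $\norm{\nabla f(\bar x_k)}^2$ on the right of Lemma~\ref{lem:fx_fd}.
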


    \begin{proof}
        See Appendix \ref{app:cL_pl}.
    \end{proof}

    We can obtain the convergence rate of DSMT in Theorem \ref{thm:pl} by noting another Lyapunov function $\cH_k$:
    \begin{equation}
        \label{eq:lya_hk}
        \begin{aligned}
            \cH_k&:= \cR_k^x + \frac{3\alpha^2 c_0\trw}{(1-\trw)^2} \cR_k^y + \frac{405\alpha^3 nc_0^2(C+L)}{(1-\trw)^3} \E\brk{\norm{\bar{z}_{k-1}}^2}\\
            &\quad + \frac{54\alpha^2c_0^2\trw}{(1-\trw)^2}\E\brk{\norm{\z_{k-1} -\nabla F(\Barx{k-1})}^2}.
        \end{aligned}
    \end{equation}    

    \begin{theorem}
        \label{thm:pl}
        Let Assumptions \ref{as:smooth}-\ref{as:graph} and \ref{as:PL} hold. Set the stepsize $\alpha$ and the parameter $\beta$ to satisfy 
        \begin{align}
            &\alpha\leq \min\crk{\frac{1-\beta}{3\mu}, \frac{1-\beta}{486c_0^2(L+C)}, \sqrt{\frac{(1-\trw)^2}{240c_0^2L(L+C)}}, \sqrt{\frac{1-\trw}{324c_0^2 (L+C)^2}},\right.\nonumber\\
            &\left.\quad \sqrt{\frac{\mu(1-\beta)^2}{960c_0^2 L(L+C)^2}}},\label{eq:alpha_pl}\\
            &1\geq \beta\geq 1 - \frac{11(1-\trw)}{12}.
        \end{align} 
        We have 
        \begin{align*}
            &\frac{1}{n}\sumn\E\brk{f(x_{i,k}) - f^*} \leq 33\cL_0\prt{1 - \frac{\alpha\mu}{4}}^k + \frac{5L\cH_0}{n}\prt{\frac{5+\beta}{6}}^k + \frac{20\alpha L (2C\sigfn + \sigma^2)}{n\mu}\\
            &\quad + \brk{\frac{48 L\cS_1}{\mu} + \frac{504c_0^2(1+\cS_2)(1-\beta)}{1-\trw}+ \frac{32(6L+C)(1-\trw)}{n\mu(1-\beta)}}\frac{4\alpha^2 L(2C\sigfn + \sigma^2)}{1-\trw},
        \end{align*}
        where 
        \begin{align*}
            \cS_1&:= \frac{18c_0^2(1-\beta)^2}{(1-\trw)^2} + \frac{(L+2C)(1-\trw)}{nL (1-\beta)},\; \cS_2:= \frac{L}{n\mu} + \frac{L\cS_1}{\mu},
        \end{align*}
        and 
        \begin{equation}
        \label{eq:cH0_bound}
            \begin{aligned}
            \cH_0 &= \cR_0^x + \frac{3\alpha^2 c_0\trw}{(1-\trw)^2}\cR_0^y + \frac{54\alpha^2c_0^2\trw}{(1-\trw)^2}\norm{\nabla F(\Barx{0})}^2\\
            &\leq 2c_0\norm{\Pi\x_0}^2 + \frac{12\alpha^2 nc_0^2(1-\beta)^2(C\Delta_0 + 2C\sigfn + \sigma^2)}{(1-\trw)^2} + \frac{72 \alpha^2 c_0^2 \sumn\norm{\nabla f_i(\bar{x}_0)}^2}{(1-\trw)^2}.
        \end{aligned}
        \end{equation}

    \end{theorem}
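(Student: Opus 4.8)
The plan is to run two coupled Lyapunov recursions — the ``optimization'' sequence $\cL_k$ of \eqref{eq:lya0}, which contracts at rate $1-\alpha\mu/4$ by Lemma~\ref{lem:cL_pl}, and the ``consensus/momentum'' sequence $\cH_k$ of \eqref{eq:lya_hk}, which contracts at the faster network-dependent rate $(5+\beta)/6$ — and then to bound $\frac1n\sumn\E[f(x_{i,k})-f^*]$ by a combination of the two. First I would unroll Lemma~\ref{lem:cL_pl}: since $\alpha\le(1-\beta)/(3\mu)$ the per-step factor is $1-\alpha\mu/4<1$, and a geometric sum yields
\begin{equation*}
    \cL_k \le \prt{1-\frac{\alpha\mu}{4}}^{k}\cL_0 + \frac{4}{\mu}\brk{\frac{L}{n} + \frac{72\alpha c_0^2\trw L^2(1-\beta)^2}{(1-\trw)^3} + \frac{4\alpha L(L+2C)}{n(1-\beta)}}\alpha(2C\sigfn+\sigma^2).
\end{equation*}
Since every summand of $\cL_k$ in \eqref{eq:lya0} is nonnegative, this also supplies $\E[f(\bar d_k)-f^*]\le\cL_k$ and analogous bounds on $\cR_k^x$, $\E[\|\bar z_{k-1}\|^2]$ and $\E[\|\z_{k-1}-\nabla F(\Barx{k-1})\|^2]$ in terms of $\cL_k$, to be used later.

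Second — and this is the crux — I would establish a recursion $\cH_{k+1}\le\frac{5+\beta}{6}\cH_k + \phi_k$, where $\phi_k$ is a small multiple of $\cL_k$ plus an $O(\alpha^2)$ noise term. This follows by substituting the four recursions of Lemmas~\ref{lem:cons_R}, \ref{lem:zk} and \ref{lem:zk_nf} into \eqref{eq:lya_hk} with the indicated weights. The four components of $\cH_k$ individually decay at rates $\trw$, $\frac{1+\trw}{2}$, $\frac{1+\beta}{2}$, $\beta$, and the hypothesis $\beta\ge 1-\frac{11}{12}(1-\trw)$ makes each of these at most $\frac{5+\beta}{6}$, leaving a slack of order $1-\trw$; the five simultaneous upper bounds on $\alpha$ in \eqref{eq:alpha_pl} then force all cross-coupling coefficients among the four components into that slack, while the ``leakage'' terms $\E[f(\bar d_k)-f^*]$ and $\E[\|\nabla f(\bar x_k)\|^2]\le 2L\,\E[f(\bar x_k)-f^*]$ produced by Lemmas~\ref{lem:zk}--\ref{lem:zk_nf} are absorbed into the $\cL_k$ part of $\phi_k$ (the factors $(1-\beta)^2$ attached to these terms, together with the weight of order $\alpha^2/(1-\trw)^2$ inside $\cH_k$, make $\phi_k$ carry the scaling $\tfrac{n(1-\beta)^2}{L}\cL_k$ up to absolute constants). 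Because $\alpha\le(1-\beta)/(3\mu)$ gives $\frac{5+\beta}{6}<1-\frac{\alpha\mu}{4}$ with gap at least $\frac{1-\beta}{12}$, unrolling $\cH_{k+1}\le\frac{5+\beta}{6}\cH_k+\phi_k$ and inserting the bound on $\cL_k$ from the first step produces
\begin{equation*}
    \cH_k \le 10\prt{\frac{5+\beta}{6}}^{k}\cH_0 + \frac{C_1\,n(1-\beta)}{L}\prt{1-\frac{\alpha\mu}{4}}^{k}\cL_0 + (\text{steady-state noise}),
\end{equation*}
with $C_1$ an absolute constant.

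Third, I would pass to the quantity of interest. By $L$-smoothness of $f$ and $\sumn(x_{i,k}-\bar x_k)=\0$,
\begin{equation*}
    \frac1n\sumn\E[f(x_{i,k})-f^*] \le \E[f(\bar x_k)-f^*] + \frac{L}{2n}\E[\|\Pi\x_k\|^2] \le \E[f(\bar x_k)-f^*] + \frac{L}{2n}\cH_k,
\end{equation*}
using $\E[\|\Pi\x_k\|^2]\le\cR_k^x\le\cH_k$ from Lemma~\ref{lem:cons_R}. For $\E[f(\bar x_k)-f^*]$ I would apply Lemma~\ref{lem:fx_fd} (applicable since \eqref{eq:alpha_pl} lies well below $(1-\beta)/(3\beta L)$) together with $\|\nabla f(\bar x_k)\|^2\le 2L(f(\bar x_k)-f^*)$; after absorbing the resulting $\frac{2\alpha\beta L}{1-\beta}\E[f(\bar x_k)-f^*]$ (which is below $\tfrac12$ of the left-hand side) and controlling the momentum remainder $\frac{\alpha\beta}{1-\beta}\E[\|\bar z_{k-1}\|^2]$ through Lemma~\ref{lem:zk} and the smallness of $\alpha$, one gets $\E[f(\bar x_k)-f^*]\le c_2\cL_k$ up to lower-order corrections, $c_2$ absolute. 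Substituting the bounds on $\cL_k$ and $\cH_k$ and separating the geometric part from the $\alpha$-dependent part then yields the stated form: the $(1-\alpha\mu/4)^k$ contributions collect into $33\cL_0$, the $((5+\beta)/6)^k$ contribution into $\tfrac{5L}{n}\cH_0$, and the remaining errors into $\tfrac{20\alpha L(2C\sigfn+\sigma^2)}{n\mu}$ plus the final bracketed $\alpha^2$ term, whose numerical constants are packaged through $\cS_1$ and $\cS_2$. Finally, \eqref{eq:cH0_bound} follows by evaluating \eqref{eq:lya_hk} at $k=0$ with $\z_{-1}=\0$ and $\x_{-1}=\x_0$ (so the $\bar z$-summand vanishes and the tracking-error summand equals $\|\nabla F(\1\bar x_0^{\T})\|^2=\sumn\|\nabla f_i(\bar x_0)\|^2$), then substituting the estimates \eqref{eq:cR0} for $\cR_0^x,\cR_0^y$ and bounding $\E[\|\Pi\y_0\|^2]=(1-\beta)^2\E[\|\Pi\g_0\|^2]$ via Assumptions~\ref{as:smooth}--\ref{as:abc}, which produces the $C\Delta_0+2C\sigfn+\sigma^2$ factor.

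\textbf{Main obstacle.} The delicate step is the second one: showing that the particular weights in \eqref{eq:lya_hk} compress the four coupled error recursions into a single contraction at rate exactly $\frac{5+\beta}{6}$. This requires (i) all cross-coupling and leakage coefficients to fit inside the $O(1-\trw)$ slack between $\frac{5+\beta}{6}$ and $\max\crk{\trw,\tfrac{1+\trw}{2},\tfrac{1+\beta}{2},\beta}$, which is possible only because of the lower bound $\beta\ge 1-\frac{11}{12}(1-\trw)$ and the five coupled upper bounds on $\alpha$; and (ii) honest bookkeeping of the numerical constants ($c_0=14$, the $405$, $54$, and the various $9,12,20,\dots$) so that the $\cL_k$-coupling term in the $\cH_k$-recursion carries the right $n/L$ scaling to collapse, after multiplication by $L/(2n)$, into an absolute constant. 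The reverse coupling does not arise: $\cL_k$ already contains weighted copies of $\cR_k^x,\cR_k^y,\E[\|\bar z_{k-1}\|^2]$ and $\E[\|\z_{k-1}-\nabla F(\Barx{k-1})\|^2]$, so Lemma~\ref{lem:cL_pl} closes by itself.
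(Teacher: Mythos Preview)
Your proposal is correct and follows essentially the same three-stage architecture as the paper's proof in Appendix~\ref{app:pl}: unroll $\cL_k$ via Lemma~\ref{lem:cL_pl}, establish and unroll a $(5+\beta)/6$-contraction for $\cH_k$ by combining Lemmas~\ref{lem:zk}--\ref{lem:cons_R} (the paper re-derives the weights $\cC_5,\cC_6,\cC_7$ in \eqref{eq:lya_hk} from scratch, whereas you take them as given), and then assemble $\frac1n\sumn\E[f(x_{i,k})-f^*]$ through $L$-smoothness and Lemma~\ref{lem:fx_fd}. Two small points where the paper is more explicit: (i) to handle $\E[\|\nabla f(\bar x_k)\|^2]$ inside the $\cH_k$ recursion the paper uses $\|\nabla f(\bar x_k)\|^2\le 4L[f(\bar d_k)-f^*]+\tfrac{2\alpha^2\beta^2L^2}{(1-\beta)^2}\|\bar z_{k-1}\|^2$ (eq.~\eqref{eq:nx_fx}) rather than your $2L(f(\bar x_k)-f^*)$, which avoids any circularity and keeps the $\bar z$ remainder one order of $\alpha$ smaller; (ii) the ``momentum remainder'' $\tfrac{\alpha\beta}{1-\beta}\E[\|\bar z_{k-1}\|^2]$ is not absorbed into $c_2\cL_k$ alone --- the paper separately unrolls Lemma~\ref{lem:zk} (eq.~\eqref{eq:zkbar_bound}), producing both a $(1-\alpha\mu/4)^k\cL_0$ and a $((5+\beta)/6)^k\cH_0$ contribution, the latter feeding into the $\tfrac{5L\cH_0}{n}$ constant. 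Also, the slack between $(5+\beta)/6$ and the $\bar z$-rate $(1+\beta)/2$ is $\tfrac{1-\beta}{3}$, not $O(1-\trw)$; this matters for the $\cC_6$ coupling check (eq.~\eqref{eq:c6_can}) but your stepsize bounds still close it.
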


    \begin{proof}
        See Appendix \ref{app:pl}.
    \end{proof}

    Based on Theorem \ref{thm:pl}, we can first compare the non-exponentially decreasing terms in the error bounds for different distributed stochastic gradient methods under a constant stepsize $\alpha$; see Table \ref{tab:comp_pl}. It can be seen that the DSMT method exhibits the lowest static error when compared to the state-of-art methods by choosing $\beta = \order{1-\sqrt{(1-\lambda)/n}}$, $n\geq 2$.
    
    \begin{table}[h]
    \setlength{\tabcolsep}{80pt}
        \centering
        \begin{tabular}{@{}cc@{}}
        \toprule
        Method                           & Static Error                                                                                       \\ \midrule
        DSGD \cite{pu2021sharp}          & $\order{\frac{\alpha}{n} + \frac{\alpha^{3/2}}{1-\lambda} + \frac{\alpha^2}{(1-\lambda)^2}}$ \\
        ED \cite{alghunaim2021unified}   & $\order{\frac{\alpha}{n} + \frac{\alpha^2}{1-\lambda} + \frac{\alpha^4}{n(1-\lambda)^3}}$    \\
        DSGT \cite{alghunaim2021unified} & $\order{\frac{\alpha}{n} + \frac{\alpha^2}{1-\lambda} + \frac{\alpha^4}{n(1-\lambda)^4}}$    \\ \midrule
        DSMT                             & $\order{\frac{\alpha}{n} + \brk{\frac{1-\beta}{1-\lambda} + \frac{1}{n(1-\beta)}}\alpha^2}$                               \\ \midrule 
        \makecell[c]{DSMT\\ $\beta = \orderi{1-\sqrt{(1-\lambda)/n}}$}                            & $\order{\frac{\alpha}{n} + \frac{\alpha^2}{\sqrt{n(1-\lambda)}}}$                               \\ 
        \bottomrule
        \end{tabular}
        \caption{Comparison of different methods regarding the non-exponentially decreasing terms for minimizing smooth objective functions satisfying the PL condition.}
        \label{tab:comp_pl}
    \end{table}

    Table \ref{tab:comp_pl} also shows that proper choice of $\beta$ improves the final error of DSMT, demonstrating the benefits of momentum acceleration in decentralized optimization methods.

    By considering specific stepsize $\alpha$ and parameter $\beta$, we can write down the following convergence result of DSMT based on Theorem \ref{thm:pl}.
    \begin{corollary}
        \label{cor:order_pl}
        Let the conditions in Theorem \ref{thm:pl} hold. Furthermore, let the stepsize $\alpha$ and parameter $\beta$ satisfy 
        \begin{align}
            \label{eq:alpha_order}
            \alpha = \alpha_1 = \frac{4}{\mu K}\ln\prt{\frac{33n\mu^2 K \cL_0}{(2C\sigfn + \sigma^2) L}},\ \beta = 1 - \frac{(1-\trw)}{\sqrt{n}}, n\geq 2,
        \end{align}
        {for some given $K\geq 1$.}
        Then, the final iterates $x_{i,K}$ for all $i\in\cN$ satisfy 
        \begin{equation}
            \label{eq:fK_bound}
            \begin{aligned}
                \frac{1}{n}\sumn\E\brk{f(x_{i,K}) - f^*} &\leq 33\cL_0\exp\prt{-\frac{\alpha\mu K}{4}} + \frac{5L\cH_0}{n}\exp\prt{-\frac{(1-\trw) K}{\sqrt{n}}}\\
                &\quad + \torder{\frac{(2C\sigfn + \sigma^2)}{nK} + \frac{(2C\sigfn + \sigma^2)}{\sqrt{n(1-\lambda)} K^2} },
        \end{aligned}
        \end{equation}
        where $\torderi{\cdot}$ hides the term $\ln\prt{\frac{33n\mu^2 K \cL_0}{(2C\sigfn + \sigma^2) L}}$ and some constants that are independent of $n$ and $(1-\lambda)$. In particular, we have 
        \small
        \begin{equation}
            \label{eq:fK_bound_order}
            \begin{aligned}
                &\frac{1}{n}\sumn\E\brk{f(x_{i,K}) - f^*}\\
                & = \tilde{\mathcal{O}}\left\{ \brk{\norm{\Pi\x_0}^2 + \frac{(\Delta_0 + C\sigfn + \sigma^2)}{\mu^2 K^2} + \frac{\sumn\norm{\nabla f_i(\bar{x}_0)}^2L^2}{\mu^2 (1-\trw)^2 K^2 }}\exp\prt{-\frac{(1-\trw) K}{6\sqrt{n}}}\right.\\
                &\left. + \brk{\Delta_0 + \frac{ L^2\norm{\Pi\x_0}^2}{n\mu (1-\trw) K} + \frac{L^2(2C\sigfn + \sigma^2)}{n\mu^3 K^3} + \frac{L^2\sumn\norm{\nabla f_i(\bar{x}_0)}^2}{\mu^3 (1-\trw)^3 K^3}}\exp\prt{-\frac{(1-\trw)\mu K}{4L}}\right.\\
                &\left. + \frac{L(C\sigfn + \sigma^2)}{n\mu^2 K} + \frac{L(C\sigfn + \sigma^2)}{\sqrt{n}(1-\trw)\mu^2 K^2} \right\}.
            \end{aligned}
        \end{equation}\normalsize

    \end{corollary}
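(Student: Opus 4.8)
This is a specialization of Theorem~\ref{thm:pl}: the plan is to substitute $\alpha=\alpha_1$ and $\beta = 1-(1-\trw)/\sqrt n$ from \eqref{eq:alpha_order} into the bound of Theorem~\ref{thm:pl}, bound the two exponentially decreasing prefactors, and collect the polynomial-in-$\alpha_1$ residuals, tracking the $\torder{\cdot}$ bookkeeping throughout. First I would verify admissibility: with this $\beta$ one has $1-\beta=(1-\trw)/\sqrt n$, so $\beta\geq 1-\tfrac{11}{12}(1-\trw)$ for every $n\geq 2$; and since $\alpha_1=\torder{1/(\mu K)}\to 0$ (the logarithm being what $\torder{\cdot}$ hides), $\alpha_1$ meets every constraint in \eqref{eq:alpha_pl} once $K$ is large enough, while for smaller $K$ one replaces $\alpha_1$ by its minimum with the right-hand side of \eqref{eq:alpha_pl}; in the latter case the cap determines the rate of the residual exponential term kept in \eqref{eq:fK_bound_order}.

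\textbf{The two geometric factors.} The exponent in $\alpha_1$ is chosen precisely so that $(1-\alpha_1\mu/4)^K\le\exp(-\alpha_1\mu K/4)=\tfrac{(2C\sigfn+\sigma^2)L}{33n\mu^2 K\cL_0}$, hence $33\cL_0(1-\alpha_1\mu/4)^K=\tfrac{(2C\sigfn+\sigma^2)L}{n\mu^2 K}=\torder{(2C\sigfn+\sigma^2)/(nK)}$; this is the cancellation that removes $\cL_0$ from the statement (and, when the cap is active instead, produces the $\exp(-(1-\trw)\mu K/(4L))$-type factor multiplying the expansion of $33\cL_0$). For the second factor, $\tfrac{5+\beta}{6}=1-\tfrac{1-\beta}{6}=1-\tfrac{1-\trw}{6\sqrt n}$, so $\big(\tfrac{5+\beta}{6}\big)^K\le\exp\big(-\tfrac{(1-\trw)K}{6\sqrt n}\big)$, which yields the $\tfrac{5L\cH_0}{n}\exp(-(1-\trw)K/\sqrt n)$-type term in \eqref{eq:fK_bound} (the numerical constant $\tfrac16$ absorbed into the notation).

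\textbf{The polynomial residuals.} Since $\alpha_1=\torder{1/(\mu K)}$, the linear term $\tfrac{20\alpha_1 L(2C\sigfn+\sigma^2)}{n\mu}$ is $\torder{(2C\sigfn+\sigma^2)/(nK)}$. For the bracketed $\alpha_1^2$ term, substituting $1-\beta=(1-\trw)/\sqrt n$ into $\cS_1$ and $\cS_2$ gives $\cS_1=\tfrac{18c_0^2}{n}+\tfrac{L+2C}{L\sqrt n}=\order{1/\sqrt n}$ and hence $\cS_2=\order{1/\sqrt n}$; consequently each of $\tfrac{48L\cS_1}{\mu}$, $\tfrac{504c_0^2(1+\cS_2)(1-\beta)}{1-\trw}$ and $\tfrac{32(6L+C)(1-\trw)}{n\mu(1-\beta)}$ is $\order{1/\sqrt n}$, so multiplying by $\tfrac{4\alpha_1^2 L(2C\sigfn+\sigma^2)}{1-\trw}$ and using $\alpha_1^2=\torder{1/(\mu^2K^2)}$ bounds this term by $\torder{\tfrac{2C\sigfn+\sigma^2}{\sqrt n(1-\trw)K^2}}=\torder{\tfrac{2C\sigfn+\sigma^2}{\sqrt{n(1-\lambda)}\,K^2}}$, using $1-\trw\sim\order{\sqrt{1-\lambda}}$ from \eqref{eq:trw}. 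Collecting the four pieces gives \eqref{eq:fK_bound}.

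\textbf{Expanding the prefactors and the main obstacle.} Finally, \eqref{eq:fK_bound_order} follows by inserting the explicit bounds \eqref{eq:cL0_bound} for $\cL_0$ and \eqref{eq:cH0_bound} for $\cH_0$ into the two exponential prefactors, replacing $\alpha_1^2=\torder{1/(\mu^2K^2)}$, $\alpha_1^3=\torder{1/(\mu^3K^3)}$ and $1-\beta=(1-\trw)/\sqrt n$, and discarding sub-dominant factors (using $n\ge1$ and $\mu\le L$). The step I expect to be delicate is not conceptual but organizational: keeping the $\torder{\cdot}$ accounting consistent across all the terms, and in particular determining which of the five upper bounds in \eqref{eq:alpha_pl} is binding for a given $K$ so that the retained exponential $\exp(-(1-\trw)\mu K/(4L))$ carries the correct rate, while checking that the simplifications $\cS_1,\cS_2=\order{1/\sqrt n}$ and $1-\beta=(1-\trw)/\sqrt n$ do not let any factor of $1/(1-\lambda)$ worse than $1/\sqrt{n(1-\lambda)}$ slip through.
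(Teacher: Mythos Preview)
Your proposal is correct and matches the paper's proof: both substitute $\beta=1-(1-\trw)/\sqrt n$ to reduce $\cS_1,\cS_2$ to $\order{1/\sqrt n}$, bound the two geometric factors by exponentials, and then split into the two cases $\bar\alpha<\alpha_1$ versus $\bar\alpha\ge\alpha_1$ (what you call ``the cap being active'') to obtain \eqref{eq:fK_bound} and \eqref{eq:fK_bound_order}. The paper makes this case distinction more explicit---writing out separate displays for each case before combining---but the logic and bookkeeping you outline are the same.
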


    \begin{proof}
        Since $\beta = 1 - (1-\trw)/\sqrt{n}$, the constants $\cS_1$ and $\cS_2$ become
        \begin{align*}
            \cS_1 = \frac{18c_0^2}{n} + \frac{L+2C}{\sqrt{n}L},\; \cS_2 = \frac{L}{n\mu} +\frac{18c_0^2 L}{n\mu} + \frac{L+2C}{\sqrt{n}\mu}.
        \end{align*}
        Denote 
        \begin{align*}
            \cS_3:= \frac{48L}{\mu}\prt{\frac{18c_0^2}{\sqrt{n}} + \frac{L+2C}{L}} + 504c_0^2 (1+\cS_1) + \frac{32(6L+C)}{\mu}\sim\order{1},
        \end{align*}
        we have 
        \begin{equation}
            \label{eq:fK}
            \begin{aligned}
                \frac{1}{n}\sumn\E\brk{f(x_{i,K}) - f^*} &\leq 33\cL_0\exp\prt{-\frac{\alpha\mu K}{4}} + \frac{5L\cH_0}{n}\exp\prt{-\frac{(1-\trw) K}{6\sqrt{n}}}\\
                &\quad \frac{20\alpha L(2C\sigfn + \sigma^2)}{n\mu} + \frac{4\alpha^2\cS_3 L(2C\sigfn + \sigma^2)}{\sqrt{n}(1-\trw)}.
            \end{aligned}
        \end{equation}

        Let $\bar{\alpha}$ satisfy \eqref{eq:alpha_pl}. It suffices to discuss the following two cases:

        \textbf{Case I:} If $\bar{\alpha}< \alpha_1$, then we substitute $\bar{\alpha}$ into \eqref{eq:fK} and obtain 
        \begin{multline}
            \label{eq:small_K}
                \frac{1}{n}\sumn\E\brk{f(x_{i,K}) - f^*} \leq 33\cL_0\exp\prt{-\frac{\bar{\alpha}\mu K}{4}} + \frac{5L\cH_0}{n}\exp\prt{-\frac{(1-\trw) K}{6\sqrt{n}}}\\
                \quad + \frac{80 L(2C\sigfn + \sigma^2)\ln\prt{\frac{33n\mu^2 K \cL_0}{(2C\sigfn + \sigma^2) L}}}{n\mu^2 K} + \frac{64L\cS_3(2C\sigfn + \sigma^2)\ln^2\prt{\frac{33n\mu^2 K \cL_0}{(2C\sigfn + \sigma^2) L}}}{\sqrt{n}(1-\trw)\mu^2 K^2}.
        \end{multline}
In this case, we have $\bar{\alpha} \sim \orderi{(1-\trw)/[\sqrt{n}(L+C)]}$. Then,
        \small
        \begin{equation}
            \label{eq:small_K_order}
            \begin{aligned}
                &\frac{1}{n}\sumn\E\brk{f(x_{i,K}) - f^*} \\
                &= \tilde{\mathcal{O}}\left\{ \brk{\norm{\Pi\x_0}^2 + \frac{(C\Delta_0 + C\sigfn + \sigma^2)}{\mu^2 K^2} + \frac{\sumn\norm{\nabla f_i(\bar{x}_0)}^2}{\mu^2 (1-\trw)^2 K^2 }}\exp\prt{-\frac{(1-\trw) K}{6\sqrt{n}}}\right.\\
                &\left. + \brk{\Delta_0 + \frac{ L^2\norm{\Pi\x_0}^2}{n\mu (1-\trw) K} + \frac{L^2(2C\sigfn + \sigma^2)}{n\mu^3 K^3} + \frac{L^2\sumn\norm{\nabla f_i(\bar{x}_0)}^2}{\mu^3 (1-\trw)^3 K^3}}\exp\prt{-\frac{(1-\trw)\mu K}{4\sqrt{n}(L+C)}}\right.\\
                &\left. + \frac{L(C\sigfn + \sigma^2)}{n\mu^2 K} + \frac{L(C\sigfn + \sigma^2)}{\sqrt{n}(1-\trw)\mu^2 K^2} \right\}.
            \end{aligned}
        \end{equation}\normalsize

        \textbf{Case II:} If $\bar{\alpha}\geq \alpha_1$, then we substitute $\alpha_1$ into \eqref{eq:fK} and obtain 
        \begin{equation}
            \label{eq:large_K}
            \begin{aligned}
                &\frac{1}{n}\sumn\E\brk{f(x_{i,K}) - f^*} \leq \frac{(2C\sigfn + \sigma^2)L}{n\mu^2 K} + \frac{5L\cH_0}{n}\exp\prt{-\frac{(1-\trw) K}{6\sqrt{n}}}\\
                &\quad + \frac{80 L(2C\sigfn + \sigma^2)\ln\prt{\frac{33n\mu^2 K \cL_0}{(2C\sigfn + \sigma^2) L}}}{n\mu^2 K} + \frac{64L\cS_3(2C\sigfn + \sigma^2)\ln^2\prt{\frac{33n\mu^2 K \cL_0}{(2C\sigfn + \sigma^2) L}}}{\sqrt{n}(1-\trw)\mu^2 K^2}.
            \end{aligned}
        \end{equation}
In this case, we have from \eqref{eq:large_K} that 
        \begin{equation}
            \label{eq:large_K_order}
            \begin{aligned}
                &\frac{1}{n}\sumn\E\brk{f(x_{i,K}) - f^*} = \tilde{\mathcal{O}}\left\{\frac{L(C\sigfn + \sigma^2)}{n\mu^2 K} + \frac{L(C\sigfn + \sigma^2)}{\sqrt{n}(1-\trw)\mu^2 K^2}\right.\\
                &\left. + \brk{\norm{\Pi\x_0}^2 + \frac{(\Delta_0 + C\sigfn + \sigma^2)}{\mu^2 K^2} + \frac{\sumn\norm{\nabla f_i(\bar{x}_0)}^2L^2}{\mu^2 (1-\trw)^2 K^2 }}\exp\prt{-\frac{(1-\trw) K}{6\sqrt{n}}} \right\}.
            \end{aligned}
        \end{equation}
Combining \eqref{eq:small_K} and \eqref{eq:large_K} yields the desired result \eqref{eq:fK_bound}. Combining \eqref{eq:small_K_order} and \eqref{eq:large_K_order} leads to the desired result \eqref{eq:fK_bound_order}.
    \end{proof}

    Finally, we are able to derive the transient time of DSMT under smooth objective functions satisfying the PL condition.
    \begin{corollary}
        \label{cor:kt_pl}
        Let the conditions in Corollary \ref{cor:order_pl} hold. Then the transient time $\KTP$ required for DSMT to achieve comparable performance as centralized SGD method when minimizing smooth objective functions satisfying the PL condition behaves as 
        \begin{equation}
            \label{eq:KTP_full}
            \begin{aligned}
                \KTP = \torder{\sqrt{\frac{n}{1-\lambda}}},
            \end{aligned}
        \end{equation}
        where $\torderi{\cdot}$ hides the following logarithm terms and other constants that are independent of $n$ and $(1-\lambda)$:
        \begin{align*}
            \max&\crk{\ln\prt{n\norm{\Pi\x_0}^2 K}, \ln\prt{\frac{33n\mu^2 K \cL_0}{(2C\sigfn + \sigma^2) L}}, \ln\prt{\frac{n}{K}}, \ln\prt{\frac{n\sumn\norm{\nabla f_i(\bar{x}_0)}^2}{(1-\lambda)^{3/2}{K}}},\right.\\
            &\left.\quad  \ln\prt{\frac{\norm{\Pi\x_0}^2}{\sqrt{1-\lambda}}}}.
        \end{align*}

    \end{corollary}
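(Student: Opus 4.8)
\textit{Proof idea.} The plan is to start from the final-iterate bound \eqref{eq:fK_bound} of Corollary \ref{cor:order_pl}, in which the stepsize $\alpha=\alpha_1$ and the momentum parameter $\beta$ have already been tuned to the horizon via \eqref{eq:alpha_order}, and to locate the smallest $K$ (up to logarithmic factors) past which every term of that bound, evaluated at any horizon $k\ge K$, is dominated by the centralized rate $\order{1/(nk)}$; by the definition \eqref{eq:KTP} this threshold is $\KTP$. The bound has three kinds of terms: the ``statistical'' term $\torder{(2C\sigfn+\sigma^2)/(nK)}$, which already equals the target rate and imposes no constraint; a genuinely polynomial term $\torder{(2C\sigfn+\sigma^2)/(\sqrt{n(1-\lambda)}K^{2})}$; and the two exponentially decaying terms $33\cL_0\exp(-\alpha\mu K/4)$ and $(5L\cH_0/n)\exp(-(1-\trw)K/\sqrt{n})$. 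I would also keep in mind the two regimes of Corollary \ref{cor:order_pl}: Case II ($\bar{\alpha}\ge\alpha_1$) governs all large $K$, while Case I ($\bar{\alpha}<\alpha_1$), with $\bar{\alpha}\sim\orderi{(1-\trw)/[\sqrt{n}(L+C)]}$, arises only for comparatively small $K$.

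First I would handle the polynomial term: $(2C\sigfn+\sigma^2)/(\sqrt{n(1-\lambda)}K^2)\le \order{1/(nK)}$ holds exactly when $\sqrt{n}\le\sqrt{1-\lambda}\,K$, i.e. $K\gtrsim\sqrt{n/(1-\lambda)}$, which is the term that produces the advertised rate. Second I would dispatch the exponential terms using the elementary fact that $a\,e^{-bK}\le c/K$ whenever $K\ge (2/b)\ln(2a/(bc))$ (which follows from $e^{-bK/2}\le 2/(bK)$). For the $\cH_0$ term we have $b\sim(1-\trw)/\sqrt{n}$ and $a=5L\cH_0/n$ with $\cH_0$ bounded in \eqref{eq:cH0_bound} in terms of $\norm{\Pi\x_0}^2$, $\Delta_0$, $C\sigfn+\sigma^2$, and $\sumn\norm{\nabla f_i(\bar{x}_0)}^2/(1-\trw)^2$; this gives a threshold of order $\frac{\sqrt{n}}{1-\trw}$ times a logarithm of those quantities, which after substituting $1-\trw\sim\sqrt{1-\lambda}$ is the source of the $\ln(n\norm{\Pi\x_0}^2 K)$, $\ln(n/K)$, $\ln(n\sumn\norm{\nabla f_i(\bar{x}_0)}^2/((1-\lambda)^{3/2}K))$ and $\ln(\norm{\Pi\x_0}^2/\sqrt{1-\lambda})$ factors listed in the statement. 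For the $\cL_0$ term, in Case II the choice of $\alpha_1$ in \eqref{eq:alpha_order} makes $33\cL_0\exp(-\alpha_1\mu K/4)$ equal exactly to $(2C\sigfn+\sigma^2)L/(n\mu^2K)=\order{1/(nK)}$, so no new constraint arises there; in Case I the same elementary fact with $b\sim(1-\trw)\mu/[\sqrt{n}(L+C)]$ yields the threshold $\frac{\sqrt{n}(L+C)}{(1-\trw)\mu}\ln(33n\mu^2 K\cL_0/((2C\sigfn+\sigma^2)L))$.

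Then I would take the maximum of all these thresholds; each is $\torder{\sqrt{n}/(1-\trw)}$, and using $1-\trw\sim\order{\sqrt{1-\lambda}}$ from \eqref{eq:trw} converts $\sqrt{n}/(1-\trw)$ into $\sqrt{n/(1-\lambda)}$, which is \eqref{eq:KTP_full}, with the hidden logarithmic factors being exactly those enumerated in the corollary. The work here is bookkeeping rather than conceptual; the main points of care are (i) making the exponential-versus-polynomial comparisons precise enough that the resolved logarithms coincide with (or are dominated by) the stated ones, (ii) verifying that the Case I thresholds — which appear only for small $K$ — are still $\torder{\sqrt{n/(1-\lambda)}}$ and hence do not enlarge $\KTP$, and (iii) checking that the $K^{-2}$ and $K^{-3}$ polynomial pieces inside the brackets of \eqref{eq:fK_bound_order}, once combined with their $n$- and $(1-\lambda)$-dependent coefficients, all yield thresholds no larger than $\sqrt{n/(1-\lambda)}$.
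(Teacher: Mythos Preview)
Your proposal is correct and follows essentially the same approach as the paper: factor the bound of Corollary \ref{cor:order_pl} into $\order{1/(nK)}$ times a residual, determine the smallest $K$ for which each piece of that residual is $\torder{1}$, and convert via $(1-\trw)\sim\orderi{\sqrt{1-\lambda}}$. The paper's proof is considerably terser---it starts directly from the fully expanded form \eqref{eq:fK_bound_order} rather than \eqref{eq:fK_bound}, writes the factored expression in one display, and then simply invokes the definition of $\KTP$---whereas you spell out the exponential-versus-polynomial comparison and the Case I/Case II distinction explicitly; but the underlying argument is the same.
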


    \begin{proof}
        We hide the constants that are independent of $n$ and $(1-\lambda)$ in the following. Based on \eqref{eq:fK_bound_order}, we have 
        \begin{equation}
            \label{eq:fK1}
            \begin{aligned}
                &\frac{1}{n}\sumn\E\brk{f(x_{i,K}) - f^*} = \order{\frac{1}{nK}} \tilde{\mathcal{O}}\left\{1 + \frac{\sqrt{n}}{(1-\trw)K} \right.\\
                &\left.\brk{nK\norm{\Pi\x_0}^2 + \frac{n}{K} + \frac{n\sumn\norm{\nabla f_i(\bar{x}_0)}^2}{(1-\trw)^2K}}\exp\prt{-\frac{(1-\trw) K}{\sqrt{n}}}\right.\\
                &\left. + \brk{nK + \frac{ \norm{\Pi\x_0}^2}{(1-\trw)} + \frac{1}{K^2} + \frac{n\sumn\norm{\nabla f_i(\bar{x}_0)}^2}{(1-\trw)^3 K^2}}\exp\prt{-\frac{(1-\trw)K}{\sqrt{n}}}\right\}.
            \end{aligned}
        \end{equation}
        Invoking the definition of $\KTP$ and the relation $(1-\trw)\sim\orderi{\sqrt{1-\lambda}}$ in \eqref{eq:trw} yields the desired result \eqref{eq:KTP_full}.
    \end{proof}

    \section{Simulations}
    \label{sec:sim}

    This section presents two numerical examples that illustrate the performance of DSMT compared with the existing methods over a ring graph (Figure \ref{fig:ring}). We evaluate these algorithms on a strongly convex problem \eqref{eq:logistic} (satisfying the PL condition) and a general nonconvex problem \eqref{eq:ncvx_logistic} to verify the theoretical findings. Overall, the results demonstrate that the incorporation of either the momentum tracking technique or Loopless Chebyshev Acceleration (LCA) enhances the performance of distributed stochastic gradient methods. Furthermore, the DSMT method outperforms existing methods due to the effective combination of these two techniques.

    \begin{figure}
        \centering
        \includegraphics[width=0.5\textwidth]{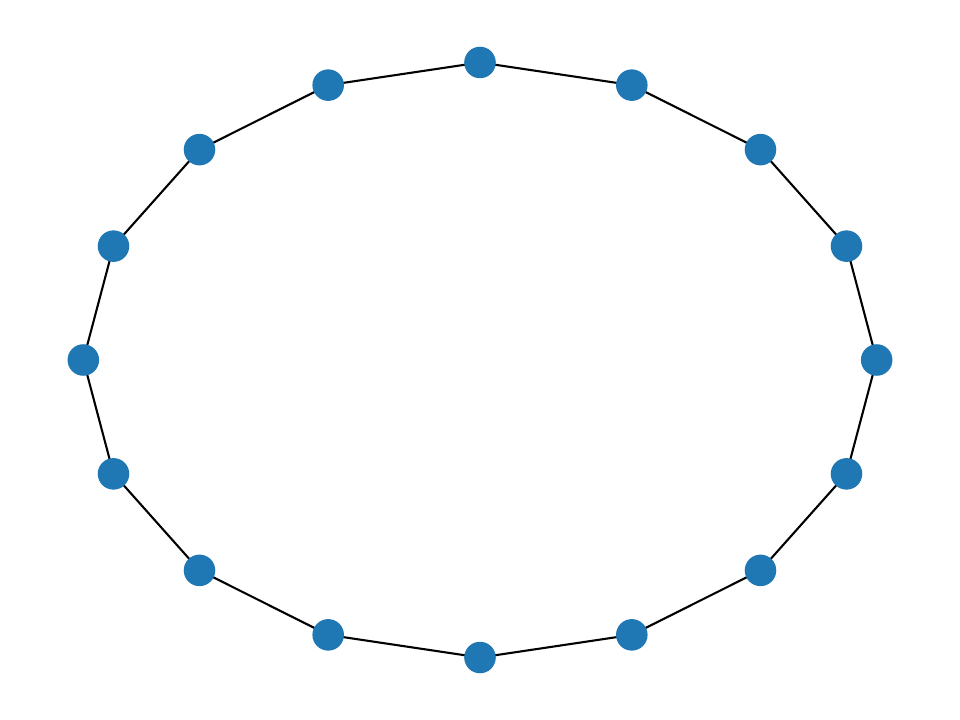}
        \caption{Illustration of ring graph topology with $n = 16$.}
        \label{fig:ring}
    \end{figure}

    \subsection{Logistic Regression}
    \label{subsec:sims_scvx}
    We consider a binary classification problem using logistic regression \eqref{eq:logistic} on the CIFAR-10 \cite{krizhevsky2009learning} dataset. 
    Each agent possesses a distinct local dataset $\mathcal{S}_i{= \crki{(u_j, v_j)}}$ selected from the whole dataset $\mathcal{S}$. {Here, the variable $u_j\in\R^p$ denotes the image input and $v_j\in\R$ represents the label.} In particular, we consider a \textit{heterogeneous} data setting, where data samples are sorted based on their labels and partitioned among the agents. The classifier can then be obtained by solving the following optimization problem using all the agents' local datasets $\mathcal{S}_i, i=1,2,...,n$:

    \begin{equation}
        \label{eq:logistic}
        \begin{aligned}
            &\min_{x\in\R^{p}} f(x) = \frac{1}{n}\sum_{i=1}^n f_i(x),\;f_i(x) := \frac{1}{|\mathcal{S}_i|} \sum_{j\in\mathcal{S}_i} \log\left[1 + \exp(-x^{\T}u_jv_j)\right] + \frac{\rho}{2}\norm{x}^2,
        \end{aligned}
    \end{equation}
    where $\rho$ is set as $0.2$.

    We evaluate the performance of several methods, including DSMT, DSGT \cite{pu2021sharp}, EDAS \cite{huang2021improving}, DSGD \cite{pu2021sharp}, DSGT-HB \cite{gao2023distributed}, centralized SGD (CSGD), and centralized SGDM (CSGDM), in the context of classifying airplanes and trucks on the CIFAR-10 dataset using a constant step size. The results are presented in Figure \ref{fig:logistic}. Additionally, to facilitate comparison, we implement the DSMT method without the LCA technique, denoted as DSMT\_noLCA in subsequent discussions.
    
    \begin{figure}[htbp]
        \centering
        \subfloat[Ring graph, $n=100$, $1-\lambda = 6.6\times10^{-4}$.]{\includegraphics[width=0.49\textwidth]{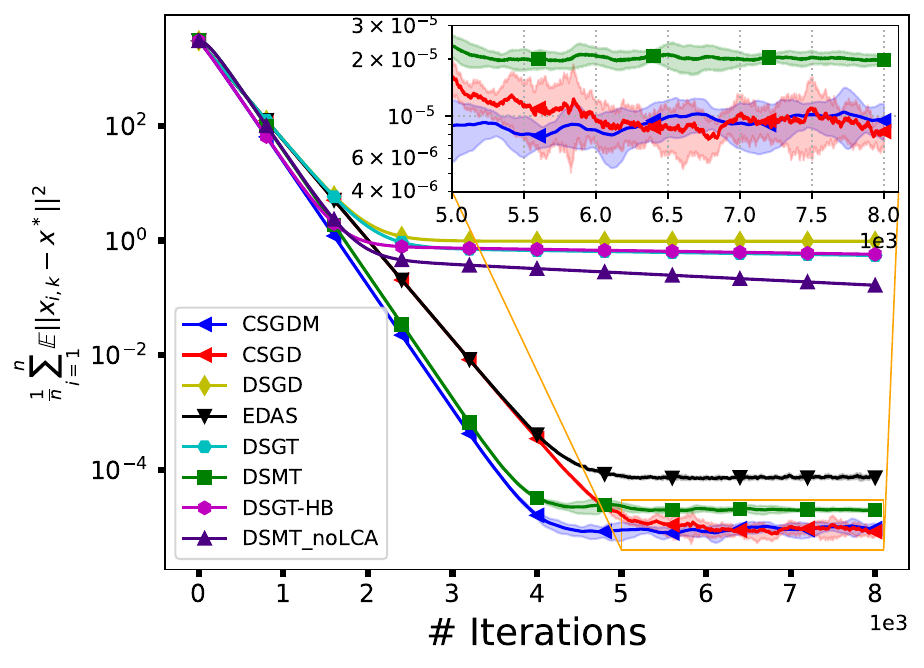}\label{fig:logistic_n100}}
        \subfloat[Ring graph, $n=50$, $1-\lambda = 2.6\times 10^{-3}$.]{\includegraphics[width=0.49\textwidth]{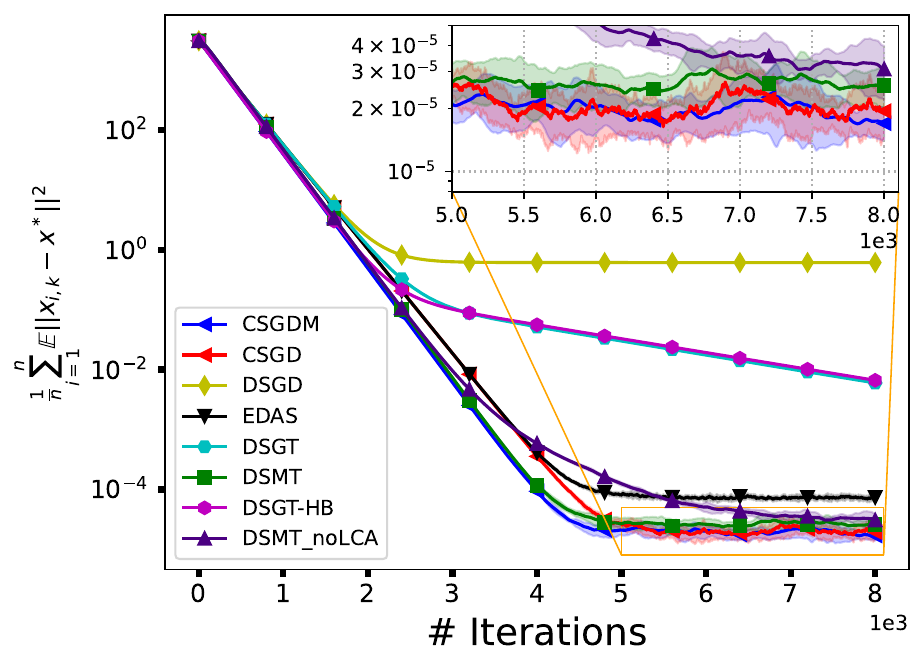}\label{fig:logistic_n50}}
        \caption{Comparison among DSMT, DSGT, EDAS, DSGD, CSGD, and CSGDM for solving Problem \eqref{eq:logistic} on the CIFAR-10 dataset using a constant stepsize. The stepsize is set as $\alpha = 0.01$ for all the methods. The momentum parameter is set as $\beta =\trw$ for DSMT, DSMT\_noLCA, and SGDM. The results are averaged over 10 repeated experiments. The shaded region represents the standard deviation.}
        \label{fig:logistic}
    \end{figure}

    The performance of decentralized methods shown in Figures \ref{fig:logistic_n100} and \ref{fig:logistic_n50} reveals that, as the network size decreases (resulting in a larger value of $(1-\lambda)$), the methods tend to exhibit more comparable performance to centralized SGD. 
    Among these decentralized methods, DSMT achieves the most similar accuracy compared to centralized methods, especially when the network topology is not well-connected (Figure \ref{fig:logistic_n100}). 

    Comparing those decentralized methods incorporating momentum acceleration, they all behave more similar to CSGDM than CSGD, aligning with the arguments in Subsection \ref{subsec:motivation}. However,
    a comparison between DSMT\_noLCA and DSGT-HB reveals the importance of appropriately employing the momentum technique to enhance the practical performance. Straightforward combination may not yield significant improvements, as seen from the performance comparison of DSGT and DSGT-HB in Figure~\ref{fig:logistic}. Such a comparison implies the effectiveness of the momentum tracking technique.

    Comparing the performance of DSMT and DSMT\_noLCA in Figure \ref{fig:logistic}, it can be concluded that the LCA technique indeed accelerates the convergence process. Such an effect becomes more pronounced when the network topology degrades, as evidenced in the transition from Figure \ref{fig:logistic_n50} to Figure \ref{fig:logistic_n100}. 

    In summary, the incorporation of either the momentum tracking technique or LCA method proves effective in enhancing the practical performance of distributed stochastic gradient methods. Notably, the DSMT method outperforms the aforementioned alternatives by combining these two techniques. Such a comparison corroborates the theoretical findings in Section \ref{sec:main}.

    \subsection{Nonconvex Logistic Regression}
    \label{subsec:sims_ncvx}

    In this part, we consider a binary classification problem \eqref{eq:ncvx_logistic} classifying airplanes and trucks on the CIFAR-10 dataset. The parameter $\omega$ is set as $\omega = 0.05$ and $[x]_q$ denotes the $q-$element of $x\in\R^p$. The other settings follow the same as those in Subsection \ref{subsec:sims_scvx}. We compared the aforementioned methods over ring graphs with $n=100$ (Figure \ref{fig:ncvx_logistic_n100}) and $n=50$ (Figure \ref{fig:ncvx_logistic_n50}), respectively.
    \begin{equation}
		\label{eq:ncvx_logistic}
		\begin{aligned}
			& \min_{x\in\R^{p}} f(x) = \frac{1}{n}\sum_{i=1}^n f_i(x),\;f_i(x) := \frac{1}{|\mathcal{S}_i|} \sum_{j\in\mathcal{S}_i} \log\left[1 + \exp(-x^{\T}u_jv_j)\right] + \frac{\omega}{2}\sum_{q=1}^p \frac{[x]_q^2}{1 + [x]_q^2}.
		\end{aligned}
	\end{equation}

    It can be seen from Figure \ref{fig:ncvx_logistic} that solely applying momentum technique to DSGT does not enhance its performance. In Figure \ref{fig:ncvx_logistic_n50}, DSMT\_noLCA demonstrates a slight improvement over DSGT-HB. However, DSMT consistently outperforms other methods, capitalizing on the combined efficacy of momentum tracking technique and LCA method. Notably, the difference between DSMT and EDAS becomes negligible as the number of iteration increases, suggesting that the application of LCA to EDAS could potentially enhance its convergence. Such a phenomenon inspires further research of interest.

    \begin{figure}[htbp]
        \centering
        \subfloat[Ring graph, $n=100$, $1-\lambda = 6.6\times10^{-4}$.]{\includegraphics[width=0.49\textwidth]{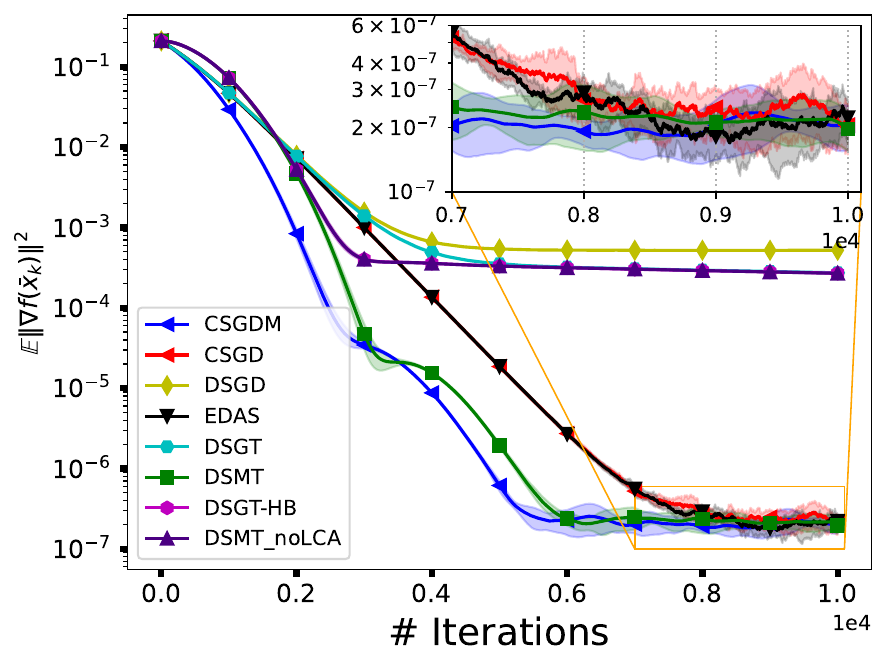}\label{fig:ncvx_logistic_n100}}
        \subfloat[Ring graph, $n=50$, $1-\lambda = 2.6\times 10^{-3}$.]{\includegraphics[width=0.49\textwidth]{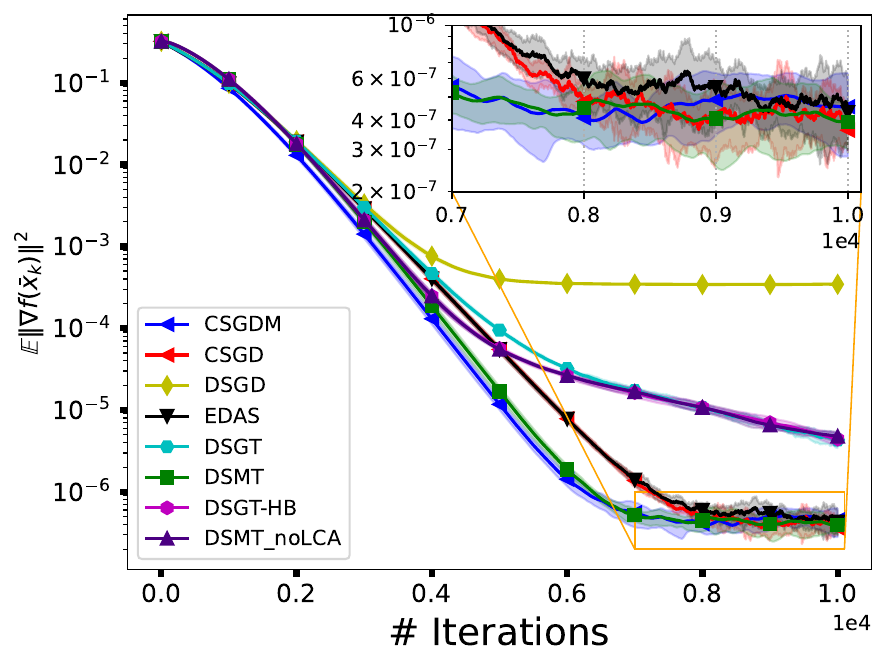}\label{fig:ncvx_logistic_n50}}
        \caption{Comparison among DSMT, DSGT, EDAS, DSGD, CSGD, and CSGDM for solving Problem \eqref{eq:ncvx_logistic} on the CIFAR-10 dataset using a constant stepsize. The stepsizes are set as $0.02$ for all methods. The momentum parameter is set as $\beta = 1 - (1-\trw)/n^{1/3}$ for DSMT, DSMT\_noLCA, and SGDM. The results are averaged over 10 repeated experiments. The shaded region represents the standard deviation.}
        \label{fig:ncvx_logistic}
    \end{figure}

    \section{Conclusion}
    \label{sec:con}

    This paper focuses on addressing the distributed stochastic optimization problem over networked agents. The proposed algorithm, Distributed Stochastic Momentum Tracking (DSMT), leverages momentum tracking technique as well as the Loopless Chebyshev Acceleration (LCA) method to enhance the performance of distributed stochastic gradient methods over networks. In particular, DSMT shortens the transient times for decentralized stochastic gradient methods without requiring multiple communication per iteration for smooth objective functions with or without the Polyak-{\L}ojasiewicz (PL) condition under the most general variance condition in distributed settings. Such a condition for stochastic gradients enables the wide range of application of DSMT. Experimental results also corroborate to such theoretical findings. The momentum tracking technique is also of independent interest and can potentially inspire future development of distributed optimization algorithms.

\section{Statements and Declarations}
The authors declare that there is no conflict of interest. The CIFAR-10 dataset is online at \href{https://www.cs.toronto.edu/\~kriz/cifar.html}{https://www.cs.toronto.edu/$\sim$kriz/cifar.html}.

\backmatter

\begin{appendices}

\section{Proofs for the General Nonconvex Case} 

    \subsection{Proof of Lemma \ref{lem:avg}}
    \label{app:avg}

    Define $\ub{x}_k^{\T} := \frac{1}{n}\1^{\T}[\tx_k]_{n+1:2n}$ and $\wb{x}_k^{\T} := \frac{1}{2n}\1^{\T}\tx_k$. From \eqref{eq:tx}, we have 
    \begin{equation}
        \label{eq:xkp1}
        \begin{aligned}
            \wb{x}_{k + 1} &= \frac{1}{2}\brk{(2 + \eta_w)\bar{x}_k- \eta_w\ub{x}_k} - \alpha \bar{y}_k,\\
            \x_{k + 1} &= (1 + \eta_w)W\prt{\x_k - \alpha \y_k} - \eta_w \prt{[\tx_k]_{n+1:2n} - \alpha \y_k},\\
            [\tx_{k + 1}]_{n+1:2n} &= \x_k - \alpha \y_k.
        \end{aligned}
    \end{equation}

    Therefore, it suffices to show $\bar{x}_k = \ub{x}_k = \wb{x}_k$ for any $k\geq 0$, which we prove by induction. When $k=0$, $\bar{x}_0 = \ub{x}_0 = \wb{x}_0$ since $\tx_0 = \xsh{0}$. Suppose $\bar{x}_k = \ub{x}_k = \wb{x}_k$ for some $k\ge 0$, then \eqref{eq:xkp1} leads to 
    \begin{align*}
        \wb{x}_{k + 1} &= \bar{x}_k - \alpha \bar{y}_k,\\
        \bar{x}_{k + 1} &= (1 + \eta_w)\bar{x}_k - \eta_w\ub{x}_k -\alpha \bar{y}_k = \bar{x}_k - \alpha\bar{y}_k,\\
        \ub{x}_{k + 1} &= \bar{x}_k - \alpha \bar{y}_k.
    \end{align*}

    Hence, we have $\bar{x}_k = \ub{x}_k = \wb{x}_k$ for any $k\geq 0$. Similar {line of analysis} can show that $\wb{y}_{k + 1} = \bar{y}_{k+1} = \ub{y}_{k + 1} = \bar{y}_k + \bar{z}_{k + 1} - \bar{z}_k$. The update form for $\bar{y}_k$ and the initialization $\y_0 = \z_0$ imply $
    \bar{y}_k = \bar{z}_k$ for any $k\geq 0$.

    \subsection{Proof of Lemma \ref{lem:dk}}
    \label{app:dk}
    Recall from Lemma \ref{lem:avg} that $\bar{x}_{k + 1} = \bar{x}_k - \alpha\bar{z}_k$.
    We first show $\bar{d}_{k + 1} = \bar{d}_k - \alpha \bar{g}_k$. 
    When $k = 0$, we have 
    \begin{align*}
        \bar{d}_1 - \bar{d}_0 &= \frac{1}{1-\beta}\bar{x}_1 - \frac{\beta}{1-\beta}\bar{x}_0 - \bar{x}_0= \frac{1}{1-\beta}\prt{\bar{x}_0 - \alpha \bar{z}_0} - \frac{1}{1-\beta}\bar{x}_0= -\alpha\bar{g}_0,
    \end{align*}
    where the last equality holds by initializing $\z_0 = (1-\beta)\g_0$.

    For the case $k\geq 1$, we have 
    \begin{align*}
        \bar{d}_{k + 1} - \bar{d}_k = \frac{1}{1-\beta}\prt{\bar{x}_{k + 1} -\beta\bar{x}_k - \bar{x}_k + \beta\bar{x}_{k-1}}= \frac{1}{1-\beta}\prt{-\alpha \bar{z}_k + \beta\alpha\bar{z}_{k-1}}= -\alpha\bar{g}_k.
    \end{align*}

    Next we show $\bar{d}_{k} - \bar{x}_k = -\frac{\alpha\beta}{1-\beta}\bar{z}_{k-1}$, which is because for $k\geq 1$,
    \begin{align}
        \label{eq:bdk_bxk}
        \bar{d}_k - \bar{x}_k = \frac{1}{1-\beta}\bar{x}_k - \frac{\beta}{1-\beta}\bar{x}_{k-1} - \bar{x}_k
        = \frac{\beta}{1-\beta}\prt{\bar{x}_k - \bar{x}_{k-1}}= -\frac{\alpha \beta}{1-\beta}\bar{z}_{k-1}.
    \end{align}
    Due to $\z_{-1} = \0$ and $\bar{d}_0 = \bar{x}_0$, the relation \eqref{eq:bdk_bxk} also holds for $k=0$.

    \subsection{Proof of Lemma \ref{lem:fx_fd}}
    \label{app:fx_fd}

    Given that $\bar{x}_k = \bar{d}_k + \alpha\beta\bar{z}_{k-1}/(1-\beta)$, $k\geq 0$ from Lemma \ref{lem:dk}, we have 
    \begin{align}
        &f(\bar{x}_k) - f^* \leq f(\bar{d}_k) - f^* + \frac{\alpha\beta}{1-\beta}\inpro{\nabla f(\bar{d}_k) - \nabla f(\bar{x}_k) + \nabla f(\bar{x}_k), \bar{z}_{k-1}}\nonumber\\
        &\quad + \frac{\alpha^2\beta^2 L}{2(1-\beta)^2}\norm{\bar{z}_{k-1}}^2\label{eq:fx_fd}\\
        &\leq f(\bar{d}_k) - f^* + \frac{\alpha\beta}{2(1-\beta)}\norm{\nabla f(\bar{x}_k)}^2 + \frac{\alpha\beta[1 + 3\alpha\beta L/(1-\beta)] }{2(1-\beta)}\norm{\bar{z}_{k-1}}^2,
    \end{align}
    where we invoke $\normi{\nabla f(\bar{x}_k) - \nabla f(\bar{d}_k)}\leq \alpha\beta L\normi{\bar{z}_{k - 1}}/(1-\beta)$ for $k\geq 0$.
    Taking the full expectation on both sides yields the desired result.

    \subsection{Proof of Lemma \ref{lem:descent_dk}}
    \label{app:descent_dk}

    In the light of Lemma~\ref{lem:dk} that $\bar{d}_{k + 1} = \bar{d}_k - \alpha\bar{g}_k$ for $k\geq 0$, we have from the descent lemma and Assumption \ref{as:abc} that 
    \begin{equation}
        \label{eq:de}
        \begin{aligned}
            &\condE{f(\bar{d}_{k + 1})}{\cF_k} \leq f(\bar{d}_k) - \alpha\inpro{\nabla f(\bar{d}_k) - \nabla f(\bar{x}_k), \frac{1}{n}\sumn\nabla f_i(x_{i,k})} \\
            &\quad - \alpha\inpro{\nabla f(\bar{x}_k),\frac{1}{n}\sumn\nabla f_i(x_{i,k}) } + \frac{L}{2}\condE{\norm{\alpha \bar{g}_k}^2}{\cF_k}\\
            &\leq f(\bar{d}_k) + \alpha \norm{\nabla f(\bar{d}_k) - \nabla f(\bar{x}_k)}^2 + \frac{\alpha}{4}\norm{\frac{1}{n}\sumn\nabla f_i(x_{i,k})}^2 - \frac{\alpha}{2}\norm{\nabla f(\bar{x}_k)}^2 \\
            &\quad - \frac{\alpha(1-\alpha L)}{2}\norm{\frac{1}{n}\sumn\nabla f_i(x_{i,k})}^2 + \frac{\alpha}{2}\norm{\nabla f(\bar{x}_k) - \frac{1}{n}\sumn\nabla f_i(x_{i,k})}^2\\
            &\quad + \frac{\alpha^2 L}{2} \condE{\norm{\bar{g}_k - \frac{1}{n}\sumn\nabla f_i(x_{i,k})}^2}{\cF_k}\\
            &\leq f(\bar{d}_k) + \alpha L^2\norm{\bar{d}_k - \bar{x}_k}^2 - \frac{\alpha }{2}\norm{\nabla f(\bar{x}_k)}^2 - \frac{\alpha}{8}\norm{\frac{1}{n}\sumn\nabla f_i(x_{i,k})}^2 \\
            &\quad + \frac{\alpha L^2}{2n}\sumn\norm{\bar{x}_k - x_{i,k}}^2+ \frac{\alpha^2 L}{n}\brk{\frac{C}{n}\sumn \prt{f_i(x_{i,k}) - f_i^*} + \sigma^2},
        \end{aligned}
    \end{equation}
    where we let $\alpha\leq 1/(4L)$ and invoke Assumption \ref{as:abc} for the last inequality. Noting that 
    \begin{align}
        f_i(x_{i,k}) - f_i^* &\leq f_i(\bar{x}_k) - f_i^* + \inpro{\nabla f_i(\bar{x}_k), x_{i,k} - \bar{x}_k} + \frac{L}{2}\norm{x_{i,k} - \bar{x}_k}^2\nonumber\\
        &\leq f_i(\bar{x}_k) - f_i^* + \frac{1}{2L}\norm{\nabla f_i(\bar{x}_k)}^2 + L\norm{x_{i,k} - \bar{x}_k}^2\nonumber\\
        &\leq 2\prt{f_i(\bar{x}_k) - f_i^*} + L\norm{x_{i,k} - \bar{x}_k}^2,\label{eq:fi_f}
    \end{align}
    we have 
    \begin{equation}
        \label{eq:fxik}
        \begin{aligned}
            \frac{1}{n}\sumn \prt{f_i(x_{i,k}) - f_i^*} &\leq
            2\prt{f(\bar{x}_k) - f^*} + 2\sigfn + \frac{L}{n}\norm{\x_k - \1\bar{x}_k^{\T}}^2.
        \end{aligned}
    \end{equation}
    By the relation $\bar{d}_k - \bar{x}_k = -\alpha\beta\bar{z}_{k-1}/(1-\beta)$ and $\z_{-1} = \0$, we have 
    \begin{equation}
        \label{eq:db_xb}
        \norm{\bar{d}_k - \bar{x}_k}^2 = \frac{\alpha^2\beta^2}{(1-\beta)^2}\norm{\bar{z}_{k-1}}^2, k\geq 0.
    \end{equation}
Combining \eqref{eq:de}-\eqref{eq:db_xb}, taking the full expectation, letting $\alpha\leq 1/(2C)$, and invoking Lemma \ref{lem:fx_fd} yield the desired result for $k\geq 0$. 


        
    \subsection{Proof of Lemma \ref{lem:zk}}
    \label{app:zk}
    According to Lemma \ref{lem:avg}, we have 
    \begin{equation}
        \label{eq:zk_avg}
        \begin{aligned}
            \bar{z}_k &= \beta\bar{z}_{k-1} + (1-\beta)\prt{\bar{g}_k - \frac{1}{n}\sumn\nabla f_i(x_{i,k})} + (1-\beta)\brk{\frac{1}{n}\sumn\nabla f_i(x_{i,k}) - \nabla f(\bar{x}_k)}\\
            &\quad +(1-\beta)\nabla f(\bar{x}) .
        \end{aligned}
    \end{equation}
Then, by Assumption \ref{as:abc} and Jensen's inequality, we have
    \begin{equation}
        \label{eq:zk_s1}
        \begin{aligned}
            &\condE{\norm{\bar{z}_k}^2}{\cF_k} \leq \beta\norm{\bar{z}_{k-1}}^2 + \frac{(1-\beta)^2}{n}\brk{\frac{C}{n}\sumn\prt{f_i(x_{i,k}) - f_i^*} + \sigma^2} \\
            &\quad+ 2(1-\beta)\norm{\nabla f(\bar{x}_k)}^2 + \frac{2(1-\beta)L^2}{n}\norm{\x_k - \1\bar{x}_k^{\T}}^2\\
            &\leq \beta\norm{\bar{z}_{k-1}}^2 + \frac{2C(1-\beta)^2 }{n}\brk{f(\bar{x}_k)-f^*} + 2(1-\beta)\norm{\nabla f(\bar{x}_k)}^2\\
            &\quad + \frac{(1-\beta)^2(2C\sigfn + \sigma^2)}{n}+ \frac{2(1-\beta)L}{n}\prt{L + \frac{C(1-\beta)}{n}}\norm{\x_k-\1\bar{x}_k^{\T}}^2.
        \end{aligned}
    \end{equation}
    Taking the full expectation on both sides and invoking Lemma \ref{lem:fx_fd} yield the desired result.

    \subsection{Proof of Lemma \ref{lem:zk_nf}}
    \label{app:zk_cons}
    First consider 
    \begin{align}
        &\z_k - \nabla F(\Barx{k}) = \beta\z_{k-1} + (1-\beta)\g_k - \nabla F(\Barx{k})\nonumber\\
        &= \beta\brk{\z_{k - 1} - \nabla F(\Barx{k-1})} + (1-\beta)\brk{\g_k - \nabla F(\x_k)} + (1-\beta)\brk{\nabla F(\x_k) - \nabla F(\Barx{k})}\nonumber\\
        &\quad - \beta\brk{\nabla F(\Barx{k}) - \nabla F(\Barx{k - 1})},\;k\geq 1.  \label{eq:zknf}
    \end{align}
    From \eqref{eq:zknf} and Jensen's inequality, we have 
    \begin{equation}
        \label{eq:zknf_norm1}
        \begin{aligned}
            &\condE{\norm{\z_k - \nabla F(\Barx{k})}^2}{\cF_k} \leq \beta\norm{\z_{k - 1} - \nabla F(\Barx{k-1})}^2\\
            &\quad + 2(1-\beta)\norm{\nabla F(\x_k) - \nabla F(\Barx{k})}^2+ \frac{2\beta^2}{1-\beta}\norm{\nabla F(\Barx{k}) - \nabla F(\Barx{k - 1})}^2 \\
            &\quad +(1-\beta)^2\condE{\norm{\g_k - \nabla F(\x_k)}^2}{\cF_k}\\
            &\leq \beta\norm{\z_{k - 1} - \nabla F(\Barx{k-1})}^2 + \frac{2\alpha^2\beta^2 n L^2}{1-\beta}\norm{\bar{z}_{k-1}}^2 + 2(1-\beta)L^2\norm{\Pi\x_k}^2\\
            &\quad + n(1-\beta)^2(2C\sigfn + \sigma^2)+ n(1-\beta)^2C\crk{2\brk{f(\bar{x}_k) - f^*} + \frac{L}{n}\norm{\Pi\x_k}^2},
        \end{aligned}
    \end{equation}
    where Assumption \ref{as:abc} and the upper bound \eqref{eq:nF_diff} are invoked for the last inequality. The inequality \eqref{eq:nF_diff} holds due to Lemma \ref{lem:avg} that $\bar{x}_{k } = \bar{x}_{k-1} - \alpha\bar{z}_{k-1}$
    \begin{equation}
        \label{eq:nF_diff}
        \begin{aligned}
            &\E\brk{\norm{\nabla F(\Barx{k}) - \nabla F(\Barx{k-1})}^2} \leq nL^2\E\brk{\norm{\bar{x}_{k} - \bar{x}_{k-1}}^2}\leq \alpha^2n L^2\E\brk{\norm{\bar{z}_{k-1}}^2}.
        \end{aligned}
    \end{equation}
    Taking full expectation on both sides of \eqref{eq:zknf_norm1}, and substituting the result of Lemmas \ref{lem:fx_fd} yield that
    \begin{align*}
       &\E\brk{\norm{\z_k - \nabla F(\Barx{k})}^2}\leq \beta\E\brk{\norm{\z_{k - 1} - \nabla F(\Barx{k-1})}^2} + \frac{2\alpha^2\beta^2 n L^2}{1-\beta}\E\brk{\norm{\bar{z}_{k-1}}^2}\\
       &\quad + 2(1-\beta)L^2\E\brk{\norm{\Pi\x_k}^2}+ n(1-\beta)^2(2C\sigfn + \sigma^2)+ n(1-\beta)^2C\crk{2\E\brk{f(\bar{d}_k) - f^*}\right.\\
       &\left.\quad + \frac{2\alpha\beta}{1-\beta}\E\brk{\norm{\nabla f(\bar{x}_k)}^2} + \frac{2\alpha\beta}{1-\beta}\E\brk{\norm{\bar{z}_{k-1}}^2} + \frac{L}{n}\E\brk{\norm{\Pi\x_k}^2}}\\
       &\leq \beta\E\brk{\norm{\z_{k - 1} - \nabla F(\Barx{k-1})}^2} + 2\alpha n\prt{\frac{\alpha L^2}{1-\beta} + (1-\beta)C}\E\brk{\norm{\bar{z}_{k - 1}}^2}\\
       &\quad + 2(1-\beta)L(L+C)\E\brk{\norm{\Pi\x_k}^2}+ n(1-\beta)^2(2C\sigfn + \sigma^2)\\
       &\quad + 2n(1-\beta)^2C\E\brk{f(\bar{d}_k) - f^*} + 2\alpha n(1-\beta)C\E\brk{\norm{\nabla f(\bar{x}_k)}^2}, \;k\geq 1.
    \end{align*}
    Letting $\alpha \leq (1-\beta)/L$ leads to the desired result for $k\geq 1$. For the case $k=0$, recall that $\x_{-1} = \x_0$, $\z_{-1} = \0$ and $\z_{0}= (1-\beta)\g_0$, then 
    \begin{equation}
        \label{eq:zknf_k9}
        \begin{aligned}
            &\z_0 - \nabla F(\Barx{0}) = \beta\brk{\z_{ - 1} - \nabla F(\Barx{-1})} + (1-\beta)\brk{\g_0 - \nabla F(\x_0)}\\
            &\quad + (1-\beta)\brk{\nabla F(\x_0) - \nabla F(\Barx{0})}- \beta\brk{\nabla F(\Barx{0}) - \nabla F(\Barx{- 1})}.
        \end{aligned}
    \end{equation}
    Since $\brki{\nabla F(\Barx{0}) - \nabla F(\Barx{- 1})} = \0$ due to $\x_{-1} = \x_0$, similar to the derivations of \eqref{eq:zknf_norm1}, we have
    \begin{equation}
        \label{eq:zknf_k0}
        \begin{aligned}
            &\E\brk{\norm{\z_0 - \nabla F(\Barx{0})}^2} \leq \beta\E\brk{\norm{\z_{-1} - \nabla F(\Barx{-1})}^2} + (1-\beta)L^2\norm{\Pi\x_0}^2\\
            &\quad + n(1-\beta)^2(2C\sigfn + \sigma^2)+ n(1-\beta)^2C\crk{2\E\brk{f(\bar{x}_0) - f^*} + \frac{L}{n}\norm{\Pi\x_0}^2}\\
            &\leq \beta\E\brk{\norm{\z_{-1} - \nabla F(\Barx{-1})}^2} + (1-\beta)L(L+C)\norm{\Pi\x_{0}}^2\\
            &\quad + n(1-\beta)^2(2C\sigfn + \sigma^2)+ 2n(1-\beta)^2C\E\brk{f(\bar{d}_{0}) - f^*}, 
        \end{aligned}
    \end{equation}
    where we invoke that $\bar{x}_{-1} = \bar{x}_0 = \bar{d}_0$. As a consequence, combining the recursions for $k=0$ and $k\geq 1$ yields the desired result.

    \subsection{Proof of Lemma \ref{lem:cons_R}}
    \label{app:cons_R}

    Note that $\tpi\tW = \tpi\tW\tpi$. Then, 
    \begin{equation}
        \label{eq:tpi_x}
        \begin{aligned}
            \condE{\norm{\tpx_{k + 1}}^2}{\cF_k}&= \condE{\norm{\tpi\tW\tpx_k - \alpha \tpi\tW\tpi\ysh{k}}^2}{\cF_k}\\
            &\leq \frac{1}{q}\norm{\tpi\tW\tpx_k}^2 + \frac{\alpha^2c_0\trw^2}{1-q}\condE{\norm{\Pi\y_k}^2}{\cF_k},
        \end{aligned}
    \end{equation}
    where we invoke Jensen's inequality for some $q>0$ (to be determined later) and Lemma \ref{lem:lca} for the last inequality.

    For the term $\normi{\tpi\tW\tpx_k}^2$, we have 
    \begin{equation}
        \label{eq:tpiw_x}
        \begin{aligned}
            \frac{1}{q}\condE{\norm{\tpi\tW\tpx_k}^2}{\cF_{k-1}} &\leq \frac{1}{q^2}\norm{\tpi\tW^2\tpx_{k-1}}^2 + \frac{\alpha^2c_0\trw^4}{q(1-q)}\condE{\norm{\Pi\y_{k-1}}^2}{\cF_{k-1}}.
        \end{aligned}
    \end{equation}
Choosing $q = \trw$ in \eqref{eq:tpi_x} and \eqref{eq:tpiw_x} and repeating such procedures lead to
    \begin{equation}
        \label{eq:tpi_x_sum}
        \begin{aligned}
            \E\brk{\norm{\tpx_k}^2} &\leq \frac{1}{\trw^k}\norm{\tpi\tW^k\tpx_0}^2 + \frac{\alpha^2 c_0\trw}{1-\trw}\sum_{t=0}^{k-1}\trw^{k-t}\E\brk{\norm{\Pi\y_t}^2}\\
            &\leq c_0\trw^k\norm{\Pi\x_0}^2 + \frac{\alpha^2 c_0\trw}{1-\trw}\sum_{t=0}^{k-1}\trw^{k-t}\E\brk{\norm{\Pi\y_t}^2}:= \cR_k^x,
        \end{aligned}
    \end{equation}
    where we apply Lemma \ref{lem:lca} for the last inequality. 
    For the term $\cR_k^x$, we have 
    \begin{align}
        \cR_{k + 1}^x &= \trw\cR_k^x + \frac{\alpha^2 c_0\trw^2}{1-\trw}\E\brk{\norm{\Pi\y_k}^2}\leq \trw\cR_k^x + \frac{\alpha^2 c_0\trw}{1-\trw}\E\brk{\norm{\tpy_k}^2}\label{eq:Rx}.
    \end{align}
Similarly, we construct a sequence $\crki{\cR_k^y}$ that bounds the term $\E\brki{\normi{\tpi\ty}^2}$ { in the remaining proofs. We begin by handling the term $\z_{k + 1} - \z_k$ below.} 
Relation \eqref{eq:zknf} guides us to rewrite $(\z_{k + 1} - \z_k)$ as in \eqref{eq:zk_diff}, so that we can take advantage of the additional coefficient $(1-\beta)$ and reduce the impact of the data heterogeneity due to $\sumn\normi{\nabla f_i(x_{i,k})}^2$ as mentioned in Subsection~\ref{subsec:motivation}. 
    \begin{equation}
        \label{eq:zk_diff}
        \begin{aligned}
            &\z_{k + 1} - \z_k =-(1 - \beta)\prt{\z_k - \nabla F(\Barx{k})} + (1 - \beta) \brk{\g_{k + 1} - \nabla F(\x_{k + 1})}\\
            &\quad + (1-\beta)\brk{\nabla F(\x_{k + 1}) - \nabla F(\Barx{k+1})}+(1-\beta)\brk{\nabla F(\Barx{k+1}) - \nabla F(\Barx{k})}.
        \end{aligned}
    \end{equation}
    The above relation follows from \eqref{eq:z}.
    Therefore, we can rewrite the update of $\ty$ as follows.
    \begin{equation}
        \label{eq:ty_de}
        \begin{aligned}
            &\ty_{k + 1} = \tW\ty_k - (1-\beta)\tW\brk{\z_k - \nabla F(\Barx{k})}_{\#} + (1-\beta)\tW\brk{\g_{k + 1} - \nabla F(\x_{k + 1})}_{\#}\\
            &\quad + (1-\beta)\tW\brk{\nabla F(\x_{k + 1}) - \nabla F(\Barx{k+1})}_{\#} + (1-\beta)\tW\brk{\nabla F(\Barx{k + 1}) - \nabla F(\Barx{k})}_{\#}.
        \end{aligned}
    \end{equation}

    Denote $\cA:= \ty_k - (1-\beta)\brki{\z_k - \nabla F(\Barx{k})}_{\#} + (1-\beta)\brki{\nabla F(\x_{k + 1}) - \nabla F(\Barx{k + 1})}_{\#} + (1-\beta)\brki{\nabla F(\Barx{k + 1}) - \nabla F(\Barx{k})}_{\#}$. We then have
    \begin{equation}
        \label{eq:ty_s1}
        \begin{aligned}
            & \condE{\norm{\tpy_{k + 1}}^2}{\cF_{k + 1}} = \norm{\tpi\tW\tpi\cA}^2\\
            &\quad + (1-\beta)^2\condE{\norm{\tpi\tW\tpi\brk{\g_{k + 1} - \nabla F(\x_{k + 1})}_{\#}}^2}{\cF_{k + 1}}\\
            &\quad + 2\condE{\inpro{\tpi\tW\tpi\cA, (1-\beta)\tpi\tW\tpi\brk{\g_{k + 1} - \nabla F(\x_{k + 1})}_{\#}}}{\cF_{k + 1}}\\
            &\leq \frac{1}{\trw}\norm{\tpi\tW\tpy_k}^2 + nc_0\trw^2(1-\beta)^2\prt{2C\sigfn + \sigma^2}\\
            &\quad + \frac{3(1-\beta)^2c_0\trw^2}{1-\trw}\brk{\norm{\z_k - \nabla F(\Barx{k})}^2 + L^2\norm{\Pi\x_{k + 1}}^2 + \alpha^2nL^2 \norm{\bar{z}_k}^2}\\
            &\quad + nc_0\trw^2C(1-\beta)^2\crk{2\brk{f(\bar{x}_{k + 1}) - f^*} + \frac{L}{n}\norm{\Pi\x_{k + 1}}^2},
        \end{aligned}
    \end{equation}
    where we invoke Lemma \ref{lem:lca}, Assumptions \ref{as:smooth} and \ref{as:abc}, and \eqref{eq:fxik} for the last inequality. Similarly, we can obtain the recursion for $\condEi{\normi{\tpi\tW\tpy_k}^2}{\cF_k}$: 
    \begin{equation}
        \label{eq:twy_s1}
        \begin{aligned}
            &\frac{1}{\trw}\condE{\norm{\tpi\tW\tpy_k}^2}{\cF_k} \leq \frac{1}{\trw^2}\norm{\tpi\tW^2\tpy_{k-1}}^2 + nc_0\trw^3(1-\beta)^2(2C\sigfn + \sigma^2) \\
            &\quad + \frac{3(1-\beta)^2c_0\trw^3}{1-\trw}\brk{\norm{\z_{k-1} - \nabla F(\Barx{k-1})}^2 + L^2\norm{\Pi\x_k}^2 + \alpha^2nL^2\norm{\bar{z}_{k-1}}^2}\\
            &\quad + nc_0\trw^3(1-\beta)^2C\crk{2\brk{f(\bar{x}_k) - f^*} + \frac{L}{n}\norm{\Pi\x_k}^2}.
        \end{aligned}
    \end{equation}
    Taking the full expectation in \eqref{eq:ty_s1} and \eqref{eq:twy_s1} and {using the similar steps as in the preceding analysis} lead to \pagebreak
    \begin{equation}
        \label{eq:piy}
        \begin{aligned}
            &\E\brk{\norm{\tpi\ty_k}^2} \leq \frac{1}{\trw^{k}}\E\brk{\norm{\tpi\tW^{k}\tpy_0}^2} + nc_0\trw^2(1-\beta)^2(2C\sigfn + \sigma^2)\sum_{t=0}^{k-1}\trw^{k-1-t}\\
            &\quad + \frac{3(1-\beta)^2c_0\trw^2}{1-\trw}\sum_{t=0}^{k-1}\trw^{k-1-t}\crk{\E\brk{\norm{\z_{t} - \nabla F(\Barx{t})}^2} + L^2\E\brk{\norm{\Pi\x_{t+1}}^2}\right.\\
            &\left.\quad + \alpha^2 L^2n\E\brk{\norm{\bar{z}_{t}}^2}}+ nc_0\trw^2(1-\beta)^2C\sum_{t=1}^{k}\trw^{k-t}\crk{2\E\brk{f(\bar{x}_t) - f^*} + \frac{L}{n}\E\brk{\norm{\Pi\x_t}^2}}\\
            &\leq c_0\trw^k \E\brk{\norm{\Pi\y_0}^2}+ \frac{3(1-\beta)^2c_0\trw^2}{1-\trw}\sum_{t=0}^{k-1}\trw^{k-1-t}\crk{\E\brk{\norm{\z_{t} - \nabla F(\Barx{t})}^2}\right.\\
            &\left.\quad + L^2\E\brk{\norm{\Pi\x_{t+1}}^2} + \alpha^2 L^2n\E\brk{\norm{\bar{z}_{t}}^2}} + nc_0\trw^2(1-\beta)^2(2C\sigfn + \sigma^2)\sum_{t=0}^{k-1}\trw^{k-1-t}\\
            &\quad + nc_0\trw^2(1-\beta)^2C\sum_{t=1}^{k}\trw^{k-t}\crk{2\E\brk{f(\bar{x}_t) - f^*} + \frac{L}{n}\E\brk{\norm{\Pi\x_t}^2}}\\
            &\quad :=\cR_k^y,\; k\geq 1,
        \end{aligned}
    \end{equation}
    where we invoke Lemma \ref{lem:lca} for the last inequality.
    Then we can obtain the recursion for $\cR_k^y$ according to \eqref{eq:piy}:
    \begin{equation}
        \label{eq:cRy}
        \begin{aligned}
            \cR_{k + 1}^y &= \trw\cR_k^y + \frac{3(1-\beta)^2c_0\trw^2}{1-\trw}\crk{\E\brk{\norm{\z_{k}-\nabla F(\Barx{k})}^2} + L^2\norm{\Pi\x_{k+1}}^2\right.\\
            &\left.\quad + \alpha^2L^2n\E\brk{\norm{\bar{z}_k}^2}}+ nc_0\trw^2(1-\beta)^2(2C\sigfn + \sigma^2)\\
            &\quad + nc_0\trw^2(1-\beta)^2C\crk{2\E\brk{f(\bar{x}_{k+1}) - f^*} + \frac{L}{n}\E\brk{\norm{\Pi\x_{k+1}}^2}}, \;k \geq 1.
        \end{aligned}
    \end{equation}

    Note that \eqref{eq:piy} defines $\cR_1^y$ as follows 
    \begin{align*}
        \cR_1^y &=c_0\trw \E\brk{\norm{\Pi\y_0}^2}+ \frac{3(1-\beta)^2c_0\trw^2}{1-\trw}\crk{\E\brk{\norm{\z_{0} - \nabla F(\Barx{0})}^2} + L^2\E\brk{\norm{\Pi\x_{1}}^2}\right.\\
        &\left.\quad + \alpha^2 L^2n\E\brk{\norm{\bar{z}_{0}}^2}}+ nc_0\trw^2(1-\beta)^2(2C\sigfn + \sigma^2)\\
        &\quad + nc_0\trw^2(1-\beta)^2C\crk{2\E\brk{f(\bar{x}_1) - f^*} + \frac{L}{n}\E\brk{\norm{\Pi\x_1}^2}}.
    \end{align*}
Hence, we can define $\cR_0^y:= c_0\E\brki{\normi{\Pi\y_0}^2} + nc_0\trw(1-\beta)^2(2C\sigfn + \sigma^2)$ so that the recursion \eqref{eq:cRy} also holds for $k=0$.

    Noting that $\E\brki{\normi{\Pi\x_{k+1}}^2}\leq \E\brki{\normi{\tpx_{k + 1}}^2}\leq \cR_{k+1}^x$ and $\E\brki{\normi{\tpy_k}^2}\leq \cR_k^y$, the desired result follows by invoking \eqref{eq:Rx} and Lemmas \ref{lem:fx_fd}-\ref{lem:zk_nf}:
    \begin{align*}
        &\cR_{k + 1}^y 
        \leq \brk{\trw + \frac{3\alpha^2c_0^2(1-\beta)^2 L(L+C)}{(1-\trw)^2}}\cR_k^y \\
        &\quad + \frac{3c_0(1-\beta)^2L}{1-\trw}\brk{L+C + 2(L+C) + 2\alpha(L+C)^2 + \alpha CL}\cR_k^x\\
        &\quad + \frac{3c_0(1-\beta)^2\beta}{1-\trw}\E\brk{\norm{\z_{k - 1} - \nabla F(\Barx{k-1})}^2} \\
        &\quad + 3\alpha n (1-\beta)c_0(L+C)\brk{\frac{2(1-\beta)}{1-\trw} + 1 + \frac{4\alpha^2 CL}{3(1-\beta)}}\E\brk{\norm{\bar{z}_{k-1}}^2}\\
        &\quad + 2nc_0(1-\beta)^2C\brk{\frac{3(1-\beta)^2}{1-\trw} + \frac{3\alpha(1-\beta)(C+L)}{n} + 2}\E\brk{f(\bar{d}_k) - f^*}\\
        &\quad + 4\alpha n c_0(1-\beta)\brk{C + \frac{6(1-\beta)^2C}{4(1-\trw)} + \frac{9(C+L)(1-\beta)}{4}}\E\brk{\norm{\nabla f(\bar{x}_k)}^2}\\
        &\quad + nc_0(1-\beta)^2\brk{1 + \frac{3(1-\beta)^2}{1-\trw} + \frac{3\alpha(1-\beta)(L+C)}{n} + \frac{2\alpha^2 CL}{n}}(2C\sigfn + \sigma^2)\\
        &\leq \frac{1+\trw}{2}\cR_k^y + \frac{15c_0(1-\beta)^2L(L+C)}{1-\trw}\cR_k^x + \frac{3c_0(1-\beta)^2\beta}{1-\trw}\E\brk{\norm{\z_{k - 1} - \nabla F(\Barx{k-1})}^2}\\
        &\quad + 9\alpha n c_0(1-\beta)(C+L)\E\brk{\norm{\bar{z}_{k - 1}}^2} + 12nc_0(1-\beta)^2C\E\brk{f(\bar{d}_k) - f^*}\\
        &\quad + 20\alpha n c_0(1-\beta)(C+L)\E\brk{\norm{\nabla f(\bar{x}_k)}^2}+ 6nc_0(1-\beta)^2(2C\sigfn + \sigma^2),
    \end{align*}
    where we let $1-\beta\leq 1-\trw$ and the stepsize satisfy
    \begin{align*}
        \alpha\leq\min\crk{\frac{1}{2(L+C)}, \sqrt{\frac{1-\trw}{6c_0^2L(L+C)}}, \sqrt{\frac{1-\beta}{4 CL}}}.
    \end{align*}

    \subsection{Proof of Lemma \ref{lem:lya}}
    \label{app:lya}

    We define the Lyapunov function $\cL_k$ as 
    \begin{equation}
        \label{eq:Lyapunov_can}
        \begin{aligned}
            \cL_k&:=\E\brk{f(\bar{d}_k) - f^*} + \alpha \cC_1 \cR_k^x + \alpha^3\cC_2 \cR_k^y + \alpha^3\cC_3\E\brk{\norm{\bar{z}_{k-1}}^2} \\
            &\quad + \alpha^3\cC_4\E\brk{\norm{\z_{k-1} - \nabla F(\Barx{k-1})}^2}, 
        \end{aligned}
    \end{equation}
    where $\cC_1$-$\cC_4$ are positive constants to be determined later. 
Combining the results of Lemmas \ref{lem:descent_dk}-\ref{lem:cons_R} and \eqref{eq:Lyapunov_can} leads to 
    \begin{align}
        \cL_{k+1} &\leq \brk{1 + \frac{2\alpha^2 CL}{n}+  12\alpha^3 \cC_2 nC(1-\beta)^2c_0 + \frac{2\alpha^3 C(1-\beta)^2 \cC_3}{n}\right.\nonumber\\
        &\left.\quad + 2n\alpha^3\cC_4 (1-\beta)^2C} \E\brk{f(\bar{d}_k) - f^*}\nonumber\\
        &\quad + \brk{\frac{\alpha L^2}{n}+ \alpha\cC_1\trw + \frac{15\alpha^3\cC_2 (1-\beta)^2 c_0L(L+C)}{1-\trw} + \frac{2\alpha^3 \cC_3(1-\beta)L(L+C)}{n}\right.\nonumber\\
        &\left.\quad + 2\alpha^3\cC_4(1-\beta)(L+C)L}\cR_k^x+ \brk{\frac{\alpha^3c_0\trw\cC_1}{1-\trw} + \frac{(1+\trw)\alpha^3\cC_2}{2}} \cR_k^y\nonumber\\
        &\quad + \brk{\frac{\alpha^3 \beta^2 L(L+2C)}{(1-\beta)^2} + 9\alpha^4\cC_2 n(C+L)(1-\beta)c_0 + \frac{(1+\beta)\alpha^3\cC_3}{2}\right.\nonumber\\
        &\left.\quad + 2\alpha^4\cC_4n(C+L)}\E\brk{\norm{\bar{z}_{k-1}}^2}\nonumber\\
        &\quad + \brk{\frac{3(1-\beta)^2\alpha^3\cC_2 c_0\beta}{1-\trw} +\alpha^3\cC_4\beta}\E\brk{\norm{\z_{k-1} - \nabla F(\Barx{k-1})}^2}\nonumber\\
        &\quad -\frac{\alpha}{2}\brk{1 - \frac{4\alpha^2 CL}{n(1-\beta)} - 40\alpha^3 n c_0(1-\beta)(C+L)\cC_2- 6(1-\beta)\alpha^2\cC_3 \right.\nonumber\\
        &\left.\quad - 4n(1-\beta)\alpha^3\cC_4C}\E\brk{\norm{\nabla f(\bar{x}_k)}^2}\nonumber\\
        &\quad + \brk{\frac{L}{n} + 6\alpha\cC_2 nc_0(1-\beta)^2 + \frac{\alpha \cC_3(1-\beta)^2}{n} + n\alpha\cC_4(1-\beta)^2}\alpha^2(2C\sigfn + \sigma^2).\label{eq:Lya_s1}
    \end{align}

    Therefore, it suffices to determine $\cC_1$-$\cC_4$ and the parameters $\alpha$ and $\beta$ such that 
    \begin{subequations}
        \begin{align}
            &\frac{\alpha L^2}{n}+ \frac{15\alpha^3\cC_2 (1-\beta)^2 c_0L(L+C)}{1-\trw} + \frac{2\alpha^3 \cC_3(1-\beta)L(L+C)}{n}\nonumber\\
            &\quad + 2\alpha^3\cC_4(1-\beta)(L+C)L\leq (1-\trw)\alpha \cC_1,\label{eq:c1} \\
            &\frac{\alpha^3c_0\trw\cC_1}{1-\trw} + \frac{(1+\trw)\alpha^3\cC_2}{2} \leq \alpha^3\cC_2, \label{eq:c2}\\
            &\frac{\alpha^3 \beta^2 L(L+2C)}{(1-\beta)^2} + 9\alpha^4\cC_2 n(C+L)(1-\beta)c_0 + \frac{(1+\beta)\alpha^3\cC_3}{2}\nonumber\\
            &\quad + 2\alpha^4\cC_4n(C+L) \leq \alpha^3\cC_3, \label{eq:c3}\\
            &\frac{3(1-\beta)^2\alpha^3\cC_2 c_0\beta}{1-\trw} +\alpha^3\cC_4\beta \leq \alpha^3\cC_4. \label{eq:c4}
        \end{align}
    \end{subequations}
    
    From \eqref{eq:c4}, it is sufficient to choose $\cC_4= 4c_0\cC_2$ given that $\beta\geq \trw$. According to \eqref{eq:c2}, we can choose $\cC_2 = 4c_0\trw\cC_1 / (1-\trw)^2$.
    Substituting the above $\cC_2$ and $\cC_4$ into \eqref{eq:c1} and \eqref{eq:c3} leads to 
    \begin{subequations}
        \begin{align}
            &\frac{L^2}{n}+ \frac{2\alpha^2\cC_3(1-\beta)L(L+C)}{n}\nonumber\\
            &\leq \prt{1-\trw -\frac{60\alpha^2(1-\beta)^2c_0^2\trw L(L+C)}{(1-\trw)^3} - \frac{32\alpha^2c_0^2(1-\beta)(L+C)L\trw}{(1-\trw)^2}}\cC_1, \label{eq:c1_s1}\\
            &\frac{\beta^2 L(L+2C)}{(1-\beta)^2} + \frac{36\alpha n(1-\beta)c_0^2C\cC_1\trw}{(1-\trw)^2} + \frac{8\alpha c_0^2 n(C+L)\cC_1\trw}{(1-\trw)^2} \leq \frac{(1-\beta)\cC_3}{2}.\label{eq:c3_s1}
        \end{align}
    \end{subequations} 
    Note that $(1-\beta)\leq (1-\trw)$. Letting the stepsize $\alpha$ satisfy 
    \begin{align}
        \label{eq:alpha_c1c3}
        \alpha\leq \min\crk{\sqrt{\frac{(1-\trw)^2}{240c_0^2L(L+C)}}, \sqrt{\frac{(1-\trw)^2}{128c_0^2L(L+C)}}}
    \end{align}
    leads to
    \begin{equation}
        \label{eq:c1_c3}
        \cC_1\geq \frac{2L^2}{n(1-\trw)} + \frac{4\alpha^2\cC_3(1-\beta)L(L+C)}{n(1-\trw)}.
    \end{equation}
According to \eqref{eq:c3_s1}, \eqref{eq:alpha_c1c3}, and \eqref{eq:c1_c3}, we can choose $\cC_1$ and $\cC_3$ as follows:
    \begin{equation}
        \label{eq:c1c3}
        \cC_1 = \frac{3L^2}{n(1-\trw)},\; \cC_3 = \frac{4L(L+2C)}{(1-\beta)^3}.
    \end{equation}
    To make such a choice feasible, it is sufficient to let the stepsize $\alpha$ and the parameter $\beta$ satisfy 
    \begin{align*}
        \alpha\leq\min\crk{\frac{1-\trw}{48 L}, \frac{1}{216(C+L)}},\; \beta\geq \trw,
    \end{align*}
    so that the following inequality, which is derived from \eqref{eq:c3_s1}, holds
    \begin{align*}
        \frac{108\alpha (1-\beta)c_0 CL^2}{(1-\trw)^3} + \frac{24\alpha c_0(C+L)L^2}{(1-\trw)^3}\leq \frac{L(L+2C)}{(1-\beta)^2}.
    \end{align*}

    Therefore, we have determined the constants $\cC_1$-$\cC_4$:
    \begin{align}
        \label{eq:cCs}
        \cC_1= \frac{3L^2}{n(1-\trw)},\; \cC_2 = \frac{12c_0\trw L^2}{n(1-\trw)^3}, \; \cC_3 = \frac{4L(L+2C)}{(1-\beta)^3}, \; \cC_4= \frac{48 c_0^2\trw L^2}{n(1-\trw)^3}.
    \end{align}
Hence, the inequality \eqref{eq:Lya_s1} becomes 
    \begin{equation}
        \label{eq:Lyapunov_s1}
        \begin{aligned}
            &\cL_{k+1}\leq \brk{1 + \frac{2\alpha^2 CL}{n}+  \frac{240\alpha^3 c_0^2\trw CL^2(1-\beta)^2}{(1-\trw)^3} + \frac{8\alpha^3 CL(L+2C)}{n(1-\beta)}}\cL_k\\
            &-\frac{\alpha}{2}\brk{1 - \frac{4\alpha^2 CL}{n(1-\beta)}- \frac{672\alpha^3c_0^2\trw(1-\beta)L^2(C+L)}{(1-\trw)^3} - \frac{24\alpha^2L(L+2C)}{(1-\beta)^2}}\E\brki{\normi{\nabla f(\bar{x}_k)}^2}\\
            &+ \brk{\frac{L}{n} + \frac{72\alpha c_0^2\trw L^2(1-\beta)^2}{(1-\trw)^3} + \frac{4\alpha L(L+2C)}{n(1-\beta)}}\alpha^2(2C\sigfn + \sigma^2).
        \end{aligned}
    \end{equation}
Letting the stepsize satisfy 
    \begin{align*}
        \alpha\leq\min\crk{\frac{1-\beta}{4(L+2C)}, \prt{\frac{(1-\trw)^3}{4032 c_0^2(1-\beta)L^2(C+L)}}^{1/3}, \sqrt{\frac{(1-\beta)^2}{144L(L+2C)}}, \sqrt{\frac{1-\beta}{24 CL}}}
    \end{align*}
    yields the desired result for the recursion of $\cL_k$. 

    We next consider the term $\cL_0$, for which we have 
    \begin{equation}
        \label{eq:cL0}
        \begin{aligned}
            \cL_0 &= f(\bar{x}_0) - f^* + \frac{3\alpha L^2}{n(1-\trw)}\cR_0^x + \frac{12\alpha^3c_0\trw L^2}{n(1-\trw)^3}\cR_0^y\\
            &\quad + \frac{48\alpha^3c_0^2\trw L^2}{n(1-\trw)^3}\norm{\z_{-1} - \nabla F(\Barx{-1})}^2\\
            &\leq \brk{1 + \frac{6\alpha^3c_0^2L^2C(1-\beta)^2}{(1-\trw)^2} + \frac{24\alpha^3c_0^2L^2C(1-\beta)^2}{(1-\trw)^3}}\Delta_0\\
            &\quad + \brk{c_0 + \frac{2\alpha^2c_0(1-\beta)^2L(L+C)}{1-\trw}+ \frac{8\alpha^2c_0^2L(L+C)(1-\beta)^2}{(1-\trw)^2} }\frac{3\alpha L^2\norm{\Pi\x_0}^2}{n(1-\trw)}\\
            &\quad  + \frac{27\alpha^3c_0^2 L^2(1-\beta)^2(2C\sigfn + \sigma^2)}{(1-\trw)^3}+ \frac{72\alpha^3c_0^2 L^2\sumn\norm{\nabla f_i(\bar{x}_0)}^2}{n(1-\trw)^3}\\
            &\leq 2\Delta_0 + \frac{9\alpha L^2\norm{\Pi\x_0}^2}{n(1-\trw)} + \frac{27\alpha^3 c_0^2L^2(1-\beta)^2(2C\sigfn + \sigma^2)}{(1-\trw)^2}\\
            &\quad + \frac{72\alpha^3c_0^2 L^2\sumn\norm{\nabla f_i(\bar{x}_0)}^2}{n(1-\trw)^3},
        \end{aligned}
    \end{equation}
    { by noting that}
    \begin{equation}
        \label{eq:g0}
        \begin{aligned}
            &\E\brk{\norm{\Pi\y_0}^2} \leq (1-\beta)^2\E\brk{\norm{\g_0}^2}\leq (1-\beta)^2\crk{2nC\brk{f(\bar{x}_0) - f^*} + 2nC\sigfn + n\sigma^2\right.\\
            &\left.\quad +2L(L+C)\norm{\Pi\x_0}^2 + 2\sumn\norm{\nabla f_i(\bar{x}_0)}^2},\\
            &\norm{\z_{-1} - \nabla F(\Barx{-1})}^2 = \norm{\nabla F(\Barx{0})}^2= \sumn\norm{\nabla f_i(\bar{x}_0)}^2,
        \end{aligned}
    \end{equation}
    { and the definition of $\cR_0^x$ and $\cR_0^y$ in \eqref{eq:cR0}.}

    \section{Proofs for the PL Condition Case}

    \subsection{Proof of Lemma \ref{lem:contract_dk}}
    \label{app:contract_dk}
 
    Similar to the derivation of \eqref{eq:de}, we have
        \begin{align}
            &\condE{f(\bar{d}_{k + 1}) - f^*}{\cF_k} \leq f(\bar{d}_k) - f^* - \alpha\inpro{\nabla f(\bar{d}_k), \frac{1}{n}\sumn\nabla f_i(x_{i,k})}\nonumber\\
            &\quad + \frac{L}{2}\condE{\norm{\alpha \bar{g}_k}^2}{\cF_k}\nonumber\\
            &\leq f(\bar{d}_k) - f^* - \frac{\alpha}{2}\norm{\nabla f(\bar{d}_k)}^2 - \frac{\alpha(1-\alpha L)}{2}\norm{\frac{1}{n}\sumn\nabla f_i(x_{i,k})}^2\nonumber\\
            &\quad + \frac{\alpha}{2}\norm{\nabla f(\bar{d}_k) - \frac{1}{n}\sumn\nabla f_i(x_{i,k})}^2+ \frac{\alpha^2 L}{2} \condE{\norm{\bar{g}_k - \frac{1}{n}\sumn\nabla f_i(x_{i,k})}^2}{\cF_k}\nonumber\\
            &\leq \prt{1 - \alpha\mu } \brk{f(\bar{d}_k) - f^*}  + \frac{\alpha L^2}{2n}\sumn\norm{\bar{d}_k - x_{i,k}}^2 + \frac{\alpha^2 L(2C\sigfn + \sigma^2)}{n}\nonumber\\
            &\quad + \frac{\alpha^2 LC}{n}\crk{\prt{1 + \frac{\alpha\beta L}{(1-\beta)}}\brk{f(\bar{d}_k) - f^*}  + \frac{\alpha\beta (1 + \alpha\beta L/(1-\beta))}{2(1-\beta)}\norm{\bar{z}_{k-1}}^2}\nonumber\\
            &\quad + \frac{\alpha^2 L^2C}{2n^2}\norm{\Pi\x_k}^2, \label{eq:de_pl}
        \end{align}
         where we invoke $L$-smoothness of $f_i$, Assumptions \ref{as:abc} and \ref{as:PL}, and \eqref{eq:fx_fd} for the last inequality.
         In the light of the relation $\bar{d}_k - \bar{x}_k = -\alpha\beta\bar{z}_{k-1}/(1-\beta)$, we have 
         \begin{equation}
             \label{eq:dxik}
             \begin{aligned}
                 \frac{1}{n}\sumn\norm{\bar{d}_k - x_{i,k}}^2&= \frac{\alpha^2\beta^2}{(1-\beta)^2}\norm{\bar{z}_{k-1}}^2 + \frac{1}{n}\norm{\Pi\x_k}^2.
             \end{aligned}
         \end{equation}
     Combining \eqref{eq:de_pl}-\eqref{eq:dxik}, letting $\alpha\leq (1-\beta)/L$ and taking the full expectation lead to 
         \begin{align*}
             \E\brk{f(\bar{d}_{k + 1}) - f^*} &\leq \prt{1 - \alpha\mu + \frac{2\alpha^2 L C}{n}}\E\brk{f(\bar{d}_k) - f^*} + \frac{\alpha L^2}{n}\E\brk{\norm{\Pi\x_k}^2}\\
             &\quad + \frac{\alpha^2 L(2C\sigfn + \sigma^2)}{n}+ \frac{\alpha^3 \beta L\brk{C(1-\beta) + L\beta}}{(1-\beta)^2}\E\brk{\norm{\bar{z}_{k-1}}^2}.
         \end{align*}
 Letting $\alpha \leq \mu/(4LC)$ yields the desired result.
 
     \subsection{Proof of Lemma \ref{lem:cL_pl}}
     \label{app:cL_pl}
 
     Similar to the derivation of \eqref{eq:Lya_s1}, we determine the new recursion of $\cL_k$ in light of Assumption \ref{as:PL} as follows:
    \begin{align}
        &\cL_{k+1} \leq \brk{1 - \frac{\alpha\mu}{2} +  \frac{240\alpha^3 c_0^2\trw CL^2}{1-\trw} + \frac{8\alpha^3 CL(L+2C)}{n(1-\beta)}} \E\brk{f(\bar{d}_k) - f^*}\nonumber\\
        &\quad + \brk{\frac{\alpha L^2}{n}+ \frac{3\alpha L^2\trw}{n(1-\trw)} + \frac{180\alpha^3 c_0^2\trw L^3(L+C)}{n(1-\trw)} + \frac{8\alpha^3 L^2(L+2C)(L+C)}{n(1-\beta)^2}\right.\nonumber\\
        &\left.\quad + \frac{96\alpha^3c_0^2\trw(L+C)L^3}{n(1-\trw)^2}}\cR_k^x + \brk{\frac{3\alpha^3c_0 L^2\trw}{n(1-\trw)^2} + \frac{12(1+\trw)\alpha^3c_0\trw L^2}{2n(1-\trw)^3}} \cR_k^y\nonumber\\
        &\quad + \brk{\frac{\alpha^3 \beta L(L+2C)}{(1-\beta)^2} + \frac{108\alpha^4 L^2 (C+L)c_0^2\trw}{(1-\trw)^2} + \frac{4(1+\beta)\alpha^3 L(L+2C)}{2(1-\beta)^3}\right.\nonumber\\
        &\left.\quad + \frac{96\alpha^4c_0^2\trw L^2(C+L)}{(1-\trw)^3}}\E\brk{\norm{\bar{z}_{k-1}}^2}\nonumber\\
        &\quad + \brk{\frac{36\alpha^3c_0^2 \trw \beta L^2(1-\beta)^2}{n(1-\trw)^4} + \frac{48\alpha^3c_0^2\trw L^2\beta}{n(1-\trw)^3}}\E\brk{\norm{\z_{k-1} - \nabla F(\Barx{k-1})}^2}\nonumber\\
        &\quad + \frac{48\alpha^3 L(L+C)}{(1-\beta)^2}\crk{4L\E\brk{f(\bar{d}_k) - f^*} + \frac{2\alpha^2\beta^2L^2}{(1-\beta)^2}\E\brk{\norm{\bar{z}_{k-1}}^2} }\nonumber\\
        &\quad + \brk{\frac{L}{n} + \frac{72\alpha c_0^2\trw L^2(1-\beta)^2}{(1-\trw)^3} + \frac{4\alpha L(L+2C)}{n(1-\beta)}}\alpha^2(2C\sigfn + \sigma^2), \label{eq:Lya2_s1}
    \end{align}
         where we consider $\alpha\leq 1/(28c_0^2 L)$, $\beta\geq \trw$, and the estimate in \eqref{eq:nx_fx}. The estimate \eqref{eq:nx_fx} holds due to \eqref{eq:db_xb}  
         \begin{equation}
             \label{eq:nx_fx}
             \begin{aligned}
                 \norm{\nabla f(\bar{x}_k)}^2&\leq 2\norm{\nabla f(\bar{d}_k)}^2 + \frac{2\alpha^2\beta^2L^2}{(1-\beta)^2}\norm{\bar{z}_{k-1}}^2\\
                 &\leq 4L\brk{f(\bar{d}_k) - f^*} + \frac{2\alpha^2\beta^2L^2}{(1-\beta)^2}\norm{\bar{z}_{k-1}}^2.
             \end{aligned}
         \end{equation}
 
         To derive the result in \eqref{eq:cL_pl}, it is then sufficient to determine $\alpha$ and $\beta$ such that 
 
         \begin{subequations}
             \begin{align}
                 &\frac{240\alpha^3 c_0^2\trw CL^2}{1-\trw} + \frac{8\alpha^3 CL(L+2C)}{n(1-\beta)} + \frac{192\alpha^3 L^2(L+C)}{(1-\beta)^2}\leq \frac{\alpha\mu}{4}, \label{eq:a1}\\
                 &12\alpha^2c_0^2\trw^3 LC + \frac{8\alpha^2(L+2C)(L+C)(1-\trw)}{3(1-\beta)^2} + \frac{32\alpha^2 c_0^2\trw(L+C)L}{1-\trw} \nonumber\\
                 &\leq \frac{2(1-\trw)}{3} - \frac{\alpha\mu}{4}, \label{eq:a2}\\
                 &0\leq \frac{1-\trw}{4} - \frac{\alpha\mu}{4}, \label{eq:a3}\\
                 & \frac{27\alpha L(C+L) c_0^2\trw (1-\beta)}{L+2C} + \frac{24\alpha c_0^2\trw L(C+L)}{L+2C}+ \frac{24\alpha^2(L+C)L^2\beta^2}{(L+2C)(1-\beta)} \nonumber\\
                 & \leq \frac{1-\beta}{4} - \frac{\alpha\mu}{4}, \label{eq:a4}\\
                 &\frac{3(1-\beta)^2}{4(1-\trw)} \leq 1 - \beta - \frac{\alpha\mu}{4}\label{eq:a5}.
             \end{align}
         \end{subequations}
 To ensure that~\eqref{eq:a1} holds, it suffices to have 
     \begin{align*}
         \frac{240\alpha^3c_0^2L(L+C)^2}{(1-\beta)^2}\leq \frac{\alpha \mu}{4},\; \beta\geq 1- \frac{11}{12}(1-\trw).
     \end{align*}
     Then, we let $\alpha\leq \sqrt{\mu(1-\beta)^2/[960c_0^2 L(L+C)^2]}$.
 
 
     For \eqref{eq:a2} to hold, it suffices to let the stepsize $\alpha$ satisfy
     \begin{align*}
    \alpha\leq\min\crk{\frac{4(1-\trw)}{3\mu}, \sqrt{\frac{1-\trw}{108 c_0^2LC}}, \sqrt{\frac{(1-\trw)^2}{288c_0L(L+C)}}, \sqrt{\frac{(1-\beta)^2}{48(L+C)^2}}}.
     \end{align*}
     For \eqref{eq:a3}, it is sufficient to let $\alpha\leq (1-\trw)/\mu$. For \eqref{eq:a4}, it suffices to have 
     \begin{align*}
         \alpha\leq\min\crk{\frac{1-\beta}{3\mu}, \frac{1}{486c_0^2 L}, \frac{1-\beta}{432c_0L}, \sqrt{\frac{(1-\beta)^2}{432 L^2}}}.
     \end{align*}
Finally, we choose $\beta \geq (1 + 11\trw)/12$ such that \eqref{eq:a5} holds and obtain the desired result.
 
     \subsection{Proof of Theorem \ref{thm:pl}}
     \label{app:pl}
 
         We first derive an upper bound for $\E\brki{f(\bar{d}_k) - f^*}$. Unrolling the recursion \eqref{eq:cL_pl} in Lemma \ref{lem:cL_pl} and rearranging the terms yield 
         \begin{equation}
             \label{eq:fdbar_bound}
             \begin{aligned}
                 &\E\brk{f(\bar{d}_k) - f^*}\leq \cL_{k} \\
                 &\leq \prt{1 - \frac{\alpha\mu}{4}}^k\cL_0 +\brk{\frac{L}{n} + \frac{72\alpha c_0^2\trw L^2(1-\beta)^2}{(1-\trw)^3} + \frac{4\alpha L(L+2C)}{n(1-\beta)}}\frac{4\alpha(2C\sigfn + \sigma^2)}{\mu}\\
                 &\leq \prt{1 - \frac{\alpha\mu}{4}}^k\cL_0 + \frac{4\alpha L(2C\sigfn + \sigma^2)}{n\mu} + \frac{16\alpha^2 L^2\cS_1 (2C\sigfn + \sigma^2)}{(1-\trw)\mu},
             \end{aligned}
         \end{equation}
         where 
         \begin{align*}
             \cS_1:= \frac{18c_0^2(1-\beta)^2}{(1-\trw)^2} + \frac{(L+2C)(1-\trw)}{nL (1-\beta)}.
         \end{align*}
         We next construct a new Lyapunov function $\cH_k$ in \eqref{eq:hk_can} such that the consensus error $\E\brki{\normi{\Pi\x_k}^2}$ can be bounded.
         \begin{equation}
             \label{eq:hk_can}
             \cH_k:= \cR_k^x + \alpha^2 \cC_5 \cR_k^y + \alpha^3 \cC_6 \E\brk{\norm{\bar{z}_{k-1}}^2} + \alpha^2 \cC_7\E\brk{\norm{\z_{k-1} -\nabla F(\Barx{k-1})}^2}.
         \end{equation}
Combining the results of Lemmas \ref{lem:zk}-\ref{lem:cons_R}, invoking \eqref{eq:nx_fx}, and rearranging the terms yield

            \begin{align}
                &\cH_{k + 1} \leq \brk{\trw + \frac{15\alpha^2\cC_5(1-\beta)^2c_0L(C+L)}{1-\trw} + \frac{2\alpha^3(1-\beta)L(L+C)\cC_6}{n}\right.\nonumber\\
                &\left.\quad + 2\alpha^2\cC_7(1-\beta)L(L+C)}\cR_k^x + \brk{\frac{\alpha^2c_0\trw}{1-\trw} + \frac{\alpha^2\cC_5(1+\trw)}{2}}\cR_k^y\nonumber\\
                &\quad + \brk{9\alpha^3\cC_5 nc_0(1-\beta)(C+L) + \frac{(1+\beta)\alpha^3\cC_6}{2} + 2\alpha^3\cC_7n(C+L)}\E\brk{\norm{\bar{z}_{k-1}}^2}\nonumber\\
                &\quad + \brk{\frac{3\alpha^2\cC_5(1-\beta)^2c_0\beta}{1-\trw} + \beta\alpha^2\cC_7}\E\brk{\norm{\z_{k-1} - \nabla F(\Barx{k-1})}^2}\nonumber\\
                &\quad + \brk{12\alpha^2\cC_5nc_0(1-\beta)^2C + \frac{2\alpha^3\cC_6 C(1-\beta)^2}{n} + 2\alpha^2\cC_7n(1-\beta)^2C}\cL_k\nonumber\\
                &\quad + \brk{20\alpha^3\cC_5nc_0(1-\beta)(C+L) + 3\alpha^3\cC_6(1-\beta) + 2\alpha^3\cC_7n(1-\beta)C}\nonumber\\
                &\quad \cdot \crk{4L\cL_k + \frac{2\alpha^2\beta^2 L^2}{(1-\beta)^2}\E\brk{\norm{\bar{z}_{k-1}}^2}}\nonumber\\
                &\quad + \brk{6\cC_5nc_0(1-\beta)^2 + \frac{\alpha \cC_6(1-\beta)^2}{n} + \cC_7 n(1-\beta)^2}\alpha^2(2C\sigfn + \sigma^2). \label{eq:hk_s1}
            \end{align}

         Letting $\alpha\leq (1-\beta)/(5L)$, it is sufficient to determine $\cC_5$-$\cC_7$, and $\beta$ such that 
         \begin{subequations}
             \begin{align}
                 & \trw + \frac{15\alpha^2\cC_5(1-\beta)^2c_0L(C+L)}{1-\trw} + \frac{2\alpha^3(1-\beta)L(L+C)\cC_6}{n}\nonumber\\
                 &\quad + 2\alpha^2\cC_7(1-\beta)L(L+C) \leq \frac{5+\beta}{6},\label{eq:abeta}\\
                 & \frac{\alpha^2c_0\trw}{1-\trw} + \frac{\alpha^2\cC_5(1+\trw)}{2} \leq \frac{5 + \beta}{6}\alpha^2\cC_5,\label{eq:c5_can}\\
                 & 9\alpha^3\cC_5 nc_0(1-\beta)(C+L) + \frac{(1+\beta)\alpha^3\cC_6}{2} + 2\alpha^3\cC_7n(C+L)\leq \frac{5 + \beta}{6}\alpha^3\cC_6, \label{eq:c6_can}\\
                 & \frac{3\alpha^2\cC_5(1-\beta)^2c_0\beta}{1-\trw} + \alpha^2\cC_7\beta \leq \frac{5 + \beta}{6}\alpha^2\cC_7\label{eq:c7_can}.
             \end{align}
         \end{subequations}
 
         For \eqref{eq:c7_can}, we can choose $\cC_7 = 18c_0\cC_5$. For \eqref{eq:c5_can} it suffices  that  
         \begin{align*}
             \frac{c_0\trw}{1-\trw}\leq \prt{\frac{5+\beta}{6} - \frac{1+\trw}{2}}\cC_5.
         \end{align*}
 
         We can then choose 
         \begin{align*}
             \cC_5:= \frac{3c_0\trw}{(1-\trw)^2},\; \cC_7:= \frac{54c_0^2\trw}{(1-\trw)^2},\; \beta\geq \trw.
         \end{align*}
 
         Substituting $\cC_5$ and $\cC_7$ into \eqref{eq:c6_can} yields
 
         \begin{equation}
             \label{eq:c6_can1}
             \frac{27c_0^2\trw n C(1-\beta)}{(1-\trw)^2} + \frac{108c_0^2 \trw n (C+L)}{(1-\trw)^2}\leq \frac{(1-\beta)\cC_6}{3}.
         \end{equation}
 
         Therefore, we can choose 
         \begin{equation}
             \label{eq:c6}
             \cC_6:= \frac{405nc_0^2(C+L)}{(1-\trw)^3},\; \beta\geq \trw.
         \end{equation}
 
         Substituting $\cC_5-\cC_7$ into \eqref{eq:abeta} and noting $\beta\geq \trw$ lead to 
         \begin{equation}
             \label{eq:abeta_s1}
             \begin{aligned}
                &\frac{45\alpha^2(1-\beta)^2c_0^2L(C+L)\trw}{(1-\trw)^3} + \frac{910c_0^2\alpha^3(1-\beta)L(L+C)^2}{(1-\trw)^3}\\
                &\quad + \frac{108c_0^2\trw\alpha^2(1-\beta)L(L+C)}{(1-\trw)^2} \leq \frac{5(1-\trw)}{6}.
             \end{aligned}
         \end{equation}
 
         It suffices to let $\alpha\leq (1-\trw)/[160c_0(L+C)]$ for \eqref{eq:abeta_s1} to hold. Then, the inequality \eqref{eq:hk_s1} becomes 
         \begin{equation}
             \begin{aligned}
                 \label{eq:hk_re}
                 &\cH_{k + 1} \leq 
                 \frac{5 + \beta}{6}\cH_k + \frac{72nc_0^2\alpha^2(1-\beta)^2(2C\sigfn + \sigma^2)}{(1-\trw)^2}\brk{1+ \frac{45\alpha(C+L)}{8(1-\trw)}}\\
                 & + \brk{\frac{144\alpha^2nc_0^2(1-\beta)^2(C+L)(1 + 5\alpha L/(1-\beta))}{(1-\trw)^2} + \frac{6370\alpha^3 n c_0^2(1-\beta)C(C+L)}{(1-\trw)^3}}\cL_k\\
                 &\leq \frac{5 + \beta}{6}\cH_k + \frac{168\alpha^2(1-\beta)^2nc_0^2(2C\sigfn + \sigma^2)}{(1-\trw)^2} + \frac{288\alpha^2c_0^2(1-\beta)^2n(C + L)\cL_k}{(1-\trw)^2},
             \end{aligned}
         \end{equation}
         where the last inequality follows from $\alpha\leq (1-\beta)/[234(C+L)]$.
 
         Unrolling \eqref{eq:hk_re} and substituting \eqref{eq:fdbar_bound} yield
 
         \begin{equation}
             \label{eq:hk}
             \begin{aligned}
                 \cH_k &\leq \prt{\frac{5+\beta}{6}}^k\cH_0 + \frac{1008\alpha^2(1-\beta)nc_0^2(2C\sigfn + \sigma^2)}{(1-\trw)^2}\\
                 &\quad + \frac{288\alpha^2c_0^2(1-\beta)^2n(C + L)\cL_0}{(1-\trw)^2}\prt{1 - \frac{\alpha\mu}{4}}^k\sum_{j=0}^k\prt{\frac{1 - \frac{1-\beta}{6}}{1-\frac{\alpha\mu}{4}}}^j\\
                 &\quad + \frac{1728\alpha^2(1-\beta)c_0^2n(C + L)}{(1-\trw)^2}\crk{\frac{4\alpha L(2C\sigfn + \sigma^2)}{n\mu} + \frac{16\alpha^2 L^2\cS_1 (2C\sigfn + \sigma^2)}{(1-\trw)\mu}}\\
                 &\leq \prt{\frac{5 + \beta}{6}}^{k}\cH_0 + \frac{25 n\cL_0}{L+C}\prt{1 - \frac{\alpha\mu}{4}}^k + \frac{1008\alpha^2(1 + \cS_2)(1-\beta)nc_0^2(2C\sigfn + \sigma^2)}{(1-\trw)^2},
             \end{aligned}
         \end{equation}
         where the following relations are noted: 
         \begin{equation}
             \label{eq:rsum}
             \begin{aligned}
                 &\sum_{j=0}^{k}\prt{\frac{1-\frac{1-\beta}{6}}{1-\frac{\alpha\mu}{4}}}^j\leq \sum_{j=0}^{k}\prt{\frac{10+2\beta}{11+\beta}}^j\leq \frac{12}{1-\beta},\; \cS_2:= \frac{L}{n\mu} + \frac{\cS_1 L}{\mu},\\
                 &\alpha\leq\min\crk{\frac{1-\beta}{4L}, \frac{1}{12(C+L)}}.
             \end{aligned}
         \end{equation}

         Substituting \eqref{eq:fdbar_bound} and \eqref{eq:hk} into the result of Lemma \ref{lem:zk} and invoking \eqref{eq:nx_fx} yield that, for $\alpha\leq (1-\beta)/(2L\sqrt{3})$, there holds
         \begin{equation}
             \label{eq:zkbar_bound}
             \begin{aligned}
                 &\E\brk{\norm{\bar{z}_k}^2}
                 \leq \frac{3+\beta}{4}\E\brk{\norm{\bar{z}_{k-1}}^2} + \frac{(1-\beta)^2(2C\sigfn + \sigma^2)}{n}\\
                 &\quad +2(1-\beta)(C+6L)\crk{\prt{1 - \frac{\alpha\mu}{4}}^k\cL_0 + \frac{4\alpha L(2C\sigfn + \sigma^2)}{n\mu}\right.\\
                 &\left.\quad + \frac{16\alpha^2 L^2\cS_1 (2C\sigfn + \sigma^2)}{(1-\trw)\mu}}+ \frac{2(1-\beta)L(L+C)}{n}\crk{\prt{\frac{5 + \beta}{6}}^{k}\cH_0\right.\\ 
                 &\left.\quad + \frac{25 n\cL_0}{L}\prt{1 - \frac{\alpha\mu}{4}}^k + \frac{1008\alpha^2(1 + \cS_2)(1-\beta)nc_0^2(2C\sigfn + \sigma^2)}{(1-\trw)^2}}\\ 
                 &\leq \frac{24L(L+C)\cH_0}{n}\prt{\frac{5+\beta}{6}}^k + 24\cL_0(C+L)\prt{1-\frac{\alpha\mu}{4}}^k + \frac{4(1-\beta)(2C\sigfn + \sigma^2)}{n}\\
                 &\quad + \brk{\frac{6L+C}{n\mu} + \frac{4\alpha L\cS_1(C+6L)}{(1-\trw)\mu} + \frac{252\alpha(1+\cS_2)(1-\beta)(L+C)}{(1-\trw)^2}}32\alpha L(2C\sigfn + \sigma^2).
             \end{aligned}
         \end{equation}      
         From Lemma \ref{lem:fx_fd}, we have 
         \begin{align*}
             \prt{1 - \frac{2\alpha\beta L}{1-\beta}}\E\brk{f(\bar{x}_k) - f^*} &\leq \E\brk{f(\bar{d}_k) - f^*} + \frac{2\alpha\beta}{1-\beta}\E\brk{\norm{\bar{z}_{k-1}}^2}.
         \end{align*}
 Therefore, we have for $\alpha \leq (1-\beta)/(4\beta L)$ that 
         \begin{equation}
             \label{eq:pl_goal}
             \begin{aligned}
                 \frac{1}{n}\sumn\E\brk{f(x_{i,k}) - f^*} &\leq 2\E\brk{f(\bar{x}_k) - f^*} + \frac{L}{n}\E\brk{\norm{\Pi\x_k}^2}\\
                 &\leq 4\cL_k + \frac{4\alpha\beta}{1-\beta}\E\brk{\norm{\bar{z}_{k-1}}^2} + \frac{L}{n}\cH_k.
             \end{aligned}
         \end{equation}
 Substituting \eqref{eq:fdbar_bound}, \eqref{eq:hk}, and \eqref{eq:zkbar_bound} into \eqref{eq:pl_goal} and setting
         \begin{align*}
             \alpha\leq \min\crk{\frac{1-\beta}{32(C+L)}, \frac{1-\beta}{4(C+6L)}}
         \end{align*}
         yields the desired result.

\end{appendices}


\bibliography{references_all}

\end{document}